\documentclass[a4paper,11pt]{article}

% !BIB TS-program = biber

\usepackage{xcolor}
\usepackage{lmodern}
\usepackage[T1]{fontenc}
\usepackage{hyperref}
\usepackage{amsfonts,amsmath,amssymb,amsopn,amsthm}
\usepackage[all]{xy}
\usepackage{vmargin}
\usepackage{makeidx}
\usepackage{graphicx}
\usepackage{tikz}
\usepackage{comment}
\usepackage{mathabx}
\title{Twisted differential operators and $q$-crystals}
\author{Michel Gros, Bernard Le Stum \& Adolfo Quir\'os\thanks{Supported by grant PGC2018-095392-B-I00 (MCIU/AEI/FEDER, UE).}
}
\date{Version of \today}

 \newcommand{\Addresses}{{% additional braces for segregating \footnotesize
 \bigskip
 \footnotesize

Michel Gros, \textsc{IRMAR, Université de Rennes,
Campus de Beaulieu, 35042 Rennes cedex, France}\par\nopagebreak
\texttt{michel.gros@univ-rennes1.fr}

\medskip

Bernard Le Stum, \textsc{IRMAR, Université de Rennes,
Campus de Beaulieu, 35042 Rennes cedex, France}\par\nopagebreak
\texttt{bernard.le-stum@univ-rennes1.fr}

\medskip

Adolfo Quir\'os, \textsc{Departamento de Mat\'ematicas, Facultad de Ciencias, Universidad Aut\'onoma de Madrid, E-28049 Madrid, Spain}\par\nopagebreak
\texttt{adolfo.quiros@uam.es}

}}

\usepackage[ style=alphabetic,citestyle=alphabetic,sorting=nyt,sortcites=true,autopunct=true,babel=hyphen,hyperref=true,abbreviate=false,backref=true]{biblatex}
\AtEveryBibitem{ \clearfield{url} \clearfield{urldate} \clearfield{review} \clearfield{series} \clearfield{doi} \clearfield{isbn} \clearfield{issn} } 
\defbibheading{bibempty}{}
\addbibresource{../BiblioBLS.bib}
\newtheorem{thm}{Theorem}[section]
\newtheorem{prop}[thm] {Proposition}
\newtheorem{cor}[thm] {Corollary}
\newtheorem{lem}[thm] {Lemma}
\theoremstyle{definition}
\newtheorem{dfn}[thm] {Definition}

\newenvironment{xmp}[1][Example.]{\begin{trivlist} \item[\hskip \labelsep {\bfseries #1}]}{\end{trivlist}}
\newenvironment{xmps}[1][Examples.]{\begin{trivlist} \item[\hskip \labelsep {\bfseries #1}]}{\end{trivlist}}

\newenvironment{rmk}[1][Remark.]{\begin{trivlist} \item[\hskip \labelsep {\bfseries #1}]}{\end{trivlist}}
\newenvironment{rmks}[1][Remarks.]{\begin{trivlist} \item[\hskip \labelsep {\bfseries #1}]}{\end{trivlist}}
\setcounter{tocdepth}{1}
\parindent = 0 pt
\parskip = 6 pt
 \pretolerance=500 \tolerance=1000 \brokenpenalty=5000
%

%line numbering
%\usepackage[pagewise]{lineno}
%\linenumbers

%%%%%%%%%%%%%%%
%%%%%%%%%%%%
%%%%%%%%%%%%%
%%%%%%%%%%%%%
\begin{document}

\maketitle

\begin{abstract}
We discuss the notion of a $q$-PD-envelope considered by Bhatt and Scholze in their recent theory of $q$-crystalline cohomology and explain the relation with our notion of a divided polynomial twisted algebra.
Together with an interpretation of crystals on the $q$-crystalline site, that we call $q$-crystals, as modules endowed with some kind of stratification, it allows us to associate a module on the ring of twisted differential operators to any $q$-crystal.
For simplicity, we explain here only the one dimensional case.
\end{abstract}

\tableofcontents

%%%%%%%%%%%%%%%%%%
%%%%%%%%%%%%%%%%%%
\section*{Introduction}
\addcontentsline{toc}{section}{Introduction}

In their recent article \cite{BhattScholze22}, Bhargav Bhatt and Peter Scholze have introduced two new cohomological theories with a strong crystalline flavor, the prismatic and the $q$-crystalline cohomologies, allowing them to generalize some of their former results obtained with Matthew Morrow on $p$-adic integral cohomology. 
 As explained in (\cite{Gros20}, section 6), once a theory of coefficients for these new cohomologies is developed, these tools could also be a way for us to get rid of some non-canonical choices in our construction of the twisted Simpson correspondence (\cite{GrosLeStumQuiros19}, corollary 8.9).
At the end, this correspondence should hopefully appear just as an \emph{avatar} of a deeper canonical equivalence of crystals on the prismatic and the $q$-crystalline sites, whose general pattern should look like a ``$q$-deformation'' of Hidetoshi Oyama's reinterpretation (\cite{Oyama17}, theorem 1.4.3) of Ogus-Vologodsky's correspondence as an equivalence between categories of crystals (see \cite{Gros20}, section 6, for a general overview).
 
In this note, we start elaborating on this project by showing (theorem \ref{mainres}) how to construct a functor from the category of crystals on Bhatt-Scholze's $q$-crystalline site (that we call for short \emph{$q$-crystals}) to the category of modules over the ring $D_q$ of twisted differential operators of \cite{GrosLeStumQuiros19}, section 5.
The construction of this functor has also the independent interest of describing explicitly, at least locally, the kind of structure hiding behind a $q$-crystal.
Indeed, the ring $D_q = D_{A/R,q}$ is defined only for a very special class of algebras $A$ over a base ring $R$ and some element $q \in R$ (let's call them in this introduction, forgetting additional data, simply {\emph{twisted $R$-algebras}; see \cite{LeStumQuiros18b} or \cite{GrosLeStumQuiros19} for a precise definition) and it is obviously only for them that the functor will be constructed.
The construction consists then of two main steps.
The first one, certainly the less standard, is to describe concretely the \emph{$q$-PD-envelope} introduced a bit abstractly in \cite{BhattScholze22}, lemma 16.10, of the algebra of polynomials in one indeterminate $\xi$ over these twisted $R$-algebras and to relate it to the algebra appearing in the construction of $D_q$.
The second step is to develop the $q$-analogs of the usual calculus underlying the theory of classical crystals: hyperstratification, connection\ldots for the $q$-crystalline site associated to a twisted $R$-algebra.

Let us now describe briefly the organization of this article.
In section 1, we recall for the reader's convenience some basic vocabulary and properties of $\delta$-rings used in \cite{BhattScholze22}.
In section 2, we essentially rephrase some of our considerations about twisted powers \cite{GrosLeStumQuiros19}, using this time $\delta$-structures instead of relative Frobenius.
Following \cite{BhattScholze22}, we introduce in section 3 the notion of $q$-PD-envelope, first ignoring for it some hypotheses of completeness in loc.\ cit, and prove (theorem \ref{algun}) that our twisted divided power algebra in the indeterminate $\xi$ coincides exactly with this a priori completely different object.
Section 4 then addresses the general question of completion of $q$-PD-envelopes.
In section 5 we start the elaboration of twisted calculus and develop the language of hyper-$q$-stratifications (definition \ref{hyperqpd}), that is to say, the twisted variant of hyperstratifications, where new phenomena like non-trivial flip maps (proposition \ref{flippf}) appear.
Once we have obtained all these materials, we explain in section 6 how they can be reinterpreted in the language of $D_{q}$-modules and we establish (propositions \ref{taylD} and \ref{bigequi}) the twisted version of the classical equivalences of categories of usual calculus.
We have then all the ingredients to come back, in section 7, to the question of describing the complete $q$-PD-envelope of the diagonal embedding (theorem \ref{univpro}) and to give the construction of the functor (theorem \ref{mainres}) going from the category of $q$-crystals to that of $D_{q}$-modules in our particular geometric setting.

Everything below depends on a prime $p$, fixed once for all.
We stick to dimension one, leaving for another occasion the consideration of the higher dimensional case, which requires additional inputs.
Also, since we are mainly interested in local questions, we concentrate on the affine case.

We are extremely grateful and indebted to the referee for her/his very meticulous and constructive reading of the successive versions, pointing out in many places the need for further explanations or the existence of several issues, as well as for providing us with precious hints on how to fix them and suggestions to improve the whole presentation.

After the release of the first version of this article, two preprints on related matters have appeared and could be of great interest for the readers: one by Andre Chatzistamatiou \cite{Chatzistamatiou20}, the other by Matthew Morrow and Takeshi Tsuji \cite{MorrowTsuji20}.

The first author (M.G.) heartily thanks the organizers, Bhargav Bhatt and Martin Olsson, for their invitation to the {\emph{Simons Symposium on $p$-adic Hodge Theory}} (April 28-May 4, 2019), allowing him to follow
the progress on topics related to those treated in this note.

%%%%%%%%%%%%%%%%%%%%%%%%%%%
\section{$\delta$-structures}

In this section, we briefly review the notion of a $\delta$-ring.
We start from the Witt vectors point of view since it automatically provides all the standard properties.

As a set, the \emph{ring of ($p$-typical) Witt vectors} of length two\footnote{We use the more recent index convention for truncated Witt vectors.} on a commutative ring $A$ is $W_{1}(A) = A \times A$.
It is endowed with the unique \emph{natural} ring structure such that both the \emph{projection map}
\[
W_{1}(A) \to A, \quad (f, g) \mapsto f
\]
and the \emph{ghost map}
\[
W_{1}(A) \to A, \quad (f, g) \mapsto f^p+pg
\]
are ring homomorphisms.
A \emph{$\delta$-structure} on $A$ is a section
\[
A \to W_{1}(A), \quad f \mapsto (f, \delta(f))
\]
of the projection map in the category of rings.
This is equivalent to giving a $p$-derivation: a map $\delta : A \to A$ such that $\delta(0) = \delta(1) = 0$,
\begin{equation} \label{deltad}
\forall f,g \in A, \quad \delta(f+g) = \delta(f) + \delta(g) - \sum_{k=1}^{p-1} \frac 1p {p \choose k} f^{p-k}g^{k}
\end{equation}
and
\begin{equation} \label{deltmu}
\forall f,g \in A, \quad \delta(fg) =f^p\delta(g) + \delta(f)g^p +p\delta(f)\delta(g).
\end{equation}
Note that a $\delta$-structure is uniquely determined by the action of $\delta$ on the generators of the ring.
A \emph{$\delta$-ring} is a commutative ring endowed with a $\delta$-structure and $\delta$-rings make a category in the obvious way.
We refer the reader to Andr\'e Joyal's note \cite{Joyal85} for a short but beautiful introduction to the theory.
When $R$ is a fixed $\delta$-ring, a $\delta$-$R$-algebra is an $R$-algebra endowed with a compatible $\delta$-structure.

\begin{xmps}
\begin{enumerate}
\item There exists only one $\delta$-structure on the ring $\mathbb Z$ of rational integers which is given by
\[
\delta(n) = \frac {n-n^p}p \in \mathbb Z.
\]
\item If $f \in \mathbb Z[x]$, then there exists a unique $\delta$-structure on $\mathbb Z[x]$ such that $\delta(x) = f$.
\item There exists no $\delta$-structure at all when $p^kA = 0$, unless $A=0$ (show that $v_{p}(\delta(n)) = v_{p}(n) -1$ when $v_{p}(n)> 0$ and deduce that $v_{p}(\delta^k(p^k)) = 0$).
\item It may also happen that there exists no $\delta$-structure at all even when $A$ is $p$-torsion-free: take $p = 2$ and $A = \mathbb Z[\sqrt{-1}]$.
\end{enumerate}
\end{xmps}

A \emph{Frobenius} on a commutative ring $A$ is a (ring) morphism $\phi : A \to A$ that satisfies
\[
\forall f \in A, \quad \phi(f) \equiv f^p \mod p.
\]
If $A$ is a $\delta$-ring, then the map
\[
\phi : A \to A, \quad f \mapsto f^p + p \delta(f),
\]
obtained by composition of the section with the ghost map, is a Frobenius on $A$ and this construction is functorial.
Moreover, it is easy to verify that $\phi \circ \delta = \delta \circ \phi$.
Note that the multiplicative condition \eqref{deltmu} may then be rewritten in the asymmetric but sometimes more convenient way
\begin{equation} \label{asym}
\forall f,g \in A, \quad \delta(fg) = \delta(f) g^p + \phi(f)\delta(g) = f^p\delta(g) +\delta(f)\phi(g).
\end{equation}
Conversely, when $A$ is $p$-torsion-free, any Frobenius of $A$ comes from a unique $\delta$-structure through the formula
\[
\delta(f) = \frac {\phi(f) - f^p}p \in A.
\]

We will systematically use the fact that the category of $\delta$-rings has all limits and colimits and that they both preserve the underlying rings: actually the forgetful functor is conservative and has both an adjoint ($\delta$-envelope) and a coadjoint (Witt vectors).
We refer the reader to the article of James Borger and Ben Wieland \cite{BorgerWieland05} for a plethystic interpretation of these phenomena.

%%%%%%%%%%%%%%
\begin{dfn}
Let $R$ be a $\delta$-ring.
If $A$ is an $R$-algebra, then its \emph{$\delta$-envelope} $A^\delta$ is a $\delta$-$R$-algebra which is universal for morphisms of $R$-algebras into $\delta$-$R$-algebras.
\end{dfn}

Actually, it follows from the above discussion that the forgetful functor from $\delta$-$R$-algebras to $R$-algebras has an adjoint $A \mapsto A^\delta$ (so that $\delta$-envelopes always exist and preserve colimits).
Be careful that, even if it does not show up in the notation, the $\delta$-envelope does depend on the base $\delta$-ring $R$.
However, if $R \to S$ is a morphism of $\delta$-rings, we have

\begin{equation} \label{bcdelt}
(S \otimes_R A)^\delta \simeq S \otimes_R A^\delta.
\end{equation}
Of course, it is always possible to define the $\delta$-envelope of a \emph{ring} $A$ by considering the case $R = \mathbb Z$.

%%%%%%%%%%
\begin{xmp}
We have\footnote{For $R = \mathbb Z$, this is in fact the same as the free $\delta$-ring $\mathbb Z\{e\}$ of \cite{BhattScholze22}, lemma 2.11 or the ring of $p$-typical symmetric functions $\Lambda_{p}$ of \cite{BorgerWieland05}.} $R[x]^\delta = R[\{x_{i}\}_{i \in \mathbb N}]$ (polynomial ring with infinitely many variables) with the unique $\delta$-structure such that $\delta(x_{i}) = x_{i+1}$ for $i \in \mathbb N$.
More generally, since $\delta$-envelope preserves colimits, we have $ R[\{x_{k}\}_{k\in F}]^\delta = R[\{x_{k,i}\}_{k \in F, i \in \mathbb N}]$ with the unique $\delta$-structure such that $\delta(x_{k,i}) = x_{k,i+1}$ for $k \in F, i \in \mathbb N$.
\end{xmp}

%%%%%%%%%%%%%
\begin{dfn}
A \emph{$\delta$-ideal} in a $\delta$-ring $A$ is an ideal $I$ which is stable under $\delta$.
In general, the \emph{$\delta$-closure} $I_{\delta}$ of an ideal $I$ is the smallest $\delta$-ideal containing $I$.
\end{dfn}

Equivalently, a $\delta$-ideal is the kernel of a morphism of $\delta$-rings.
It immediately follows from formulas (\eqref{deltad}) and (\eqref{deltmu}) above that the condition for an ideal to be a $\delta$-ideal may be checked on generators.
As a consequence, if $I = \left(\left\{f_{i}\right\}_{i\in E}\right)$, then
\[
I_{\delta} = \left(\left\{\delta^j(f_{i})\right\}_{i\in E,j\in \mathbb N}\right).
\]
Also, if $R$ is a $\delta$-ring, $A$ an $R$-algebra and $I \subset A$ an ideal, then we have
\[
(A/I)^\delta = A^\delta/(IA^\delta)_{\delta}.
\]
This provides a convenient tool to describe $\delta$-envelopes by choosing a presentation:
\[
A = R[\{x_{k}\}_{k\in F}]/\left(\left\{f_{i}\right\}_{i\in E}\right) \Rightarrow A^\delta = R[\{x_{k,i}\}_{k \in F, i \in \mathbb N}]/\left(\left\{\delta^j(f_{i})\right\}_{i\in E,j\in \mathbb N}\right)
\]
(with $x_{k} \mapsto x_{k,0}$).

%%%%%%%%%%%%%%%
\begin{xmp}
We usually endow the polynomial ring $\mathbb Z[q]$ with the unique $\delta$-structure such that $\delta(q) = 0$.
Then the principal ideal generated by $q-1$ is a $\delta$-ideal:
since $q^p-1 \equiv (q-1)^{p}$ both modulo $p$ and modulo $q-1$, we can write $q^p-1 - (q-1)^{p} = p(q-1)c$ in the unique factorization domain $\mathbb Z[q]$, and then we have $\delta(q-1) = (q-1)c$.
\end{xmp}

We will need later the following property:

\begin{lem} \label{factdel}
Assume that $p$ lies in the Jacobson radical of both $A$ and $B$.
If $A \times B$ is endowed with a $\delta$-structure, then both $A$ and $B$ are $\delta$-ideals.
Equivalently, there exists a (unique) quotient $\delta$-structure on both $A$ and $B$.
\end{lem}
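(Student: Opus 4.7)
The plan is to reduce everything to the single computation of $\delta$ on the idempotent $e = (1,0) \in A \times B$. Once I show that $\delta(e) = 0$ (and similarly $\delta(1-e) = 0$), both factors will automatically be $\delta$-ideals: indeed, writing any element of the first factor as $(a,0) = (a,0) \cdot e$ and applying the asymmetric Leibniz rule \eqref{asym} yields
\[
\delta((a,0)) = \phi((a,0))\, \delta(e) + \delta((a,0))\, e^p = \delta((a,0)) \cdot e \in A \times \{0\},
\]
and symmetrically for the second factor.

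To prove $\delta(e) = 0$, I would exploit the fact that $e$ is idempotent. Applying the asymmetric multiplicative formula \eqref{asym} to $e \cdot e = e$ gives
\[
\delta(e) = e^p\, \delta(e) + \delta(e)\, \phi(e) = e\, \delta(e) + \delta(e)\, \phi(e),
\]
so that $\delta(e)\bigl(1 - e - \phi(e)\bigr) = 0$. Since $\phi(e) = e^p + p\delta(e) = e + p\delta(e)$, this becomes
\[
\delta(e)\bigl(1 - 2e - p\,\delta(e)\bigr) = 0.
\]
Writing $\delta(e) = (a,b)$ and reading the equation componentwise, the first component yields $a(-1 - pa) = 0$ in $A$ and the second yields $b(1 - pb) = 0$ in $B$. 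Now the hypothesis that $p$ lies in the Jacobson radical of $A$ (resp. $B$) is precisely what makes $-1 - pa$ (resp. $1 - pb$) a unit, forcing $a = 0$ and $b = 0$, hence $\delta(e) = 0$. The same argument applied to the complementary idempotent $1 - e = (0,1)$ gives $\delta(1-e) = 0$ (alternatively, one may note that $e^{p-k}(1-e)^k = 0$ for $1 \le k \le p-1$, so the additivity formula \eqref{deltad} yields $\delta(1-e) = -\delta(e) = 0$ directly).

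The last sentence (``Equivalently, there exists a unique quotient $\delta$-structure on both $A$ and $B$'') is just the standard reformulation: an ideal of a $\delta$-ring is a $\delta$-ideal if and only if the quotient ring carries a (necessarily unique) $\delta$-structure making the projection a morphism of $\delta$-rings, so nothing further needs to be checked.

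The only step that requires a bit of care is the manipulation yielding the identity $\delta(e)(1 - 2e - p\delta(e)) = 0$ and seeing that the Jacobson radical hypothesis is exactly what is needed to kill $\delta(e)$; the rest is bookkeeping with the Leibniz rule. I do not foresee any genuine obstacle.
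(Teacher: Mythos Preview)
Your proof is correct and follows essentially the same route as the paper: both reduce to showing $\delta(e)=0$ for the idempotent $e=(1,0)$, derive from $e^2=e$ the relation $(1+pa)a=0$ in $A$ and $(1-pb)b=0$ in $B$ (you via the asymmetric rule \eqref{asym}, the paper via the symmetric rule \eqref{deltmu}), and then invoke the Jacobson-radical hypothesis. Your explicit verification that $A\times\{0\}$ is then a $\delta$-ideal is a detail the paper leaves as ``automatic''.
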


\begin{proof}
It is enough to prove that $A$ has a unique quotient $\delta$-structure, and we only need to check that $\delta(1, 0) = (0,0)$ because everything else is automatic.
If we write $\delta(1, 0) =: (f, g)$, then we have
\[
\delta(1,0) = \delta((1,0)^2) = 2(f,g)(1,0)^p+p(f,g)^2 = (2f+pf^2,pg^2)
\]
It follows that $f = 2f+pf^2$ and $g = pg^2$ or, in other words, that $(1+pf)f = 0$ and $(1-pg)g = 0$.
Since $p$ lies in the Jacobson radical of $A$ (resp. $B$) then $1 + pf$ (resp. $1-pg$) is invertible in $A$ (resp. B) and it follows that $f=0$ (resp. $g=0$).
\end{proof}

Note that the condition will be satisfied when $A$ and $B$ are both $(p)$-adically complete.
Note also that \emph{some} condition is necessary because the result does not hold for example when $A = B = \mathbb Q$.

It is important for us to recall also that a $p$-derivation $\delta$ on a ring $A$ is systematically $I$-adically continuous with respect to any finitely generated ideal $I$ containing $p$.
In particular, $\delta$ will then automatically extend in a unique way to a $\delta$-structure on the $I$-adic completion of $A$.

Finally, we will use the convenient terminology of $f$ being a \emph{rank one element} to mean $\delta(f) = 0$ (and consequently $\phi(f) = f^p$) and $f$ being a \emph{distinguished element} to mean $\delta(f) \in A^\times$ (and therefore $p \in (f^p, \phi(f))\subset (f, \phi(f))$).

%%%%%%%%%%%%%%%%%%%%%%%%%%
\section{$\delta$-rings and twisted divided powers} \label{deltwist}

We fix a $\delta$-ring $R$ as well as a rank one element $q$ in $R$.
Alternatively, we may consider $R$ as a $\delta$-$\mathbb Z[q]$-algebra, where $q$ is then seen as a parameter with $\delta(q) = 0$, in which case we would still write $q$ instead of $q1_{R} \in R$. 
As a $\delta$-ring, $R$ is endowed with a Frobenius endomorphism $\phi$.
Note that $\phi(q) = q^p$ and the action on $q$-analogs of natural numbers\footnote{We write $(n)_{q} := \frac {q^n-1}{q-1}$.} is given by $\phi((n)_{q}) = (n)_{q^p}$.

We let $A$ be a $\delta$-$R$-algebra and we fix some rank one element $x$ in $A$.
The $R$-algebra $A$ is automatically endowed with a Frobenius $\phi$ which is semilinear (with respect to the Frobenius $\phi$ of $R$) and satisfies $\phi(x) = x^p$.
Although we could consider as well the \emph{relative} Frobenius, which is the $R$-linear map\footnote{We used to put a star and write $F_{A/R}^*$ in \cite{GrosLeStumQuiros19}.}
\[
F : A' := R {}_{{}_{\phi}\nwarrow}\!\!\otimes_{R} A \to A, \quad a {}_{{}_{\phi}\nwarrow}\!\!\otimes f \mapsto af^p + pa\delta(f),
\]
so that $\phi(f) = F(1 \otimes f)$, we will stick here to the \emph{absolute} Frobenius $\phi$ and modify our formulas from \cite{GrosLeStumQuiros19} accordingly.
Let us remark, however, that $A'$ has a natural $\delta$-structure and that $F$ is then a morphism of $\delta$-$R$-algebras.

There exists a unique structure of $\delta$-$A$-algebra on the polynomial ring $A[\xi]$ such that $x + \xi$ has rank one and we call it the \emph{symmetric} $\delta$-structure.
It is given by
\begin{equation} \label{xiform}
\delta(\xi) = \sum_{i=1}^{p-1} \frac 1p {p \choose i} x^{p-i}\xi^{i}
\end{equation}
(which depends on $x$ but not on $q$ or $\delta$).
Recall that we introduced in section 4 of \cite{LeStumQuiros15} the \emph{twisted powers}
\[
\xi^{(n)_{q}} := \xi(\xi + x - qx) \cdots (\xi + x - q^{n-1}x) \in A[\xi]
\]
(that clearly depend on both the choice of $q$ and $x$).
We will usually drop the index $q$ and simply write $\xi^{(n)}$.
They form an alternative basis for $A[\xi]$ (as an $A$-module) which is better adapted to working with $q$-analogs: if $\phi$ denotes the Frobenius of $A[\xi]$ attached to the symmetric $\delta$-structure, then we have the fundamental congruence
\begin{equation} \label{init}
\phi(\xi) \equiv \xi^{(p)} \mod (p)_{q}.
\end{equation}
This follows from corollary 7.6 of \cite{GrosLeStumQuiros19} and lemma 2.12 of \cite{LeStumQuiros15}, but may also be checked directly.
There exist more general explicit formulas for higher twisted powers:
we showed in proposition 7.5 of \cite{GrosLeStumQuiros19} that\footnote{In \cite{GrosLeStumQuiros19} we actually used $A_{n,i}$ and $B_{n,i}$ instead of $a_{n,i}$ and $b_{n,i}$.}
\[
\phi(\xi^{(n)}) = \sum_{i=n}^{pn} a_{n,i} x^{pn-i}\xi^{(i)}
\]
with
\begin{equation} \label{ani}
a_{n,i} := \sum_{j=0}^n (-1)^{n-j} q^{\frac {p(n-j)(n-j-1)}2} {n \choose j}_{q^p} {pj \choose i}_{q} \in \mathbb Z[q]
\end{equation}
(in which we use the $q$-analogs of the binomial coefficients, as in section 2 of \cite{LeStumQuiros15} for example).

In section 2 of \cite{GrosLeStumQuiros19}, we also introduced the ring of \emph{twisted divided polynomials} $A\langle\xi\rangle_{q}$, which depends on the choice\footnote{Actually, it depends on $q$ and $y := (1-q)x$.} of both $q$ and $x$ (but does not require any $\delta$-structure on $A$).
This is a commutative $A$-algebra.
As an $A$-module, it is free on some generators $\xi^{[n]_{q}}$ (called the \emph{twisted divided powers}) indexed by $n \in \mathbb N$.
The multiplication rule is quite involved:
\begin{equation} \label{mulrul}
\xi^{[n]_{q}} \xi^{[m]_{q}} = \sum_{0\leq i \leq n,m} q^{\frac{i(i-1)}2}{n + m -i \choose n}_{q}{n \choose i}_{q} (q-1)^{i}x^i\xi^{[n+m-i]_{q}}. 
\end{equation}
Again, we will usually drop the index $q$ and simply write $\xi^{[n]}$.
Heuristically\footnote{More precisely, there exists a unique natural homomorphism $A[\xi] \to A\langle\xi\rangle_{q}, \xi^{(n)} \mapsto (n)_{q}! \xi^{[n]}$.}, we have
\[
\forall n \in \mathbb N, \quad \xi^{[n]} = \frac {\xi^{(n)}}{(n)_{q}!}.
\]
In section 7 of \cite{GrosLeStumQuiros19}, we showed that there exists a \emph{divided Frobenius} map
\[
[F] : A'\langle \omega \rangle_{q^p} \to A \langle \xi \rangle_{q}
\] 
(we denote by $\omega$ the variable on the left hand side in order to avoid confusion) such that, heuristically again, 
\begin{equation} \label{frobdiv}
\forall n \in \mathbb N, \quad [F]\left(\omega^{[n]}\right) = \frac {\phi \left(\xi^{[n]}\right)} {(p)_{q}^n}.
\end{equation}
More precisely, we have the following formula
\begin{equation} \label{bcoef}
[F]\left(\omega^{[n]}\right) = \sum_{i=n}^{pn} b_{n,i} x^{pn-i}\xi^{[i]} 
\end{equation}
with
\[ 
b_{n,i} = \frac {(i)_{q}!}{(n)_{q^p}!(p)_{q}^n}a_{n,i} \in \mathbb Z[q]
\]
and $a_{n,i}$ as in \eqref{ani}.

We may then define the absolute frobenius of $A \langle \xi \rangle_{q}$ by composing the ``blowing up''
\[
A \langle \xi \rangle_{q} \to A'\langle \omega \rangle_{q^p}, \quad \xi^{[n]} \mapsto (p)_{q}^n\omega^{[n]}
\]
with the divided Frobenius $[F]$.
Note that this blowing up is a morphism of rings: this is actually a generic question that may be checked on polynomial rings.
One must also check that this is a lifting of the absolute frobenius.
We may clearly assume that $R = \mathbb Z[q]$ and $A = R[x]$ and we are reduced to showing that the map induced on $\mathbb Z[q,x]\langle \xi \rangle/p$ is the absolute frobenius.
Since this ring is $(n)_{q}$-torsion free for all $n > 0$, this boils down to the same assertion on $\mathbb F_{p}[q,x,\xi]$.
It is therefore sufficient to recall that $q$, $x$ and $x + \xi$ map respectively to $q^p$, $x^p$ and $(x+\xi)^p$.

When $A$ is $p$-torsion-free, the Frobenius of $A\langle\xi\rangle_{q}$ corresponds to a unique $\delta$-structure on $A\langle\xi\rangle_{q}$ and this provides a natural $\delta$-structure in general using the isomorphism

\begin{equation} \label{basch}
A\langle\xi\rangle_{q} \simeq A \otimes_{\mathbb Z[q,x]} \mathbb Z[q,x] \langle\xi\rangle_{q}
\end{equation}
(where $q$ and $x$ are seen as parameters).

\begin{rmk}
The rank one condition on $x$ is crucial for our results to hold.
Otherwise, the Frobenius of $A[\xi]$ will \emph{not} extend to $A\langle\xi\rangle_{q}$.
This is easily checked when $p = 2$.
Let us denote by $\delta_{c}$ the $\delta$ structure given by $\delta_{c}(x) = c \in R$ and by $\phi_{c}$ the corresponding Frobenius, so that
\[
\phi_{c}(x) = x^2 + 2c = \phi_{0}(x) + 2c \quad \mathrm{and} \quad \phi_{c}(\xi) = \xi^2 +2x\xi = \phi_{0}(\xi).
\]
Recall now from the first remark after definition 7.10 in \cite{GrosLeStumQuiros19} that
\[
\phi_{0}(\xi) = (1+q)(\xi^{[2]}+ x\xi).
\]
Then, the following computation shows that $\phi_{c}\left(\xi^{(2)}\right)$ is \emph{not} divisible by $(2)_{q^2} = 1 + q^2$ in general (so that $\phi_{c}\left(\xi^{[2]}\right)$ does \emph{not} exist in $A\langle\xi\rangle_{q}$):
\begin{align*}
\phi_{c}\left(\xi^{(2)}\right) & = \phi_{c}(\xi(\xi + (1-q)x)) \\
	& = \phi_{0}(\xi)(\phi_{0}(\xi) + (1-q^2)(\phi_{0}(x) + 2c)) \\
	& = \phi_{0}\left(\xi^{(2)}\right) + 2c(1-q^2)\phi_{0}(\xi) \\
	& = (1+q^2)\phi_{0}\left(\xi^{[2]}\right) + 2c(1-q)(1+q)^2(\xi^{[2]}+ x\xi).
\end{align*}
\end{rmk}

%%%%%%%%%%%%%%%%%%%%%%%%%%
\section{$q$-divided powers and twisted divided powers}

As before, we let $R$ be a $\delta$-ring with fixed rank one element $q$.
We consider the maximal ideal $(p, q-1) \subset \mathbb Z[q]$ and we assume from now on that $R$ is actually a $\mathbb Z[q]_{(p,q-1)}$-algebra.

Note that, under this new hypothesis, we have $(k)_{q} \in R^\times$ as long as $p \nmid k$ so that $R/(p)_{q}$ is a $q$-divisible ring of $q$-characteristic $p$ in the sense of \cite{LeStumQuiros15}, which was a necessary condition for the main results of \cite{GrosLeStumQuiros19} to hold.

We have the following congruences when $k \in \mathbb N$:
\begin{equation} \label{cong}
(p)_{q^k} \equiv p \mod q-1 \quad \mathrm{and} \quad (p)_{q^k} \equiv (k)_{q}^{p-1}(q-1)^{p-1} \mod p,
\end{equation}
which both imply that $(p)_{q^k} \in (p, q-1)$.
It also follows that $(p)_{q^k}$ is a \emph{distinguished} element in $R$ since
\[
\delta((p)_{q^k}) \equiv \delta(p) = 1 - p^{p-1}\equiv 1 \mod (p, q-1)
\]
(the first congruence follows from the fact that $(q-1)$ is a $\delta$-ideal).

We recall now the following notion from \cite{BhattScholze22}, definition 16.2: 

%%%%%%%%%%%
\begin{dfn}
\begin{enumerate}
\item
If $B$ is a $\delta$-$R$-algebra and $J \subset B$ is an ideal, then $(B, J)$ is a \emph{$\delta$-pair}.
We may also say that the surjective map $B \twoheadrightarrow \overline B := B/J$ is a \emph{$\delta$-thickening}.
\item
If, moreover, $B$ is $(p)_{q}$-torsion-free and
\begin{equation}\label{dpcond}
\forall f \in J, \quad \phi(f) -(p)_{q}\delta(f) \in (p)_{q}J,
\end{equation}
then $(B,J)$ is a \emph{$q$-PD-pair}\footnote{There are a lot more requirements, that we may ignore at this moment, in definition 16.2 of \cite{BhattScholze22}.}.
We may also say that $J$ is a \emph{$q$-PD-ideal} or has \emph{$q$-divided powers} or that the map $B \twoheadrightarrow \overline B$ is a \emph{$q$-PD-thickening}.
\end{enumerate}
\end{dfn}

%%%%%%%%%%%
\begin{rmks}
\begin{enumerate}
\item Condition \eqref{dpcond} may be split in two, as Bhatt and Scholze do in loc.\ cit.: one may first require that $\phi(J) \subset (p)_{q}B$, 
then introduce the map
\[
\gamma : J \to B, f \mapsto \frac{\phi(f)}{(p)_{q}} - \delta(f)
\]
and also require that $\gamma(J) \subset J$.
\item In the special case where $J$ is a $\delta$-ideal, condition \eqref{dpcond} simply reads $\phi(J) \subset (p)_{q}J$, and this implies that $\phi^k(J) \subset (p^k)_{q}J$ for all $k \in \mathbb N$.
\item In general, the elements $f \in J$ that satisfy the property in condition \eqref{dpcond} form an ideal and the condition can therefore be checked on generators.
\end{enumerate}
\end{rmks}

%%%%%%%%%%%%%%
\begin{xmps}
\begin{enumerate}
\item In the case $q =1$, condition \eqref{dpcond} simply reads
\[
\forall f \in J, \quad f^p \in pJ
\]
and $(B,J)$ is a $q$-PD-pair if and only if $B$ is $p$-torsion-free and $J$ has \emph{usual} divided powers (use lemma 2.35 of \cite{BhattScholze22}).
In other words, a $1$-PD-pair is a $p$-torsion-free PD-pair endowed with a lifting of Frobenius (the $\delta$-structure).
\item If $B$ is a $(p)_{q}$-torsion-free $\delta$-$R$-algebra, then the \emph{Nygaard ideal} $\mathcal N := \phi^{-1}((p)_{q}B)$ has $q$-divided powers (this is shown in \cite{BhattScholze22}, lemma 16.7).
This is the maximal $q$-PD-ideal in $B$.
Note that $\mathcal N$ is the first piece of the \emph{Nygaard filtration}.
\item If $B$ is a $(p)_{q}$-torsion-free $\delta$-$R$-algebra, then $J := (q-1)B$ has $q$-divided powers.
This is also shown in \cite{BhattScholze22}, but actually simply follows from the equalities
\[
\phi(q-1) =q^p-1 = (p)_{q}(q-1),
\]
since $(q-1)$ is a $\delta$-ideal.
\item
With the notations of the previous section, we may endow $A\langle\xi\rangle_{q}$ with the augmentation ideal $I^{[1]}$ generated by all the $\xi^{[n]}$ for $n \geq 1$.
When $A$ is $(p)_{q}$-torsion-free, this is a $q$-PD pair as the equality \eqref{frobdiv} shows (since $I^{[1]}$ is clearly a $\delta$-ideal).
\end{enumerate}
\end{xmps}

For later use, let us also mention the following:

%%%%%%%%%%%%%%%
\begin{lem} \label{formpair}
\begin{enumerate}
\item
If $(B, J)$ is a $q$-PD-pair, $B'$ is a $(p)_{q}$-torsion-free $\delta$-ring and $B \to B'$ is a morphism of $\delta$-rings, then $(B', JB')$ is a $q$-PD-pair.
\item
The category of $\delta$-pairs has all colimits and they are given by
\[
\varinjlim (B_{e}, J_{e}) = (B, J)
\]
with $B = \varinjlim B_{e}$ and $J = \sum J_{e}B$.
Colimits preserve $q$-PD-pairs as long as they are $(p)_{q}$-torsion free.
\end{enumerate}
\end{lem}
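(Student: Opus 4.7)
The plan is to deduce both parts from the third remark following the definition: the subset of $f \in J$ satisfying $\phi(f) - (p)_{q}\delta(f) \in (p)_{q}J$ forms an ideal of $B$, so condition \eqref{dpcond} only needs to be checked on a set of ideal generators. Combined with the fact that $\phi$ and $\delta$ are preserved by every morphism of $\delta$-rings, this reduces both assertions to bookkeeping.

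For (1), since $JB'$ is generated as an ideal of $B'$ by the image of $J$, it suffices to verify the condition for $f \in J$ viewed in $B'$. By hypothesis, we can write $\phi(f) - (p)_{q}\delta(f) = (p)_{q}h$ in $B$ with $h \in J$; pushing this relation along the $\delta$-morphism $B \to B'$ gives $\phi(f) - (p)_{q}\delta(f) \in (p)_{q}JB'$ inside $B'$, and the assumed $(p)_{q}$-torsion-freeness of $B'$ finishes the argument.

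For (2), I would first produce the colimit explicitly. Using the reminder that the forgetful functor from $\delta$-rings to rings preserves colimits, I set $B := \varinjlim B_{e}$ as a $\delta$-ring (computed on underlying rings) and $J := \sum_{e}J_{e}B$. The universal property is then immediate: any cocone of $\delta$-pair maps $(B_{e}, J_{e}) \to (B', J')$ factors uniquely through a $\delta$-morphism $B \to B'$ which sends each $J_{e}$ into $J'$, and therefore sends $J$ into $J'$. For the $q$-PD assertion, assume each $(B_{e}, J_{e})$ is a $q$-PD-pair and that $B$ is $(p)_{q}$-torsion-free. Since the images of the $J_{e}$ generate $J$ as an ideal, the reduction in the first paragraph reduces the task to verifying condition \eqref{dpcond} for the image $f$ in $B$ of some $f_{e} \in J_{e}$; but this follows exactly as in (1), by pushing the relation $\phi(f_{e}) - (p)_{q}\delta(f_{e}) \in (p)_{q}J_{e}$ through the $\delta$-morphism $B_{e} \to B$. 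The main obstacle—if one can call it that—is simply the bookkeeping: ensuring that the $(p)_{q}$-torsion-freeness of the colimit is used only where the definition of a $q$-PD-pair requires it, and that the elementary fact about ideal generators from the third remark is invoked in place of any ad hoc argument.
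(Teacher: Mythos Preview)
Your proof is correct and follows essentially the same approach as the paper: both assertions about the $q$-PD condition are reduced to the fact that condition \eqref{dpcond} can be checked on generators (the third remark after the definition), and the existence of colimits of $\delta$-pairs is deduced from the corresponding fact for $\delta$-rings. You have simply spelled out in detail what the paper states in two sentences.
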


\begin{proof}
Both assertions concerning $q$-PD-pairs follow from the fact that the property in condition \eqref{dpcond} may be checked on generators.
The first part of the second assertion is a consequence of the fact that the category of $\delta$-rings has all colimits and that they preserve the underlying rings.
\end{proof}

%%%%%%%%%%%%%%
\begin{rmks}
\begin{enumerate}
\item
As a consequence of the first assertion, we see that if $(B, J)$ is a $q$-PD-pair and $\mathfrak b \subset B$ is a $\delta$-ideal such that $B/\mathfrak b$ is $(p)_{q}$-torsion-free, then $(B/\mathfrak b, J + \mathfrak b/\mathfrak b)$ is also a $q$-PD-pair.
\item
As a particular case of the second assertion, we see that fibered coproducts in the category of $\delta$-pairs are given by
\[
(B_{1}, J_{1}) \otimes_{(B, J)} (B_{2}, J_{2}) = (B_{1} \otimes_{B} B_{2}, \mathrm{im}(B_{1} \otimes_{B} J_{2} + J_{1} \otimes_{B} B_{2})).
\]
Note that if $B_{1}$ is $B$-flat and $B_{2}$ is $(p)_{q}$ torsion-free, then $B_{1} \otimes_{B} B_{2}$ is automatically $(p)_{q}$-torsion-free.
\end{enumerate}
\end{rmks}

%%%%%%%%%%%
\begin{dfn}\label{q-PD-envelope}
Let $(B, J)$ be a $\delta$-pair.
Then (if it exists) its \emph{$q$-PD envelope} $\left(B^{[\ ]_{q}}, J^{[\ ]_{q}}\right)$ is a $q$-PD-pair that is universal for morphisms to $q$-PD-pairs:
there exists a morphism of $\delta$-pairs $(B, J) \to \left(B^{[\ ]_{q}}, J^{[\ ]_{q}}\right)$ such that any morphism $(B, J) \to (B', J')$ to a $q$-PD-pair extends uniquely to $\left(B^{[\ ]_{q}}, J^{[\ ]_{q}}\right)$.
\end{dfn}

%%%%%%%%%%%
\begin{rmks}
\begin{enumerate}
\item Note that the $q$-PD-envelope only depends on the underlying $\delta$-$\mathbb Z[q]$-algebra structure: we may as well take $R = B$.
\item In definition \ref{q-PD-envelope}, it might be necessary/useful for later developments to add the condition $\overline B \simeq \overline {B^{[\ ]_{q}}}$, but this is not clear yet.
\end{enumerate}
\end{rmks}

\begin{xmps}
\begin{enumerate}
\item When $q=1$, $B$ is $p$-torsion-free and $J$ is generated by a regular sequence modulo $p$, then the $q$-PD-envelope of $B$ is its \emph{usual} PD-envelope (corollary 2.39 of \cite{BhattScholze22}).
\item If $B$ is $(p)_{q}$-torsion-free and $J$ already has $q$-divided powers, then $B^{[\ ]_{q}} = B$.
\item \label{J=B} If $J= B$, then $B^{[\ ]_{q}} = B[\{1/(n)_{q}\}_{n \in \mathbb N}]$ ($\neq 0$ in general).
\end{enumerate}
\end{xmps}

As a consequence of lemma \ref{formpair}, let us mention the following:

%%%%%%%%%%%%%%%
\begin{lem} \label{paircons}
\begin{enumerate}
\item
Let $(B, J)$ be a $\delta$-pair and $\mathfrak b \subset B$ a $\delta$-ideal.
Assume that $(B, J)$ has a $q$-PD-envelope and $B^{[\ ]_{q}}/\mathfrak bB^{[\ ]_{q}}$ is $(p)_{q}$-torsion-free.
Then, 
\[
\left(B^{[\ ]_{q}}/\mathfrak bB^{[\ ]_{q}}, (J^{[\ ]_{q}}+\mathfrak bB^{[\ ]_{q}})/\mathfrak bB^{[\ ]_{q}}\right)
\]
is the $q$-PD-envelope of $(B/\mathfrak b, J+\mathfrak b/\mathfrak b)$.
\item
Let $\{(B_{e}, J_{e})\}_{e\in E}$ be a commutative diagram of $\delta$-pairs all having a $q$-PD-envelope, $B := \varinjlim B_{e}$ and $J = \sum J_{e}B$.
If $B':=\varinjlim B_{e}^{[\ ]_{q}}$ is $(p)_{q}$-torsion free, then $\left(B', \sum J_{e}^{[\ ]_{q}}B'\right)$ is the $q$-PD-envelope of $(B,J)$.
\end{enumerate}
\end{lem}

\begin{proof}
Concerning both assertions, we already know from lemma \ref{formpair} (and the remarks thereafter) that the given pair is a $q$-PD-pair and the universal property follows from the universal properties of quotient and colimit respectively.
\end{proof}

The next example is fundamental for us (we use the notations of the previous section):

%%%%%%%%%%%%%%%%%%%%%%%
\begin{thm} \label{algun}
If $A$ is a $(p)_{q}$-torsion-free $\delta$-$R$-algebra with fixed rank one element $x$ and $A[\xi]$ is endowed with the symmetric $\delta$-structure, then $(A\langle\xi\rangle_{q}, I^{[1]})$ is the $q$-PD-envelope of $(A[\xi], (\xi))$.
Moreover, we have an isomorphism of $\delta$-$R$-algebras
\[
A\langle\xi\rangle_{q} \simeq A[\xi][\phi(\xi)/(p)_q]^\delta
\]
with the $\delta$-envelope of $A[\xi][\phi(\xi)/(p)_q]$ over $A[\xi]$.
\end{thm}

\begin{proof}
We already know from \eqref{basch} that $A\langle\xi\rangle_{q}$ is stable under the base change of $A$ and it follows from \eqref{bcdelt} that the same holds for $A[\xi][\phi(\xi)/(p)_q]^\delta$.
We may therefore assume that $R = \mathbb Z[q]_{(p,q-1)}$ and $A =R[x]$ and we first construct some other basis of $A\langle\xi\rangle_{q}$ as an $A$-module that will be useful later.
We endow $A\langle\xi\rangle_{q}$ with the \emph{degree filtration} by the $A$-submodules $F_{n}$ generated by $\xi^{[k]}$ for $k \leq n$.
Formula \eqref{bcoef} shows that Frobenius sends $F_{n}$ into $F_{pn}$.
Since this is clearly also true for the $p$th power map and $A$ is $p$-torsion-free, we see that the same also holds for $\delta$ (recall that $\delta(u) = \frac 1p(\phi(u) -u^p)$ and for $\gamma$ (recall that $\gamma(u) = \frac 1{(p)_{q}}\phi(u) - \delta(u)$).
At this point, we may actually notice that, although $\delta$ is not an additive map, formula \eqref{deltad} shows that it satisfies
\begin{equation} \label{delteq}
\forall u, v \in F_{n}, \quad (u \equiv v \mod F_{n-1}) \Rightarrow (\delta (u) \equiv \delta(v) \mod F_{pn-1}),
\end{equation}
and the analogous property is also satisfied by $\gamma$ when $u,v \in I^{[1]}$.
Now, it is not difficult to compute the following leading terms
\[
\phi\left(\xi^{[n]}\right) \equiv \frac {(np)_{q}!}{(n)_{q^p}!} \xi^{[np]} \mod F_{np-1}
\]
and
\[
\left(\xi^{[n]}\right)^p \equiv \frac {(np)_{q}!}{((n)_{q}!)^p} \xi^{[np]} \mod F_{np-1}.
\]
It follows that
\begin{equation}\label{delta}
\delta\left(\xi^{[n]}\right) \equiv d_{n}\xi^{[np]} \mod F_{np-1} \quad \mathrm{with}\quad d_{n} = \frac {(((n)_{q}!)^p-(n)_{q^p}!)(np)_{q}!}{p(n)_{q^p}!((n)_{q}!)^p} \in R.
\end{equation}
We claim that $d_{p^r} \in R^\times$ when $r \in \mathbb N \setminus \{0\}$.
Since $R$ is a local ring and $q-1$ belongs to the maximal ideal, we may assume that $q = 1$ and in this case,
\[
d_{p^r} = \frac {((p^r!)^p- p^r!)(p^{r+1}!)}{pp^r!(p^r!)^p} = \frac {((p^{r}!)^{p-1}- 1)(p^{r+1}!)}{p(p^r!)^p}.
\]
We have
\begin{equation} \label{compvp}
v_{p}(p^{r+1}!) - pv_{p}(p^r!) - 1 = \frac {p^{r+1}-1}{p-1} - p \frac {p^r-1}{p-1} - 1 = 0
\end{equation}
and therefore $v_{p}(d_{p^r}) = 0$.
We can now show by induction that 
\[
\forall r \in \mathbb N, \exists c_{r} \in R^\times, \quad \gamma^r(\xi)\equiv c_{r}\xi^{[p^{r}]} \mod F_{p^r-1}.
\]
The formula trivially holds for $r=0$.
Moreover, since $d_1=0$ in \eqref{delta}, we have $\delta(\xi) \equiv 0 \mod F_{p-1}$ and therefore
\[
\gamma(\xi) \equiv \frac {\phi(\xi)}{(p)_q} \equiv (p-1)_q! \xi^{[p]} \mod F_{p-1}.
\]
Thus the formula also holds for $r=1$ and, assuming that it holds for some $r \geq 1$, we will have, thanks to property \eqref{delteq} and the asymmetric formula \eqref{asym},
\begin{align*}
\gamma^{r+1}(\xi) & \equiv \gamma\left(c_{r}\xi^{[p^{r}]}\right) \mod F_{p^{r+1}-1} \\
& \equiv \frac 1{(p)_{q}}\phi\left(c_{r}\xi^{[p^{r}]}\right) - \delta\left(c_{r}\xi^{[p^{r}]}\right) \mod F_{p^{r+1}-1} \\
& \equiv \frac 1{(p)_{q}}\phi(c_{r})\phi\left(\xi^{[p^{r}]}\right) - \delta(c_{r})\phi\left(\xi^{[p^r]}\right) - c_{r}^p\delta\left(\xi^{[p^r]}\right) \mod F_{p^{r+1}-1}\\
& \equiv \left((p)_{q}^{p^{r}-1}(\phi(c_{r}) - (p)_{q}\delta(c_{r})) b_{p^r,p^{r+1}} -c_{r}^pd_{p^r}\right) \xi^{[p^{r+1}]} \mod F_{p^{r+1}-1}
\end{align*}
with $b_{n,i} \in R$ as in \eqref{bcoef}.
Since $c_{r}^pd_{p^r} \in R^\times$, it is then sufficient to recall that $(p)_{q}$ belongs to the maximal ideal $(p, q-1)$ and the coefficient is therefore necessarily invertible, as asserted.

Now, if $n \in \mathbb N$ has $p$-adic expansion $n = \sum_{r \geq 0} k_{r} p^r$, then
\[
v_{n} := \prod_{r\geq0} \gamma^r(\xi)^{k_{r}} \equiv \prod_{r\geq0} c_{r}^{k_{r}}\prod_{r\geq0} (\xi^{[p^{r}]})^{k_{r}} \equiv c(n)\xi^{[n]} \mod F_{n-1}
\]
with $c(n) \in R^\times$.
It follows that $\{v_{n}\}_{n \in \mathbb N}$ is a basis for the free $A$-module $A\langle \xi \rangle_{q}$.
At this point, we should recall that $I^{[1]}$ is a $q$-PD-ideal because it is a $\delta$-ideal and $\phi(I^{[1]}) \subset (p)_{q} A\langle \xi \rangle_{q}$.
In particular, $I^{[1]}$ is stable under $\gamma$.
It follows that $\{v_{n}\}_{n > 0}$ is a basis for the free $A$-module $I^{[1]}$.
One can also show, exactly as we did above (but with a shift in the indices) that
\begin{equation} \label{newb2}
\forall r \in \mathbb N, \exists c'_r \in R^\times, \quad \delta^{r}\left(\frac{\phi(\xi)}{(p)_q}\right) \equiv c'_{r}\xi^{[p^{r+1}]} \mod F_{p^{r+1}-1}.
\end{equation}
Then, if $n$ has $p$-adic expansion $n = \sum_{r \geq 0} k_{r} p^r$ 	and we set
\[
v'_{n} := \xi^{k_0}\prod_{r\geq0} \delta^r(\phi(\xi)/(p)_q)^{k_{r+1}},
\]
we obtain another basis $\{v'_{n}\}_{n \in \mathbb N}$ for the free $A$-module $A\langle \xi \rangle_{q}$.

Assume now that $(B,J)$ is a $q$-PD-pair and that we are given a morphism of $\delta$-pairs $u : (A[\xi], (\xi)) \to (B,J)$.
Then $g := u(\xi) \in J$ and therefore $\phi(g)/(p)_q \in B$.
It follows that $u$ extends uniquely to a morphism of $R$-algebras
\[
A[\xi][\phi(\xi)/(p)_q] \simeq A[\xi,z]/((p)_qz - \phi(\xi)) \to B,
\]
and since $B$ is a $\delta$-$R$-algebra, this morphism then extends uniquely to a morphism of $\delta$-$R$-algebras
\[
U : A[\xi][\phi(\xi)/(p)_q]^\delta \simeq A[\xi][z]^\delta/((p)_qz - \phi(\xi))_\delta \to B
\]
(the $\delta$-envelopes are taken over $A[\xi]$, which is already a $\delta$-ring).
As a particular case, we can apply this construction to the canonical map
\[
\lambda : (A[\xi], (\xi)) \to \left( A\langle \xi \rangle_{q}, I^{[1]}\right),
\]
and we get a morphism of $\delta$-$R$-algebras
\[
\Lambda : A[\xi][\phi(\xi)/(p)_q]^\delta \to A\langle \xi \rangle_{q}.
\]
We will show below that $\Lambda$ is an isomorphism.
As a consequence, the map $u : A[\xi] \to B$ will extend uniquely to a morphism of $\delta$-rings that we may still call $U : A\langle \xi \rangle_q \to B$.
In particular, $U$ commutes with $\gamma$.
Since $\{v_{n}\}_{n > 0}$ is a basis for the free $A$-module $I^{[1]}$, it follows that $U$ sends $I^{[1]}$ into $J$.
In other words, $U$ is a morphism of $\delta$-pairs that extends uniquely our original morphism $u$.
This shows that $(A\langle\xi\rangle_{q}, I^{[1]})$ has the expected universal property.

It remains to show that $\Lambda$ is an isomorphism. The $\delta$-envelope $A[\xi][z]^\delta$ (over $A[\xi]$) of the polynomial ring $A[\xi, z]$ is the polynomial ring over $A$ on infinitely many variables $\{z_r\}_{r \in \mathbb N}$ where
\[
z_0 := \xi \quad \mathrm{and} \quad z_r := \delta^{r-1}(z)\ \mathrm{if}\ r>0.
\]
In particular, this is a free $A$-module with basis $\underline z^{\underline k} := \prod_{r \geq 0} z_r^{k_r}$ where $k_r = 0$ for $r >> 0$.
We will endow this free $A$-module with the \emph{weighted degree} defined by $\deg(\underline z^{\underline k}) = \sum_{r \geq 0} k_rp^r$ (and $\deg(0) = - \infty$).
In order to lighten the notations (and only in this proof), we will set
\[
E := A[\xi][\phi(\xi)/(p)_q]^\delta.
\]
We define the degree of $u \in E$ as the minimum (weighted) degree of all its representations in $A[\xi][z]^\delta$ so that
\[
\forall u,v \in E, \quad \deg(u+v) \leq \max\{\deg(u), \deg(v)\} \quad \mathrm{and} \quad \deg(uv) \leq \deg(u) + \deg(v).
\]
Using the multiplicative formula
\[
\forall u,v \in E, \quad \delta(uv) =u^p\delta(v) + \delta(u)v^p +p\delta(u)\delta(v),
\]
one easily checks by induction on $n$ that
\begin{equation} \label{muldeg}
\forall u \in E, \quad \deg(u) < n \Rightarrow \deg(\delta(u)) < pn.
\end{equation}
Note that we have an analogous property for $u \mapsto u^p$, and therefore also for the map $u \mapsto \phi(u)$.
We consider now the filtration of $E$ by the $A$-submodules $E_n$ defined by the condition $\deg (u) \leq n$ for $n \in \mathbb N$.
Let us then improve on \eqref{muldeg} and show that
\begin{equation} \label{deltaeq}
\forall u, v \in E_{n}, \quad (u \equiv v \mod E_{n-1}) \Rightarrow (\delta (u) \equiv \delta(v) \mod E_{pn-1}).
\end{equation}
If we write $v = u + w$ with $\deg(w) < n$, we have
\[
\delta(v) = \delta(u) + \delta(w) - \sum_{k=1}^{p-1} \frac 1p {p \choose k} u^{p-k}w^{k}.
\]
It follows from \eqref{muldeg} that $\deg(\delta(w)) < pn$ and we also have for all $0 < k < p$,
\begin{align*}
\deg (u^{p-k}w^{k}) &\leq (p-k)\deg(u)+ k \deg(w) 
\\& \leq (p-k)n+ k(n-1)
\\& = pn-k
\\& \leq pn-1.
\end{align*}
We will now prove by induction that, if we denote by $\overline z_r$ the class of $z_r$ in $E$, then
\begin{equation} \label{invert}
\forall r \geq 0, \exists e_r \in R^\times \quad e_r{\overline z}_{r}^p \equiv (p)_{q^{p^{r}}}{\overline z}_{r+1} \mod E_{p^{r+1}-1}.
\end{equation}
The formula holds when $r=0$ with $e_0=1$ because $\phi(\xi) = (p)_q\overline z$ and $\phi(\xi) \equiv \xi^p \mod E_{p-1}$ thanks to equation \eqref{xiform} which shows that $\deg(\delta(\xi)) < p$.
For the induction process, we also need the case $r=1$ and we use the formula $\delta(\phi(\xi)) = \delta((p)_q\overline z)$.
On the one hand, we have
\[
\delta(\phi(\xi)) = \phi(\delta(\xi)) \equiv 0 \mod E_{p^2-1}
\]
thanks to property \eqref{muldeg} (or more precisely its $\phi$ analog).
On the other hand,
\[
\delta((p)_q\overline z) = \delta((p)_q)\overline z^p + (p)_{q^p}\delta(\overline z).
\]
Property \eqref{invert} therefore holds when $r=1$ with $e_1=-\delta((p)_q)$ wich is invertible because $(p)_q$ is distinguished.
Now we fix $r \geq 2$ and we assume that for all $s < r$, there exists $e_{s} \in R^\times$ such that
\begin{equation} \label{indhyp}
 e_{s}{\overline z}_{s}^p \equiv (p)_{q^{p^{s}}}{\overline z}_{s+1} \mod E_{p^{s+1}-1}.
\end{equation}
From the case $s=r-1$ and property \eqref{deltaeq}, we get
 \[
 \delta(e_{r-1}{\overline z}_{r-1}^p) \equiv \delta((p)_{q^{p^{r-1}}}{\overline z}_{r}) \mod E_{p^{r+1}-1},
\]
which can be rewritten as
\[
\delta(e_{r-1}){\overline z}_{r-1}^{p^2} + \phi(e_{r-1})\delta({\overline z}_{r-1}^p) \equiv \delta((p)_{q^{p^{r-1}}}) {\overline z}_r^p + (p)_{q^{p^{r}}} {\overline z}_{r+1} \mod E_{p^{r+1}-1},
\]
or, for later use,
\begin{equation}\label{lateruse}
\delta(e_{r-1}){\overline z}_{r-1}^{p^2} + \phi(e_{r-1})\delta({\overline z}_{r-1}^p) - \delta((p)_{q^{p^{r-1}}}) {\overline z}_r^p \equiv (p)_{q^{p^{r}}} {\overline z}_{r+1} \mod E_{p^{r+1}-1}.
\end{equation}
At this point, it is necessary to introduce the $R$-submodule $W_{r+1}$ of $E$ generated by all $\underline {\overline z}^{\underline k}$ such that
\begin{equation} \label{wcond}
\deg(\underline {z}^{\underline k}) \leq p^{r+1} \quad \mathrm{and} \quad k_{r+1}=0
\end{equation}
(so that $z_{r+1}$ is ruled out).
We will now show that
\begin{equation} \label{wind}
\forall u \in W_{r+1}, \exists e \in R, \quad u \equiv e{\overline z}_{r}^p \mod E_{p^{r+1}-1}.
\end{equation}
We may clearly assume that $u = \overline {\underline z}^{\underline k}$ and that conditions \eqref{wcond} are satisfied.
If $\deg(u) < p^{r+1}$, then we are done (put $e=0$).
Otherwise, since $k_{r+1} = 0$, there exists $s\in \{0, \ldots, r\}$ such that $k_s \geq p$.
We can then write $u=v\overline z_s^p$ with $v = \overline {\underline z}^{\underline l}$ ($l_s = k_s-p$ and $l_r=k_r$ otherwise) and proceed by infinite descent on $|k| := \sum_{i=0}^rk_i \geq 0$.
In the case $s=r$, we have $v=1$ and we are done.
%Notice that this corresponds to the minimal possible value $p$ of $|u|$, and therefore is the starting point for the induction.
So we may assume now that $s < r$ and use our induction hypothesis \eqref{indhyp} so that
\[
u \equiv\frac{(p)_{q^{p^{s}}}}{e_s}v{\overline z}_{s+1} \mod E_{p^{r+1}-1}
\] 
and $v z_{s+1}= \underline z^{\underline m}$ ($m_{s+1} = 1$ and $m_r=l_r$ otherwise) with $|\underline m| < |\underline k|$.
Property \eqref{wind} is now proved and we can apply it to
\[
u := \delta(e_{r-1}){\overline z}_{r-1}^{p^2} + \phi(e_{r-1})\delta({\overline z}_{r-1}^p) - \delta((p)_{q^{p^{r-1}}}) {\overline z}_r^p.
\]
Using \eqref{lateruse}, we obtain formula \eqref{invert} but we still need to prove that $e_r$ is invertible.

In order to show that, we can assume as above that $q=1$.
Then, we can use corollary 2.39 of \cite{BhattScholze22} and identify $E$ with the the divided polynomial ring $ A \langle \xi \rangle$.
Using formula \eqref{newb2}, we also see that $E_n$ then identifies with $F_n$.
In this new setting, since we already showed property \eqref{invert} up to the fact that $e_r$ is invertible, we can write
\[
e_r\left(\delta^{r-1}\left(\frac{\phi(\xi)}{p}\right)\right)^p \equiv p\delta^{r}\left(\frac{\phi(\xi)}{p}\right) \mod F_{p^{r+1}-1}.
\]
But we know from \eqref{newb2} that there exists $c'_r, c'_{r-1} \in R^\times$ such that
\[
\delta^{r}\left(\frac{\phi(\xi)}{p}\right) \equiv c'_{r}\xi^{[p^{r+1}]} \quad \mathrm{and} \quad \left(\delta^{r-1}\left(\frac{\phi(\xi)}{p}\right)\right)^p \equiv c_{r-1}'^p \left(\xi^{[p^{r}]}\right)^p \mod F_{p^{r+1}-1}.
\]
Moreover, it follows from computation \eqref{compvp} that
\[
\exists f\in R^\times, \quad \left(\xi^{[p^{r}]}\right)^p \equiv fp \xi^{[p^{r+1}]} \mod F_{p^{r+1}-1}.
\]
Therefore, since $R$ is $p$-torsion free, $e_r=f^{-1}c’_r c’^{-p}_{r-1} \in R^\times$ and property \eqref{invert} is now completely proved.

In the same way as we showed property \eqref{wind}, one can now prove that $E$ is generated as an $A$-module by $\{\underline {\overline z}^{\underline k}, \quad k_r < p \}$.
In other words, we want to show that any $u \in E$ is an $A$-linear combination of $\underline{\overline z}^{\underline{k}}$ with all $k_r<p$.
We proceed by induction on $n := \deg(u)$, the result being trivial for $n<p$.
We may clearly assume that $u=\underline{\overline z}^{\underline{k}}$ with $\sum_r k_r p^r \leq n$.
Now, we proceed by infinite descent on $|\underline k| := \sum_{i \geq 0} k_i \geq 0$.
Assume that some $k_s\ge p$ and write $u=v \overline z_s^p$, where $v=\underline{z}^{\underline{l}}$ with $l_s=k_s-p$ and $l_r=k_r$ for $r\neq s$.
Thanks to \eqref{indhyp}, we have $z_s^p \equiv a z_{s+1} \mod E_{p^{s+1}-1}$ with $a\in R$.
Since $\deg(v) \leq n - p^{s+1}$, it follows that $u \equiv a v z_{s+1} \mod E_{n-1}$.
Since $v z_{s+1} \in E_n$ and $v z_{s+1} = \underline z^{\underline m}$ with $|\underline m| < |\underline k|$, we are done.

Let us come back to our morphism
\[
\Lambda : E \to A\langle \xi \rangle_q.
\]
We can define an $A$-linear section of $\Lambda$ by sending the basis vector $v'_n$ to $\underline {\overline z}^{\underline k}$ if $n$ has $p$-adic expansion $n = \sum_{r \geq 0} k_{r} p^r$.
Since $E$ is generated as an $A$-module by $\{\underline {\overline z}^{\underline k}, \quad k_r < p \}$, this section is surjective so that $\Lambda$ is bijective and we can identify both $\delta$-$R$-algebras as we claimed.
\end{proof}

\begin{rmks}
\begin{enumerate}
\item
In the proof of theorem \ref{algun}, it is actually not obvious at all from the explicit formulas that that the various coefficients are indeed invertible.
For example, in the simplest non trivial case $p=2$ and $r = 1$, we have $d_{2} = q + q^2 + q^3$, which is a non-trivial unit.
\item Be careful that the isomorphism in theorem \ref{algun} is very specific to our situation and it is not true for example that, if $X$ is an indeterminate, then $R[X, \phi(X)/(p)_q]^\delta$ is the $q$-PD-envelope of $R[X]^\delta$ with respect to the augmentation ideal unless $q=1$.
\item 
Our result may be seen as a decompletion in our situation of lemma 16.10 of \cite{BhattScholze22}.
Note however that it is not even clear \emph{a priori} that $A[\xi][\phi(\xi)/(p)_q]^\delta$ is $(p)_q$-torsion free before completion.
\item
Theorem \ref{algun} is closely related to Jonathan Pridham's work in \cite{Pridham19}.
For example, his lemma 1.3 shows that (when $A$ is not merely a $\delta$-ring but actually a $\lambda$-ring)
\[
\lambda^n\left(\frac \xi{q-1}\right) = \frac 1{(q-1)^n} \xi^{[n]} \quad \left( = \left(\frac \xi{q-1}\right)^{[n]}\right)
\]
when $q-1 \in R^\times$ (in which case $A[\xi] = A\langle \xi \rangle_{q}$).
\item
Note also that, as a corollary of our theorem, one recovers the existence of the $q$-logarithm (after completion, but see below) as in lemma 2.2.2 of \cite{AnschuetzLeBras19b}.
\end{enumerate}
\end{rmks}

%%%%%%%%%%%%%%%%%%%%%%%%%%
\section{Complete $q$-PD-envelopes}

As before, $R$ denotes a $\delta$-ring with fixed rank one element $q$ and we assume that $R$ is actually a $\mathbb Z[q]_{(p,q-1)}$-algebra.
Any $R$-algebra $B$ will be implicitly endowed with its $(p, q-1)$-adic topology and we will denote by $\widehat B$ or $B^\wedge$ its completion for this topology, which is automatically a $\mathbb Z_{p}[[q-1]]$-algebra (as in \cite{BhattScholze22}).
We recall that, if $B$ is a $\delta$-$R$-algebra, then $\delta$ is necessarily continuous and extends therefore uniquely to $\widehat B$.
We also want to mention that a complete ring is automatically $(p)_{q}$-complete. Actually, the $(p, q-1)$-adic topology and the $(p, (p)_{q})$-adic topology coincide thanks to congruences \eqref{cong}, and we will often simply call this the adic topology.

%%%%%%%%%%%%
\begin{rmk}
Besides the usual completion $\widehat B$ of $B$, one may also consider its \emph{derived completion} (\cite[\href{https://stacks.math.columbia.edu/tag/091N}{Tag 091N}]{stacks-project}) $R\varprojlim B^\bullet_{n}$, where $B^\bullet_{n}$ denotes the Koszul complex
\[
\xymatrix@R=0cm{B \ar[r] & B \oplus B \ar[r] & B \\ f \ar@{|->}[r] & (p^nf,-(q-1)^nf) \\ & (f, g) \ar@{|->}[r] & (q-1)^nf+p^ng.}
\]
More generally, one can consider the derived completion functor
\[
K^\bullet \mapsto \widehat {K^\bullet} := R\varprojlim (K^\bullet \otimes^L_{B} B^\bullet_{n})
\]
on complexes of $B$-modules.
If $M$ is a $B$-module and $M[0]$ denotes the corresponding complex concentrated in degree zero, then there exists a canonical map $\widehat {M[0]} \to \widehat M[0]$ which is \emph{not} an isomorphism in general (but see below).
\end{rmk}

We recall that an abelian group $M$ has \emph{bounded $p^\infty$-torsion} if
\[
\exists l \in \mathbb N, \forall s \in M, \forall m \in \mathbb N, \quad p^ms = 0 \Rightarrow p^ls=0.
\]
We will need the following result:

%%%%%%%%%%%%%%%%%%
\begin{lem} \label{bdcomp}
If $M$ has bounded $p^\infty$-torsion, so does its $(p)$-adic completion.
\end{lem}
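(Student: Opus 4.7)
The plan is to reduce to the case of a $p$-torsion-free module by splitting off the bounded torsion.

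Let $l$ be a bound for the $p^\infty$-torsion of $M$, so that $T := M[p^\infty] = M[p^l]$. Let $N := M/T$; since $T$ absorbs all $p$-power torsion of $M$, the quotient $N$ is $p$-torsion-free. I would first establish that $(p)$-adic completion preserves $p$-torsion-freeness: if $\widehat N = \varprojlim N/p^nN$ and $\hat s = (s_n)$ satisfies $p\hat s = 0$, then $ps_n \in p^nN$, i.e.\ $ps_n = p^nu_n$, so $p(s_n - p^{n-1}u_n) = 0$ in $N$, forcing $s_n \in p^{n-1}N$ for $n \geq 1$. But then $s_n$ maps to $0$ in $N/p^{n-1}N$, so $\bar s_{n-1} = 0$ for all $n$ and $\hat s = 0$.

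Next I would apply the snake lemma to the diagram obtained by multiplying the short exact sequence $0 \to T \to M \to N \to 0$ by $p^n$. For $n \geq l$ this yields
\[
0 \to T \to M/p^nM \to N/p^nN \to 0,
\]
because $T[p^n] = M[p^n] = T$ (identified by the identity map of $T$), $N[p^n] = 0$ and $T/p^nT = T$. The transition maps of the left-hand system are identities, so the $\varprojlim^1$ vanishes by Mittag-Leffler, and taking $\varprojlim$ over $n \geq l$ gives a short exact sequence
\[
0 \to T \to \widehat M \to \widehat N \to 0.
\]

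From this the conclusion is immediate: any $\hat s \in \widehat M$ killed by some power of $p$ maps to a $p^\infty$-torsion element of $\widehat N$, which must be zero by the first step, hence $\hat s \in T$, and $T$ is annihilated by $p^l$. Thus $\widehat M[p^\infty] = T$, with the same bound $l$. The only mildly delicate point is the snake-lemma identification that turns the sequence of four terms into a short exact sequence with the constant system $\{T\}$ on the left; everything else is formal.
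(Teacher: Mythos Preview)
Your proof is correct, and it takes a genuinely different route from the paper's argument.

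The paper argues directly at the level of Cauchy sequences: given $\hat s \in \widehat M$ with $p^m \hat s = 0$, represent $\hat s$ by a sequence $(s_n)$ in $M$ with $p^m s_n \to 0$; for each $k \geq l$ one can write $p^m s_n = p^{k+m-l} t$ for $n$ large, and then the bounded-torsion hypothesis forces $p^l s_n = p^k t$, so $p^l s_n \to 0$ and $p^l \hat s = 0$. No auxiliary modules or exact sequences are introduced.

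Your approach is more structural: you split $M$ as an extension of the $p$-torsion-free quotient $N$ by the finite-exponent torsion $T$, show that completion preserves $p$-torsion-freeness, and use the snake lemma plus Mittag--Leffler to obtain $0 \to T \to \widehat M \to \widehat N \to 0$. This not only gives the same bound $l$ but actually identifies $\widehat M[p^\infty]$ with $T$ itself, a slightly stronger conclusion. The paper's argument is shorter and avoids any exactness machinery, while yours makes the mechanism transparent and yields the extra information that completion does not create any new $p$-power torsion.
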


\begin{proof}
We use the notations of the definition and we assume that $\{s_{n}\}_{n \in \mathbb N}$ is a sequence in $M$ such that $p^ms_{n} \to 0$ when $n \to \infty$ for some $m \geq l$.
Thus, given $k \in \mathbb N$ with $k \geq l$, if we write $k' = k+m-l$, there exists $N \in \mathbb N$ such that for all $n \geq N$, we can write $p^ms_{n} = p^{k'}t$ with $t \in M$.
But then $p^m(s_{n} - p^{k'-m}t) = 0$ and therefore already $p^l(s_{n} - p^{k'-m}t) = 0$ or $p^ls_{n} = p^{k}t$.
This shows that $p^ls_{n} \to 0$.
\end{proof}

%%%%%%%%%%%%
\begin{dfn}
An $R$-algebra $B$ is \emph{bounded}\footnote{We should perhaps say \emph{$q$-bounded} or even \emph{$p$-$q$-bounded}, but we prefer the simpler name.} if $B$ is $(p)_{q}$-torsion free and $B/(p)_{q}$ has bounded $p^\infty$-torsion.
\end{dfn}

%%%%%%%%%%%%
\begin{rmks}
\begin{enumerate}
\item
If $B$ is bounded, then the computations carried out in the proof of lemma 3.7 of \cite{BhattScholze22} show that $\widehat {B[0]} = \widehat B[0]$.
In other words, for our purpose, there is no reason in this situation to introduce the notion of derived completion.
\item
It is equivalent to say that $B$ is a complete bounded $\delta$-ring or that $(B, (p)_{q})$ is a \emph{bounded prism} in the sense of \cite{BhattScholze22}, definition 1.4.
\end{enumerate}
\end{rmks}

%%%%%%%%%%%%%%%%%
\begin{prop} \label{topind}
If $B$ is a bounded $R$-algebra, then the adic topology on $(p)_{q}B$ is identical to the topology induced by the adic topology of $B$.
\end{prop}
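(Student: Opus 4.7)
The plan is to translate the statement into a comparison of ideal filtrations. The adic topology on $(p)_q B$ has $\{(p,q-1)^n (p)_q B\}_n$ as a fundamental system of neighborhoods of $0$, while the induced topology has $\{(p)_q B \cap (p,q-1)^n B\}_n$; the containment $(p,q-1)^n (p)_q B \subseteq (p)_q B \cap (p,q-1)^n B$ is automatic, so the content is to produce, for each $n$, an integer $m$ with $(p)_q B \cap (p,q-1)^m B \subseteq (p)_q (p,q-1)^n B$. Since $B$ is $(p)_q$-torsion-free, multiplication by $(p)_q$ is injective, and after cancelling $(p)_q$ the claim reduces to the implication: if $(p)_q b \in (p,q-1)^m B$, then $b \in (p,q-1)^n B$.

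I would then replace the $(p, q-1)$-adic filtration with the equivalent $(p, (p)_q)$-adic one (via the congruences \eqref{cong}), since $(p)_q$ now appears explicitly as a generator and is easier to manipulate. Expanding $(p,(p)_q)^m B = \sum_{i+j=m} p^i (p)_q^j B$, write the hypothesis as $(p)_q b = \sum_{i+j=m} p^i (p)_q^j b_{i,j}$. All terms with $j \geq 1$ already carry a factor of $(p)_q$, so after cancelling $(p)_q$ they contribute to $b$ an element of $(p,(p)_q)^{m-1} B$. I am left to analyze a relation of the form $(p)_q b' = p^m b_{m,0}$, that is, $p^m b_{m,0} \equiv 0 \pmod{(p)_q}$ in $B$.

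This is the main obstacle and where the boundedness hypothesis is used decisively: letting $l$ denote a uniform bound for the $p^\infty$-torsion of $B/(p)_q$, the congruence $p^m b_{m,0} \equiv 0$ in $B/(p)_q$ forces $p^l b_{m,0} \in (p)_q B$, so $p^l b_{m,0} = (p)_q c$ for some $c \in B$. Substituting back, $(p)_q b' = p^{m-l} (p)_q c$, and cancelling $(p)_q$ by torsion-freeness yields $b' = p^{m-l} c \in p^{m-l} B$. Combining both contributions, $b \in (p,(p)_q)^{\min(m-1, m-l)} B$, and so taking $m := n + l$ forces $b$ into $(p,(p)_q)^n B \subseteq (p,q-1)^n B$, as desired.

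In short, the only substantive input is boundedness: it is exactly what allows a uniform trade of a $(p)_q$-denominator for a controlled power of $p$ in the quotient $B/(p)_q$. Without it, one could have $p^m b_{m,0} \in (p)_q B$ for $b_{m,0}$ of arbitrarily bad $p$-torsion, in which case the dividend $b'$ would not be forced into a sufficiently deep power of $(p, q-1)$, and the induced topology on $(p)_q B$ could be strictly coarser than its adic topology.
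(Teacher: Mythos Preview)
Your argument is correct and follows essentially the same route as the paper's proof. The paper organizes the reduction a bit differently: it first notes that, because $(p,(p)_q)^m = p^m B + (p)_q(p,(p)_q)^{m-1}B$, it suffices to compare the $(p)$-adic topologies on $(p)_qB$ and $B$, and then runs exactly your bounded-torsion argument with the exponent $l$ and the choice $m=n+l$. Your inline splitting into the $j\ge1$ and $j=0$ summands is precisely this reduction carried out by hand, and your use of boundedness to pass from $p^m b_{m,0}\equiv 0$ to $p^l b_{m,0}\equiv 0$ modulo $(p)_q$, followed by cancellation of $(p)_q$, is identical to the paper's key step.

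One cosmetic point: with $m=n+l$ your bound $\min(m-1,m-l)$ equals $n-1$ in the edge case $l=0$ (i.e.\ when $B/(p)_q$ is $p$-torsion-free). Taking $m=n+\max(l,1)$, or simply replacing $l$ by $l+1$, removes this harmless off-by-one.
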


\begin{proof}
Since the topology is defined by the ideal $(p, (p)_{q})$, it is actually sufficient to show that the $(p)$-adic topology on $(p)_{q}B$ is identical to the topology induced by the $(p)$-adic topology of $B$:
\[
\forall n \in \mathbb N, \exists m \in \mathbb N, \quad (p)_{q}B \cap p^mB \subset p^n(p)_{q}B.
\]
In other words, we have to show that
\[
\forall n \in \mathbb N, \exists m \in \mathbb N, \forall f \in B, \quad (\exists g \in B, p^mf = (p)_{q}g) \Rightarrow (\exists h \in B, p^mf = (p)_{q}p^nh).
\]
But since $B/(p)_{q}$ has bounded $p^\infty$-torsion, we know that
\[
\exists l \in \mathbb N, \forall f \in B, \forall m \in \mathbb N, \quad (\exists g \in B, p^mf = (p)_{q}g) \Rightarrow (\exists h \in B, p^lf = (p)_{q}h).
\]
It is therefore sufficient to set $m = n+l$.
\end{proof}

As a consequence, we see that the $q$-PD-condition is closed:

\begin{cor}\label{closcond}
If $B$ is a bounded $\delta$-$R$-algebra and $J \subset B$ is a $q$-PD- ideal, then its closure $\overline J^{\mathrm {cl}}$ for the adic topology is also a $q$-PD-ideal.
\end{cor}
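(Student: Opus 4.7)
The plan is to verify the single non-trivial condition \eqref{dpcond}, since $\overline{J}^{\mathrm{cl}}$ is automatically an ideal (closure of an ideal in a topological ring) and $B$ is $(p)_{q}$-torsion-free by hypothesis. Concretely, for each $f \in \overline{J}^{\mathrm{cl}}$ I must produce $g \in \overline{J}^{\mathrm{cl}}$ with $\phi(f) - (p)_{q} \delta(f) = (p)_{q} g$.

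First I would pick a sequence $\{f_{n}\} \subset J$ converging to $f$ in the $(p, q-1)$-adic topology, and use the continuity of the Frobenius $\phi$ and of the $p$-derivation $\delta$ (both adically continuous, as recalled at the end of section 1) to deduce that $\phi(f_{n}) - (p)_{q} \delta(f_{n}) \to \phi(f) - (p)_{q} \delta(f)$ in $B$. Since $J$ already satisfies \eqref{dpcond}, I may write $\phi(f_{n}) - (p)_{q} \delta(f_{n}) = (p)_{q} g_{n}$ with $g_{n} \in J$, so that the sequence $(p)_{q} g_{n}$ is Cauchy in $B$.

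The crux will be to transfer this Cauchy property back to the $g_{n}$ themselves. This is exactly what Proposition \ref{topind} is designed to provide: the induced and intrinsic adic topologies on the ideal $(p)_{q} B \subset B$ coincide, and the effective statement extracted from its proof is that for every $n$ there exists $m$ with $(p)_{q} B \cap (p,q-1)^m B \subset (p,q-1)^n (p)_{q} B$. Applied to the differences $(p)_{q}(g_{k} - g_{k'})$ and combined with the $(p)_{q}$-torsion-freeness to cancel the factor $(p)_{q}$, this forces $\{g_{n}\}$ to be Cauchy in $B$. In the complete setting of this section, it then converges to some $g \in B$, which lies in $\overline{J}^{\mathrm{cl}}$ because each $g_{n} \in J$; continuity of multiplication by $(p)_{q}$ then yields $(p)_{q} g = \phi(f) - (p)_{q} \delta(f)$, as required.

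The main obstacle is precisely this transfer of the Cauchy property, which is why Proposition \ref{topind} was set up immediately beforehand: without it one has no control on the $g_{n}$, since the $q$-PD-factorization is unique (by $(p)_{q}$-torsion-freeness) but would a priori provide no metric estimates relating $g_{n}$ and $(p)_{q} g_{n}$. Once that passage is available, the rest of the proof reduces to the standard continuity-and-limit argument.
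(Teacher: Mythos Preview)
Your approach is essentially the paper's: both rely on the continuity of $\phi-(p)_q\delta$ together with Proposition~\ref{topind}. The paper packages the second ingredient more slickly, observing that multiplication by $(p)_q$ is a homeomorphism $B\simeq(p)_qB$ and hence $(p)_q\overline{J}^{\mathrm{cl}}=\overline{(p)_qJ}^{\mathrm{cl}}$, while you unwind the same idea at the level of Cauchy sequences.

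One caveat: you appeal to completeness of $B$ to make the Cauchy sequence $\{g_n\}$ converge, but the corollary as stated does not assume $B$ complete. The paper's closure-level formulation sidesteps the need to extract an actual limit of the $g_n$ in $B$; it only needs that the homeomorphism carries closures to closures.
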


\begin{proof}
Recall that the condition means that $(\phi -(p)_{q}\delta) (J) \subset (p)_{q}J$.
By continuity, we will have $(\phi -(p)_{q}\delta) (\overline J^{\mathrm {cl}}) \subset \overline {(p)_{q}J}^{\mathrm {cl}}$.
Now, it follows from proposition \ref{topind} that multiplication by $(p)_{q}$ induces a homeomorphism $B \simeq (p)_{q}B$ and therefore $(p)_{q}\overline {J}^{\mathrm {cl}} =\overline {(p)_{q}J}^{\mathrm {cl}}$.
\end{proof}

We also obtain that boundedness is preserved by completion:

%%%%%%%%%%%%%%%%%
\begin{cor} \label{compbound}
If $B$ is a bounded $R$-algebra, then $\widehat B$ also is bounded and the ideal $(p)_{q}\widehat B$ is closed in $\widehat B$ (or equivalently $\widehat B/(p)_{q}\widehat B$ is complete).
\end{cor}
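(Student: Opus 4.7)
The strategy is to apply $(p, q-1)$-adic completion to the short exact sequence
\[
0 \to B \xrightarrow{(p)_{q}} B \to B/(p)_{q}B \to 0
\]
and show that it remains exact. The crucial ingredient is Proposition \ref{topind}: multiplication by $(p)_{q}$ is a topological isomorphism from $B$ (with its adic topology) onto $(p)_{q}B$ equipped with the subspace topology from $B$. In particular, the completion of $(p)_{q}B$ for the subspace topology, which by definition is the closure of $(p)_{q}B$ in $\widehat B$, is identified with $\widehat B$. This is precisely the Artin-Rees-type condition guaranteeing that completion preserves the exact sequence above, yielding
\[
0 \to \widehat B \xrightarrow{(p)_{q}} \widehat B \to \widehat{B/(p)_{q}B} \to 0.
\]
From this I immediately read off that $\widehat B$ is $(p)_{q}$-torsion-free, that the image $(p)_{q}\widehat B$ is closed in $\widehat B$ (being the kernel of a continuous map to the Hausdorff ring $\widehat{B/(p)_{q}B}$), and that there is a canonical identification $\widehat B/(p)_{q}\widehat B \simeq \widehat{B/(p)_{q}B}$.

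It remains to show that $\widehat B/(p)_{q}\widehat B$ has bounded $p^\infty$-torsion. For this I would first observe that on $B/(p)_{q}B$ the $(p, q-1)$-adic topology coincides with the $(p)$-adic topology: indeed, the congruences \eqref{cong} applied with $k=1$ give $(p)_{q} \equiv p \pmod{q-1}$ and $(p)_{q} \equiv (q-1)^{p-1} \pmod{p}$, so modulo $(p)_{q}$ one has $p \in (q-1)$ and $(q-1)^{p-1} \in (p)$, whence the two topologies agree. Consequently $\widehat{B/(p)_{q}B}$ coincides with the classical $(p)$-adic completion of $B/(p)_{q}B$, and since $B/(p)_{q}B$ has bounded $p^\infty$-torsion by assumption, Lemma \ref{bdcomp} supplies the conclusion.

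The only modest obstacle is verifying that completion is exact on the relevant short exact sequence; once Proposition \ref{topind} is in hand, this is a standard lemma on completions of filtered abelian groups satisfying an Artin-Rees condition. Finally, the equivalence stated parenthetically in the corollary, between the closedness of $(p)_{q}\widehat B$ in $\widehat B$ and the completeness of $\widehat B/(p)_{q}\widehat B$, follows from the general fact that in a complete Hausdorff topological ring, a quotient by an ideal is complete if and only if the ideal is closed.
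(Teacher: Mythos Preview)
Your proof is correct and follows essentially the same route as the paper: use Proposition~\ref{topind} to see that the short exact sequence $0 \to B \xrightarrow{(p)_q} B \to B/(p)_q \to 0$ is strict, complete it, and then invoke Lemma~\ref{bdcomp}. You even supply a detail the paper leaves implicit, namely that the $(p,q-1)$-adic and $(p)$-adic topologies agree on $B/(p)_q$, which is what justifies applying Lemma~\ref{bdcomp} (a statement about $(p)$-adic completion) to the $(p,q-1)$-adic completion $\widehat{B/(p)_q}$.
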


\begin{proof}
It follows from proposition \ref{topind} that the sequence
\[
0 \longrightarrow B \overset {(p)_{q}} \longrightarrow B \longrightarrow B/(p)_{q} \longrightarrow 0
\]
is \emph{strict} exact, and then the sequence
\[
0 \longrightarrow \widehat B \overset {(p)_{q}} \longrightarrow \widehat B \longrightarrow \widehat {B/(p)_{q}} \longrightarrow 0
\]
 is therefore also strict exact.
In particular, $\widehat B$ is $(p)_{q}$-torsion free and the ideal $(p)_{q}\widehat B$ is closed in $\widehat B$.
Moreover, $\widehat B/(p)_{q} = \widehat {B/(p)_{q}} $ has bounded $p^\infty$-torsion thanks to lemma \ref{bdcomp}.
\end{proof}

The next result is basic but very useful in practice:

\begin{lem} \label{bounded}
If $R$ is bounded and $B$ is flat (over $R$) then $B$ is also bounded.
\end{lem}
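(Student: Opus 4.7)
The goal is to verify the two conditions defining boundedness for $B$: that $B$ is $(p)_{q}$-torsion free and that $B/(p)_{q}B$ has bounded $p^\infty$-torsion.

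The first condition is immediate: since $R$ is bounded, multiplication by $(p)_{q}$ is an injection $R \hookrightarrow R$, and flatness of $B$ over $R$ preserves this injectivity, so $(p)_{q}$ acts injectively on $B = B \otimes_{R} R$ as well.

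For the second condition, the plan is to use flatness to transport the boundedness of $p^\infty$-torsion from $R/(p)_{q}$ to $B/(p)_{q}B$. First I would split the four-term exact sequence
\[
0 \longrightarrow (R/(p)_{q})[p^m] \longrightarrow R/(p)_{q} \overset{p^m}\longrightarrow R/(p)_{q} \longrightarrow R/(p^m,(p)_{q}) \longrightarrow 0
\]
into two short exact sequences and tensor each with $B$ over $R$. Flatness ensures both remain exact, and a direct identification of the resulting kernels and cokernels gives the key formula
\[
(B/(p)_{q}B)[p^m] \;\simeq\; B \otimes_{R} (R/(p)_{q})[p^m].
\]
Since $R$ is bounded, there exists $l \in \mathbb N$ such that $p^l$ annihilates $(R/(p)_{q})[p^m]$ for every $m$; tensoring with $B$ then shows that $p^l$ annihilates $(B/(p)_{q}B)[p^m]$ for every $m$, which is the desired bound.

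The main (very mild) obstacle is simply keeping track of the two intermediate short exact sequences carefully enough to justify that $B \otimes_{R}(R/(p)_{q})[p^m]$ really is the $p^m$-torsion of $B/(p)_{q}B$ and not merely contained in it; once that identification is made, the conclusion is automatic from the existence of a uniform annihilator $p^l$ for the $p^\infty$-torsion in $R/(p)_{q}$.
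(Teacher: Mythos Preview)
Your proposal is correct and follows essentially the same approach as the paper: both prove $(p)_q$-torsion freeness by tensoring the multiplication-by-$(p)_q$ sequence with $B$, and both obtain the identification $(B/(p)_q)[p^m] \simeq B \otimes_R (R/(p)_q)[p^m]$ by tensoring the four-term exact sequence for $p^m$-torsion in $R/(p)_q$ with the flat module $B$. Your explicit splitting into two short exact sequences is just the standard justification for why flatness preserves the four-term sequence, which the paper leaves implicit.
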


\begin{proof}
Upon tensoring with $B$ over $R$, the exact sequence
\[
0 \longrightarrow R \overset {(p)_{q}} \longrightarrow R \longrightarrow R/(p)_{q} \longrightarrow 0
\]
provides an exact sequence
\[
0 \longrightarrow B \overset {(p)_{q}} \longrightarrow B \longrightarrow B/(p)_{q}\longrightarrow 0.
\]
This shows that $B$ is $(p)_{q}$-torsion free.
Also, upon tensoring with $B$ over $R$, the exact sequence
\[
0 \longrightarrow R/(p)_{q}[p^n] \longrightarrow R/(p)_{q} \overset {p^n}\longrightarrow R/(p)_{q} \longrightarrow R/((p)_{q}, p^n) \longrightarrow 0
\]
provides another exact sequence
\[
0 \longrightarrow B\otimes_{R} (R/(p)_{q})[p^n] \longrightarrow B/(p)_{q} \overset {p^n}\longrightarrow B/(p)_{q} \longrightarrow B/((p)_{q}, p^n) \longrightarrow 0.
\]
This shows that $B\otimes_{R} (R/(p)_{q})[p^n] = (B/(p)_{q})[p^n]$.
It follows that $B/(p)_{q}$ has bounded $p^\infty$-torsion (with the same bound as $R/(p)_{q}$).
\end{proof}

%%%%%%%%%%%%
\begin{dfn}
A $\delta$-pair $(B, J)$ is \emph{complete} if $B$ is complete and $J$ is closed (or equivalently $\overline B := B/J$ is also complete).
More generally, the \emph{completion} of a $\delta$-pair $(B, J)$ is the $\delta$-pair $(\widehat B, \overline J^{\mathrm {cl}})$ where $\overline J^{\mathrm {cl}}$ denotes the closure of $J\widehat B$ in $\widehat B$.
\end{dfn}

\begin{rmks}
\begin{enumerate}
\item
We usually have $J\widehat B \neq \overline J^{\mathrm {cl}}$ unless $B$ is noetherian.
\item Completion is clearly universal for morphisms to complete $\delta$-pairs.
\end{enumerate}
\end{rmks}

\begin{xmp}
If $B$ is a bounded $\delta$-ring, then $(\widehat B, 0)$ and $(\widehat B, \phi^{-1}((p)_{q}\widehat B))$ are complete bounded $q$-PD-pairs.
This is also the case for $(\widehat B, (q-1)\widehat B)$ if we assume for example that $B/(q-1)$ is $p$-torsion free as in definition 16.2 of \cite{BhattScholze22} (in which case the ideal is closed).
\end{xmp}

We may now make the following definition:

%%%%%%%%%%%%
\begin{dfn}
If $(B, J)$ is a $\delta$-pair, then (if it exists) its \emph{complete $q$-PD envelope} $\left(B^{\widehat{[\ ]}_{q}}, J^{\overline{[\ ]}^{\mathrm {cl}}_{q}}\right)$ is a complete $q$-PD-pair which is universal for morphisms to complete $q$-PD-pairs.
\end{dfn}

%%%%%%%%%%%%%%
\begin{prop} \label{bound}
Let $(B, J)$ be a $\delta$-pair.
If the (non complete) $q$-PD-envelope exists \emph{and} is bounded, then its completion is the complete $q$-PD-envelope of $(B,J)$.
\end{prop}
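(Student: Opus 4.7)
The plan is to verify two things: (i) that $\bigl(\widehat{B^{[\,]_q}},\, \overline{J^{[\,]_q}}^{\mathrm{cl}}\bigr)$ is itself a complete $q$-PD-pair, and (ii) that it has the required universal property. Both reduce quickly to results already established in this section, so no new hard computation should be needed.

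For (i), boundedness of $B^{[\,]_q}$ together with Corollary \ref{compbound} shows that $\widehat{B^{[\,]_q}}$ is again bounded, and in particular $(p)_q$-torsion-free. The $\delta$-structure on $B^{[\,]_q}$ is continuous for the adic topology and therefore extends uniquely to a $\delta$-structure on the completion, as recalled at the beginning of the section. Lemma \ref{formpair}(1), applied to the morphism of $\delta$-rings $B^{[\,]_q} \to \widehat{B^{[\,]_q}}$, then gives that $J^{[\,]_q}\widehat{B^{[\,]_q}}$ is a $q$-PD-ideal in $\widehat{B^{[\,]_q}}$, and Corollary \ref{closcond} promotes this to the statement that its adic closure $\overline{J^{[\,]_q}}^{\mathrm{cl}}$ remains a $q$-PD-ideal. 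By construction the ambient ring is complete and the ideal closed, so the pair is complete.

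For (ii), fix a morphism of $\delta$-pairs $u : (B, J) \to (B', J')$ into a complete $q$-PD-pair. The universal property of the (non-complete) $q$-PD-envelope extends $u$ uniquely to a morphism $\widetilde u : (B^{[\,]_q}, J^{[\,]_q}) \to (B', J')$. Since $(B', J')$ is complete, the universal property of completion for morphisms of $\delta$-pairs into complete $\delta$-pairs extends $\widetilde u$ in turn to a unique morphism $\widehat{\widetilde u} : \bigl(\widehat{B^{[\,]_q}}, \overline{J^{[\,]_q}}^{\mathrm{cl}}\bigr) \to (B', J')$; this is a morphism of $\delta$-pairs because $J'$ is closed, $\widehat{\widetilde u}(J^{[\,]_q}) \subset J'$, and $\widehat{\widetilde u}$ is continuous. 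Composition with the structural map $(B, J) \to \bigl(\widehat{B^{[\,]_q}}, \overline{J^{[\,]_q}}^{\mathrm{cl}}\bigr)$ recovers $u$, and uniqueness at each stage yields the uniqueness of $\widehat{\widetilde u}$.

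I do not anticipate a genuine obstacle here: the entire technical weight was already carried by Corollaries \ref{compbound} and \ref{closcond}, which is precisely where the boundedness hypothesis intervenes. The only point worth monitoring is that the continuous extension to the completion really is a map of $\delta$-rings, but this is automatic since the $\delta$-operation on both source and target is itself characterized by continuity from a dense subring, and continuous maps commute with such unique continuous extensions.
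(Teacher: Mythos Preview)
Your proof is correct and follows essentially the same route as the paper: invoke Corollary~\ref{compbound} to get boundedness of the completion, use Lemma~\ref{formpair}(1) and Corollary~\ref{closcond} to see that the closed ideal is a $q$-PD-ideal, and then observe that the universal property is automatic from the universal properties of the $q$-PD-envelope and of completion. Your write-up is in fact more explicit than the paper's (which compresses part~(ii) into ``the universal property is then automatic''), and you correctly track the ideal $J^{[\,]_q}\widehat{B^{[\,]_q}}$ rather than merely $J\widehat{B^{[\,]_q}}$.
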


\begin{proof}
It follows from corollary \ref{compbound} that $\widehat{B^{[\ ]_{q}}}$ is bounded and in particular $(p)_{q}$-torsion free.
Moreover, $J\widehat{B^{[\ ]_{q}}}$ is a $q$-PD-ideal. 
Finally, we showed in corollary \ref{closcond} that the $q$-PD-condition is closed thus we see that $\left(\widehat{B^{[\ ]_{q}}}, \overline {J\widehat{B^{[\ ]_{q}}}}^{\mathrm {cl}}\right)$ is a complete $q$-PD-pair.
The universal property is then automatic.
\end{proof}

\begin{rmk}
This proposition will apply when $R$ itself is bounded and $B^{[\ ]_q}$ is flat since then, thanks to lemma \ref{bounded}, $B^{[\ ]_q}$ will be bounded.
\end{rmk}

There exists a complete analog to lemma \ref{paircons} but we will actually only need the following consequence:

\begin{lem} \label{tensor}
If, for $i = 1,2$, $(B, J) \to (B_{i}, J_{i})$ is a morphism of $\delta$-pairs such that each $(B_{i}, J_{i})$ admits a complete $q$-PD-envelope and
\[
B_{1}^{\widehat{[\ ]}_{q}} \widehat \otimes_B B_{2}^{\widehat{[\ ]}_{q}}
\]
is bounded, then this is the complete $q$-PD-envelope of $(B_{1}, J_{1}) \otimes_{(B,J)} (B_{2}, J_{2})$ when it is endowed with the closure of the image of
\[
J_{1}^{\overline{[\ ]}^{\mathrm {cl}}_{q}} \otimes_B B_{2}^{\widehat{[\ ]}_{q}} + B_{1}^{\widehat{[\ ]}_{q}} \otimes_B J_{2}^{\overline{[\ ]}^{\mathrm {cl}}_{q}}.
\]
\end{lem}

\begin{proof}
Corollary \ref{closcond} implies that this is a complete $q$-PD-pair and the universal property then follows automatically from the universal properties of tensor product and completion.
\end{proof}

%%%%%%%%%%%%%%%%%
\begin{xmps}
\begin{enumerate}
\item If $B$ is bounded and $J$ is a $q$-PD-ideal, then $B^{\widehat{[\ ]}_{q}} = \widehat B$.
\item If $J= B$, then $B^{\widehat{[\ ]}_{q}} = 0$. This fundamental property does not rely on proposition \ref{bound}.
It follows from example \ref{J=B} after definition \ref{q-PD-envelope}, which also shows that it is \emph{not} true before completion.
\item If $A$ is a bounded complete unramified $\delta$-$R$-algebra and if we look at the $\delta$-pair $(P, I)$, where $P := A \otimes_{R} A$ and $I$ is the kernel of multiplication $P \to A$, then $P^{\widehat{[\ ]}_{q}} = A$.
This reduces to the previous cases since $(P,I) \simeq (N, N) \times (A, 0)$.
\item If $A$ is a complete bounded $\delta$-$R$-algebra with rank one element $x$, then it follows from theorem \ref{algun} and proposition \ref{bound} that $\left(\widehat {A\langle\xi\rangle_{q}}, \overline {I^{[1]}}^{\mathrm {cl}}\right)$ is the complete $q$-PD-envelope of $(A[\xi], \xi)$.
\end{enumerate}
\end{xmps}

%%%%%%%%%%%%%%%%%%%%%%%%%%%
\section{Hyper $q$-stratifications}

We let $R$ be a $\mathbb Z[q]_{(p,q-1)}$-algebra.
Any $R$-module will be implicitly endowed with its $(p, q-1)$-adic topology and completion always means $(p, q-1)$-adic completion.

We let $A$ be a \emph{complete} $R$-algebra with a fixed \emph{topologically \'etale} (that is, formally \'etale and topologically finitely presented) \emph{coordinate} $x$.
We let $P := A \otimes_{R} A$ and we denote by $I$ the kernel of multiplication $e : P \to A$.
We denote by $p_{1}, p_{2} : A \to P$ the canonical maps and we let $\xi := p_{2}(x) - p_{1}(x) \in I$.
Unless otherwise specified, we will use the ``left'' action $f \mapsto p_{1}(f)$ of $A$ on $P$ in order to turn $P$ into an $A$-algebra.

Since $q-1$ is topologically nilpotent on $A$, which is complete and has $x$ as a topologically \'etale coordinate, there exists a unique endomorphism $\sigma$ of the $R$-algebra $A$ such that $\sigma(x) = qx$ and $\sigma \equiv \mathrm {Id}_{A} \mod q-1$.
In particular, $A$ is canonically a \emph{twisted algebra} in the sense of \cite{LeStumQuiros18b}.
We extend $\sigma$ to $P$ in an asymmetric way by setting $\sigma_{P} := \sigma \otimes \mathrm{Id}_{A}$ (we will simply write $\sigma$ when no confusion can arise).
For $n \in \mathbb N$, we let
\[
I^{(n+1)} := I\sigma(I) \cdots \sigma^{n}(I)
\]
be the $(n+1)$th \emph{twisted power} of the ideal $I$ and denote by $\overline {I^{(n+1)}}^{\mathrm {cl}}$ the closure of its image in $\widehat P$.
It is not clear if $\widehat {I^{(n+1)}} \simeq \overline {I^{(n+1)}}^{\mathrm {cl}}$ in general, but this is the case for $n=0$ because the sequence $0 \to I \to P \to A \to 0$ is split exact as a sequence of $A$-modules (and $A$ is complete).

One can show that $x$ is a \emph{topological} \emph{$q$-coordinate} on $A$ in the following sense (we denote by the same letter $\xi$ both the indeterminate and the element of $P$):

%%%%%%%%
\begin{lem} \label{qcor}
The canonical maps are bijective for all all $n\in \mathbb N$:
\[
A[\xi]/(\xi^{(n+1)}) \simeq \widehat P/\overline {I^{(n+1)}}^{\mathrm {cl}}.
\]
\end{lem}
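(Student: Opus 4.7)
The idea is to identify $\widehat P$ with the restricted power series ring $A\{\xi\}$ and to apply Weierstrass division by the monic polynomial $\xi^{(n+1)}$.

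First, because $A$ is complete and $x$ is a topologically \'etale coordinate, the canonical $A$-algebra map (with $A$ acting via $p_{1}$) $A[\xi] \to \widehat P$ sending $\xi \mapsto p_{2}(x)-p_{1}(x)$ extends to an isomorphism of complete $A$-algebras $\widehat P \simeq A\{\xi\}$, where $A\{\xi\}$ denotes the $(p,q-1)$-adic completion of $A[\xi]$. Under this identification $\overline I^{\mathrm{cl}}$ is the closed principal ideal $(\xi)$. Since $q$ is a unit in $R$ (as $q-1$ lies in the maximal ideal of the base $\mathbb Z[q]_{(p,q-1)}$), $\sigma$ is a continuous automorphism of $A$ and $\sigma_{P} := \sigma \otimes \mathrm{Id}_{A}$ a continuous automorphism of $\widehat P$; consequently $\overline{\sigma^{k}(I)}^{\mathrm{cl}}$ is the closed principal ideal $(\sigma^{k}(\xi)) = (\xi+(1-q^{k})x)$.

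Second, observe that $\xi^{(n+1)} = \prod_{k=0}^{n} \sigma^{k}(\xi)$ is monic of degree $n+1$ in $\xi$, and all of its non-leading coefficients lie in $(q-1)A$, hence are topologically nilpotent. This is exactly the condition that makes a Weierstrass-type division converge in $A\{\xi\}$: starting from any $f \in A\{\xi\}$, iteratively substitute $\xi^{n+1} \equiv -\sum_{k=0}^{n} c_{k}\xi^{k} \pmod{\xi^{(n+1)}}$ (where $\xi^{(n+1)} = \xi^{n+1} + \sum c_{k}\xi^{k}$); each round introduces an extra factor in $(q-1)A$, so the remainder converges adically. One thus obtains a unique decomposition $f = \xi^{(n+1)}g + r$ with $g \in A\{\xi\}$ and $r \in A[\xi]$ of degree at most $n$. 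In particular, the principal ideal $(\xi^{(n+1)}) \subset A\{\xi\}$ is closed, and the natural map $A[\xi]/(\xi^{(n+1)}) \to A\{\xi\}/(\xi^{(n+1)})$ is an isomorphism.

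Finally, combining these facts, $\overline{I^{(n+1)}}^{\mathrm{cl}} = \overline{I\,\sigma(I)\cdots\sigma^{n}(I)}^{\mathrm{cl}}$ contains $\xi^{(n+1)}$ (so contains the ideal $(\xi^{(n+1)})$) and is contained in the (now known to be closed) product of principal ideals $\prod_{k=0}^{n}(\xi+(1-q^{k})x) = (\xi^{(n+1)})$; hence $\overline{I^{(n+1)}}^{\mathrm{cl}} = (\xi^{(n+1)})$, and the preceding step yields the required isomorphism $A[\xi]/(\xi^{(n+1)}) \simeq \widehat P/\overline{I^{(n+1)}}^{\mathrm{cl}}$. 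The main difficulty is verifying the identification $\widehat P \simeq A\{\xi\}$ from the topologically \'etale hypothesis; a fallback avoiding this direct identification is an induction on $n$ via the five-lemma applied to the two natural short exact sequences with quotients $A[\xi]/(\xi^{(n)})$ and $\widehat P/\overline{I^{(n)}}^{\mathrm{cl}}$, whose inductive step reduces to showing that $\overline{I^{(n)}}^{\mathrm{cl}}/\overline{I^{(n+1)}}^{\mathrm{cl}}$ is a free $A$-module of rank one on $\xi^{(n)}$.
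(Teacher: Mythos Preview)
Your main approach rests on the identification $\widehat P \simeq A\{\xi\}$, and this is simply false in general. Take $p$ odd and $A$ the completion of $R[x,y]/(y^{2}-x)$: then $x$ is a topologically \'etale coordinate, but modulo $(p,q-1)$ one computes $P_{0}=A_{0}[\xi,y_{2}]/(y_{2}^{2}-x-\xi)$, which is free of rank two over $A_{0}[\xi]$, not rank one. In particular $\widehat P$ is not $A\{\xi\}$, and the diagonal ideal is not principal: the element $y_{2}-y=1\otimes y-y\otimes 1$ lies in $I$ but not in $(\xi)$. So both the global identification and the claim $\overline I^{\mathrm{cl}}=(\xi)$ fail, and the Weierstrass division step never gets off the ground. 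The \'etale hypothesis tells you that $A[\xi]\to P$ is topologically \'etale, not that it is an isomorphism.

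The paper's argument avoids any global description of $\widehat P$. It uses formal \'etaleness in the opposite direction: since $\xi$ is topologically nilpotent in $A[\xi]/(\xi^{(n+1)})$ (because $\xi^{n+1}\equiv\xi^{(n+1)}\bmod q-1$) and $A[\xi]\to P$ is formally \'etale, the identity on $A[\xi]$ lifts uniquely to an $A$-algebra map $u:\widehat P\to A[\xi]/(\xi^{(n+1)})$. One then checks, again by uniqueness of \'etale lifts, that $v_{n}\circ u$ is the canonical projection, so $u$ induces the desired inverse. Your fallback via the five-lemma would require showing that $\overline{I^{(n)}}^{\mathrm{cl}}/\overline{I^{(n+1)}}^{\mathrm{cl}}$ is free of rank one on $\xi^{(n)}$; but proving that the class of $y_{2}-y$ (and its analogues) becomes an $A$-multiple of $\xi^{(n)}$ modulo $I^{(n+1)}$ already needs an \'etale lifting argument of exactly the kind the paper uses, so the fallback does not sidestep the issue.
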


\begin{proof}
We fix some $n \in \mathbb N$.
We have $\xi^{n+1} \equiv \xi^{(n+1)} \mod q-1$ and $\xi$ is therefore topologically nilpotent in $A[\xi]/(\xi^{(n+1)})$.
Since $\xi$ is a topological coordinate for $P$ and $A[\xi]/(\xi^{(n+1)})$ is complete, there exists a unique morphism $U$ below making the diagram commute:
\[
\xymatrix{P \ar@{->>}[r]^e \ar@{-->}[rd]^U & A \\ A[\xi] \ar[u] \ar@{->>}[r] & A[\xi]/(\xi^{(n+1)}). \ar@{->>}[u]}
\]
This morphism $U$ is necessarily surjective and extends by continuity to a surjective morphism
\[
\widehat U : \widehat P \to A[\xi]/(\xi^{(n+1)}).
\]
By construction, we have $U(I) \subset (\xi)$ and therefore $U(I^{(n+1)}) = 0$ so that $\widehat U\left(\overline {I^{(n+1)}}^{\mathrm {cl}}\right)= 0$.
Thus, $\widehat U$ induces a surjective map
\[
u : \widehat P/\overline {I^{(n+1)}}^{\mathrm {cl}} \to A[\xi]/(\xi^{(n+1)}).
\]
Now, if we compose $U$ with the canonical map
\[
v : A[\xi]/(\xi^{(n+1)}) \to \widehat P/\overline {I^{(n+1)}}^{\mathrm {cl}}
\]
and the projection onto $A$, we recover $e : P \to A$.
Using again the fact that $\xi$ is a topological coordinate for $P$ over $A$, since $\widehat P/\overline {I^{(n+1)}}^{\mathrm {cl}}$ is complete and $\widehat I$ consists of topologically nilpotent elements modulo $\overline {I^{(n+1)}}^{\mathrm {cl}}$, we see that $v \circ U$ is necessarily the projection onto $\widehat P/\overline {I^{(n+1)}}^{\mathrm {cl}}$.
It follows that $u$ is an inverse for $v$.
\end{proof}

Let us consider now the canonical map $A[\xi] \to \widehat {A\langle\xi\rangle_{q}}$ from section \ref{deltwist}.
Since the image $(k)_{q}!\xi^{[k]}$ of $\xi^{(k)}$ goes to zero when $k$ goes to infinity, we see that this map factors uniquely through the \emph{twisted power series ring}
\[
A[[\xi]]_{q} := \varprojlim A[\xi]/(\xi^{(n+1)}).
\]

%%%%%%%%%%%%%%%
\begin{dfn} \label{qTay}
The \emph{$q$-Taylor map (of level zero)} is the composite
\[
\xymatrix@R=0cm{
\theta : A \ar[r]^-{p_{2}} & P \ar[r] & \varprojlim \widehat P/\overline {I^{(n+1)}}^{\mathrm {cl}} \ar[r]^-\simeq & A[[\xi]]_{q} \ar[r] & \widehat {A\langle\xi\rangle}_{q}.
}
\]
\end{dfn}

We may still denote by the letter $\theta$ the map so defined, but with the arrow stopping at $\varprojlim \widehat P/\overline {I^{(n+1)}}^{\mathrm {cl}}$, $ A[[\xi]]_{q} $ or $\widehat {A\langle\xi\rangle}_{q}$, if no confusion could result.
Notice that we always have $\theta(x) = x + \xi$.

We will need to be able to move between the canonical (left) structure of $\widehat {A\langle\xi\rangle}_{q}$ as an $A$-module and its (right) structure through the map $\theta$ and we first prove the following technical result:

%%%%%%%%%%%%%%%
\begin{lem} \label{bigdeal}
If we denote by
\[
\tau : A[\xi] \to A[[\xi]]_{q}, \quad \left\{\begin{array}{l}f \mapsto \theta(f) \ \mathrm{for}\ f \in A\\ \xi \mapsto -\xi. \end{array}\right.
\]
the \emph{flip map} (which is a morphism of $R$-algebras), then we have for all $n > 0$,
\[
\tau\left(\xi^{(n)}\right) = \sum_{k =1}^n (-1)^k q^{\frac{k (k -1)}2} (1-q)^{n-k}(n-k)_{q}!{n-1 \choose k-1}_{q} {n \choose k }_{q} x^{n-k }\xi^{(k )}.
\]
\end{lem}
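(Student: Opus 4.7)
\begin{pf}[Proof proposal.]
The plan is induction on $n$ using the recurrence $\xi^{(n+1)} = \xi^{(n)}(\xi + (1-q^n)x)$ satisfied by the twisted powers. The two values $\tau(x) = \theta(x) = x + \xi$ and $\tau(\xi) = -\xi$ together give
\[
\tau(\xi + (1-q^n)x) = -\xi + (1-q^n)(x+\xi) = (1-q^n)x - q^n\xi.
\]
(Equivalently, $\tau$ interchanges $x$ with $X := x+\xi$, since $\tau(X) = \tau(x) + \tau(\xi) = x$.) The base case $n=1$ amounts to $\tau(\xi) = -\xi$, which matches the asserted formula for $k=1$.

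For the inductive step, write $c_{n,k}$ for the coefficient appearing in the statement and apply $\tau$ to the recurrence. Substituting the inductive hypothesis yields
\[
\tau(\xi^{(n+1)}) = \left(\sum_{k=1}^n c_{n,k}\, x^{n-k}\xi^{(k)}\right)\bigl((1-q^n)x - q^n\xi\bigr).
\]
The product $\xi^{(k)}\xi$ does not lie in the basis $\{x^j\xi^{(i)}\}$ directly, but applying the defining recurrence in reverse gives $\xi^{(k)}\xi = \xi^{(k+1)} - (1-q^k)x\xi^{(k)}$. Distributing, substituting this rewriting, reindexing the sum involving $\xi^{(k+1)}$, and collecting the coefficients of $x^{n+1-k}\xi^{(k)}$ reduces the inductive step to the single coefficient identity (using the convention $c_{n,0} = c_{n,n+1} = 0$)
\[
c_{n,k}(1-q^{n+k}) - q^n c_{n,k-1} = c_{n+1,k} \qquad (1 \leq k \leq n+1).
\]

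The main obstacle is verifying this $q$-binomial identity. After clearing factorials by means of $(n-k)_{q}!{n-1 \choose k-1}_{q} = (n-1)_{q}!/(k-1)_{q}!$, it should reduce to a polynomial equality in $q$ that follows from the standard $q$-Pascal rule ${n+1 \choose k}_{q} = {n \choose k-1}_{q} + q^k{n \choose k}_{q}$. As a sanity check, and perhaps a cleaner alternative route, one can observe that $\xi^{(n)} = \prod_{i=0}^{n-1}(X - q^i x)$, so that
\[
\tau(\xi^{(n)}) = \prod_{i=0}^{n-1}(x - q^i X),
\]
then expand by the $q$-binomial theorem and substitute $X^k = \sum_{j=0}^k {k \choose j}_{q}x^{k-j}\xi^{(j)}$ (proved by a short induction using $X\xi^{(j)} = \xi^{(j+1)} + q^j x\xi^{(j)}$), before applying a $q$-Chu--Vandermonde type sum to collapse the resulting double sum into the stated closed form.
\end{pf}
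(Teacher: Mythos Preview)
Your primary inductive argument is correct and complete up to the coefficient identity, which you leave as ``should follow from $q$-Pascal''. After the simplification $(n-k)_{q}!{n-1\choose k-1}_{q}=(n-1)_{q}!/(k-1)_{q}!$ that you suggest, the identity $c_{n,k}(1-q^{n+k})-q^nc_{n,k-1}=c_{n+1,k}$ boils down to
\[
(n+k)_{q}(n-k+1)_{q}+q^{\,n-k+1}(k-1)_{q}(k)_{q}=(n)_{q}(n+1)_{q},
\]
which is easily checked by writing $(m)_{q}=(q^m-1)/(q-1)$ and expanding; it is not a direct instance of $q$-Pascal, but it is elementary. So the induction goes through.

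This route is genuinely different from the paper's. The paper does not induct: it writes $\tau(\xi^{(n)})=\prod_{k=0}^{n-1}(x-q^k(x+\xi))$, expands by the twisted binomial formula, inserts $(x+\xi)^m=\sum_{k}{m\choose k}_{q}x^{m-k}\xi^{(k)}$, and then collapses the resulting double sum by a second application of the twisted binomial formula (recognising $\sum_{l}(-1)^lq^{l(l-1)/2}{n-k\choose l}_{q}q^{kl}=\prod_{i=0}^{n-k-1}(1-q^{k+i})$). Your ``alternative route'' is exactly this argument, including the Chu--Vandermonde-type collapse. The paper's approach yields the closed form in one sweep and exhibits the inner sum as a recognisable product; your induction is more mechanical but avoids juggling a double sum and instead pushes all the work into a single scalar identity in $\mathbb Z[q]$.
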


\begin{proof}
Recall first that we have
\[
\xi^{(n)} = \prod_{k =0}^{n-1}(\xi + (1-q^k )x).
\]
Since $\tau(x) = x + \xi$ and $\tau(\xi) = -\xi$, we will have for all $k =1, \ldots, n-1$,
\[
\tau(\xi + (1-q^k )x) = -\xi + (1-q^k )(x+\xi) = x - q^k (x+\xi),
\]
and it follows that
\[
\tau\left(\xi^{(n)}\right) = \prod_{k =0}^{n-1}(x - q^k (x+\xi)).
\]
We may then apply the twisted binomial formula
\begin{equation} \label{bin}
\prod_{k =0}^{n-1}(q^k X + Y) = \sum_{m=0}^{n} q^{\frac{m(m-1)}2}{n \choose m}_{q} X^mY^{n-m}
\end{equation}
from proposition 2.14 of \cite{LeStumQuiros15} with $X=-(x+\xi)$ and $Y=x$ and obtain 
\[
\tau\left(\xi^{(n)}\right) = \sum_{m=0}^{n} (-1)^m q^{\frac{m(m-1)}2}{n \choose m}_{q} (x+\xi)^mx^{n-m}.
\]
Using our formula
\[
(x+\xi)^m = \sum_{k =0}^m {m \choose k }_{q} x^{m-k }\xi^{(k )}
\]
from lemma 7.1 of \cite{GrosLeStumQuiros19}, we obtain
\[
\tau\left(\xi^{(n)}\right) = \sum_{m =0}^{n} (-1)^m q^{\frac{m(m-1)}2}{n \choose m}_{q} \left( \sum_{k =0}^m {m \choose k }_{q} x^{m-k }\xi^{(k )}\right)x^{n-m}.
\]
Making the change of indices $m=k +l$, we have
\[
\frac{m(m-1)}2 = \frac{k (k -1)}2 + \frac{l(l-1)}2 + k l
\]
and
\[
{n \choose m}_{q} {m \choose k }_{q} = {n-k \choose l}_{q} {n \choose k }_{q}.
\]
It follows that
\[
\tau\left(\xi^{(n)}\right) = \sum_{k =0}^n (-1)^k q^{\frac{k (k -1)}2} {n \choose k }_{q} \left(\sum_{l=0}^{n-k } (-1)^l q^{ \frac{l(l-1)}2} {n-k \choose l}_{q} q^{k l}\right) x^{n-k }\xi^{(k )}.
\]
We may call again our twisted binomial formula \eqref{bin} in the case $X=q^k$ and $Y = -1$ (and also $n$ replaced with $n-k $) which gives
\begin{align*}
\prod_{i=0}^{n-k -1}(1-q^{k+i}) &= (-1)^{n-k }\prod_{i=0}^{n-k-1}(q^iq^k-1)
\\&= (-1)^{n-k } \sum_{l=0}^{n-k } q^{\frac{l(l-1)}2}{n-k \choose l}_{q} (q^k )^l(-1)^{n-k -l}
\\& = \sum_{l=0}^{n-k }(-1)^{l} q^{\frac{l(l-1)}2}{n-k \choose l}_{q} q^{kl}.
\end{align*}
On the other hand, we also have
\[
\prod_{i=0}^{n-k -1}(1-q^{k+i}) = \prod_{i=0}^{n-k -1}(1-q)(k+i)_{q} = (1-q)^{n-k}(n-k)_{q}!{n-1 \choose k-1}_{q},
\]
so finally (since this is $0$ when $k=0$)
\[
\tau\left(\xi^{(n)}\right) = \sum_{k =1}^n (-1)^k q^{\frac{k (k -1)}2} (1-q)^{n-k}(n-k)_{q}!{n-1 \choose k-1}_{q} {n \choose k }_{q} x^{n-k }\xi^{(k )}. \qedhere
\]
\end{proof}

%\newpage

%%%%%%%%%%%%%%%%
\begin{prop} \label{flippf}
The $R$-linear \emph{flip} map $\tau : \widehat {A \langle \xi \rangle}_{q} \to \widehat {A \langle \xi \rangle}_{q}$ defined by
\[
f\xi^{[n]} \mapsto \theta(f)\sum_{k =1}^n (-1)^k q^{\frac{k (k -1)}2} (1-q)^{n-k}{n-1 \choose k-1}_{q} x^{n-k }\xi^{[k]}
\]
is an $R$-algebra automorphism of $\widehat {A \langle \xi \rangle}_{q}$ such that $\tau \circ \tau = \mathrm{Id}_{\widehat {A \langle \xi \rangle}_{q}}$.
\end{prop}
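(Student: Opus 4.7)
The strategy is to construct $\tau$ abstractly via the universal property of the complete $q$-PD-envelope, and then match the explicit formula using Lemma \ref{bigdeal}.

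By the fourth example after Proposition \ref{bound} (applying Theorem \ref{algun}), $\left(\widehat{A\langle\xi\rangle}_q, \overline{I^{[1]}}^{\mathrm{cl}}\right)$ is the complete $q$-PD-envelope of $(A[\xi], (\xi))$. I would first check that the flip $A[\xi] \to A[[\xi]]_q$ of Lemma \ref{bigdeal}, composed with the canonical map $A[[\xi]]_q \to \widehat{A\langle\xi\rangle}_q$, is a morphism of $\delta$-pairs $(A[\xi], (\xi)) \to \left(\widehat{A\langle\xi\rangle}_q, \overline{I^{[1]}}^{\mathrm{cl}}\right)$. Compatibility with $\delta$ on $A$ expresses that the $q$-Taylor map $\theta$ is a $\delta$-ring morphism (as an extension of $p_2 : A \to P$), while compatibility on $\xi$ reduces to the identity $\widetilde\tau(\delta(\xi)) = \delta(-\xi)$. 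Using the symmetric formula $\delta(\xi) = ((x+\xi)^p - x^p - \xi^p)/p$ from \eqref{xiform}, one has $\widetilde\tau(\delta(\xi)) = (x^p - (x+\xi)^p - (-\xi)^p)/p$; combined with $\phi(-u) = -\phi(u)$ (which yields $\delta(-u) = -\delta(u)$ for $p$ odd and $\delta(-u) = -\delta(u) - u^2$ for $p=2$), this matches $\delta(-\xi)$ in both cases. The universal property then provides a unique $\delta$-ring endomorphism $\tau$ of $\widehat{A\langle\xi\rangle}_q$ that is $R$-linear, restricts to $\theta$ on $A$, and sends $\xi \mapsto -\xi$.

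To check that $\tau$ satisfies the stated formula on the $\xi^{[n]}$, I would apply it to $\xi^{(n)} = (n)_q!\, \xi^{[n]}$: by Lemma \ref{bigdeal}, $\tau(\xi^{(n)})$ equals the expression displayed there. Rewriting each $\xi^{(k)} = (k)_q!\, \xi^{[k]}$ on the right-hand side and using $(n-k)_q!\,(k)_q!\,\binom{n}{k}_q = (n)_q!$, one obtains that $(n)_q!\, \tau(\xi^{[n]})$ equals $(n)_q!$ times the formula claimed in the proposition. To cancel $(n)_q!$, I would reduce to the universal case $R = \mathbb Z[q]_{(p,q-1)}$, $A = R[x]$, where $\widehat{A\langle\xi\rangle}_q$ is torsion-free over the integral domain $R$ (being the $(p,q-1)$-adic completion of the free $R$-module $\bigoplus_n A\,\xi^{[n]}$); hence $(n)_q! \in R$ is a non-zero-divisor there, and the general case follows by base change.

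For the involution property, both $\tau \circ \tau$ and the identity are $\delta$-ring endomorphisms of $\widehat{A\langle\xi\rangle}_q$, and their restrictions to $A[\xi]$ agree on the ring generators: indeed $\tau(\tau(x)) = \tau(\theta(x)) = \tau(x + \xi) = (x+\xi) + (-\xi) = x$ and $\tau(\tau(\xi)) = \tau(-\xi) = \xi$. Consequently they induce the same morphism of $\delta$-pairs $(A[\xi], (\xi)) \to \left(\widehat{A\langle\xi\rangle}_q, \overline{I^{[1]}}^{\mathrm{cl}}\right)$ and coincide globally by the uniqueness clause of the universal property; in particular $\tau$ is an $R$-algebra automorphism. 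The two delicate steps in this plan are the $\delta$-compatibility of the flip (handled by the short Frobenius computation above) and the cancellation of $(n)_q!$ (handled by reduction to the flat universal setting).
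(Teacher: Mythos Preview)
Your route is genuinely different from the paper's. The paper never invokes the $q$-PD-envelope: it takes the explicit formula as the \emph{definition} of $\tau$, reduces by functoriality to $R=\mathbb Z[q]_{(p,q-1)}$, $A=R[x]$, and then checks multiplicativity and the involution by passing from $\xi^{[n]}$ to $\xi^{(n)}$ and quoting Lemma~\ref{bigdeal} (where the flip on $A[\xi]\to A[[\xi]]_q$ is a ring map by construction). Your plan instead produces $\tau$ abstractly from the universal property of the complete $q$-PD-envelope and then identifies it with the formula; this makes $\tau\circ\tau=\mathrm{Id}$ a one-line consequence of uniqueness, which is elegant.

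There is, however, a real context mismatch you should fix. In Section~\ref{hypersec} the ring $R$ is only assumed to be a $\mathbb Z[q]_{(p,q-1)}$-algebra and $A$ a complete $R$-algebra with a topologically \'etale coordinate; no $\delta$-structure, no boundedness. The statement you invoke (example~4 after Proposition~\ref{bound}, via Theorem~\ref{algun}) needs $R$ a $\delta$-ring and $A$ a bounded $\delta$-$R$-algebra with $x$ of rank one, so as written your very first sentence does not apply to the $A$ of the proposition. The repair is exactly the reduction you already perform for cancelling $(n)_q!$: move it to the top of the argument. Once you are in the universal case, all the $\delta$-machinery is available (the Frobenius computation you give for $\delta(\xi)$ versus $\delta(-\xi)$ is correct, and $\theta$ is a $\delta$-map since $x\mapsto x+\xi$ sends a rank one element to a rank one element), and the explicit formula then base-changes back to arbitrary $(R,A)$. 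You should also note, for the uniqueness step, that your abstractly constructed $\tau$ preserves the augmentation ideal (e.g.\ because $e\circ\tau$ and $e$ agree on $A[\xi]$), so that $\tau\circ\tau$ is a morphism of $\delta$-\emph{pairs} and the universal property applies.

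In short: the paper's approach is more elementary (no $\delta$-input at all, everything reduces to the ring-theoretic Lemma~\ref{bigdeal}), while yours is more structural and makes the involution automatic; but you must front-load the reduction to the universal $\delta$-setting rather than invoking the $q$-PD-envelope for a general $A$ that has no $\delta$-structure.
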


\begin{proof}
In order to show that this is a morphism of rings, it is sufficient to check that
\[
\forall m,n \in \mathbb N, \quad \tau\left(\xi^{[n]}\xi^{[m]}\right)= \tau\left(\xi^{[n]}\right)\tau\left(\xi^{[m]}\right).
\]
By functoriality, we may assume that $R= \mathbb Z[q]_{(p,q-1)}$ and $A = \widehat{R[x]}$ so that $A$ has no torsion and it is then sufficient to show the equality
\[
\forall m,n \in \mathbb N, \quad \tau\left(\xi^{(n)}\xi^{(m)}\right)= \tau\left(\xi^{(n)}\right)\tau\left(\xi^{(m)}\right).
\]
We are therefore led to prove the similar equality for the map $\tau$ of lemma \ref{bigdeal}, but this is a morphism of rings by definition.

Let us now show that the composite map $\tau \circ \tau$ is $A$-linear, or, equivalently, that the composite map $\tau \circ \theta : A \to \widehat {A \langle \xi \rangle}_{q}$ is the canonical map.
In order to do so, we may first remark that, the map $\tau$ of lemma \ref{bigdeal} sends the ideal $(\xi^{(n)})$ into the ideal $(\xi^{(n-k)}),(q-1)^k)$ when $n \geq k$.
It follows that $\tau$ induces, for all $k \in \mathbb N$, a morphism
\[
A[[\xi]]_{q} = \varprojlim_{n \geq k} A[\xi]/(\xi^{(n)}) \to \varprojlim_{n \geq k} A[\xi]/(\xi^{(n-k)},(q-1)^k) = A[[\xi]]_{q}/(q-1)^k
\]
and, taking the limit on $k$, a ring endomorphism of $A[[\xi]]_{q}$ that we will still denote by $\tau$.
This allows us to decompose $\tau \circ \theta$ as the composite
\[
A \overset \theta \to A[[\xi]]_{q} \overset \tau \to A[[\xi]]_{q} \to \widehat {A \langle \xi \rangle}_{q}.
\]
Since $\xi$ is topologically nilpotent on $A[[\xi]]_{q}$ and $x$ is a topologically \'etale coordinate on $A$, our claim then follows from the fact that
\[
\tau(\theta(x)) = \tau(x + \xi) = (x + \xi) - \xi = x.
\]
It order to prove that $\tau \circ \tau$ is the identity, it is now sufficient to show the equality $(\tau \circ \tau)\left(\xi^{[n]}\right) = \xi^{[n]}$.
As above, we may assume that $A$ has no torsion and we are reduced to showing that $(\tau \circ \tau)(\xi^{(n)}) = \xi^{(n)}$.
This follows again from lemma \ref{bigdeal} because $(\tau \circ \tau)(\xi) = \tau(-\xi) = \xi$ and $\tau \circ \tau$ is $A$-linear.
\end{proof}

\begin{rmks}
\begin{enumerate}
\item \label{rmk1} The flip map $\tau$ exchanges the canonical map with $\theta$ in the sense that $\tau \circ \theta = \mathrm{can}$ and $\tau \circ \mathrm{can} = \theta$. 
\item
The flip map also extends to an automorphism of
\begin{equation} \label{Axiq}
A \langle\langle \xi \rangle\rangle_{q} := \varprojlim A \langle \xi \rangle_{q}/I^{[n+1]} = \varprojlim \widehat{A \langle \xi \rangle_{q}}/\widehat{I^{[n+1]}}
\end{equation}
because $\tau$ sends $\widehat{I^{[n+k]}}$ into $\widehat {I^{[n]}} + (q-1)^k\widehat {A \langle \xi \rangle}_{q}$.
\item
Actually, the same holds with $A[[\xi]]_{q}$ and there exists a commutative diagram
\[
\xymatrix{
f\otimes g \ar@{|->}[d] &P \ar[r] \ar[d]^\simeq & A[[\xi]]_{q} \ar[r] \ar[d]^\simeq & \widehat {A\langle\xi\rangle}_{q} \ar@{^{(}->}[r] \ar[d]^\simeq & A \langle\langle \xi \rangle\rangle_{q} \ar[d]^\simeq
\\
g \otimes f &P \ar[r] & A[[\xi]]_{q} \ar[r] & \widehat {A\langle\xi\rangle}_{q} \ar@{^{(}->}[r] & A \langle\langle \xi \rangle\rangle_{q}
}
\]
of \emph{flip} maps, all of which we may denote by $\tau$.
Note that the left hand square commutes thanks to remark \ref{rmk1}.
\end{enumerate}
\end{rmks}

We will also let $e : \widehat{A\langle\xi\rangle_{q}} \to A$ be the augmentation map and
\[
p_{1}, p_{2} : \widehat{A\langle\xi\rangle_{q}} \to \widehat{A\langle\xi\rangle_{q}} \widehat\otimes'_{A} \widehat{A\langle\xi\rangle_{q}}
\]
be the obvious maps (we use the notation $\otimes'$ to indicate that we use the $q$-Taylor map $\theta$ for the $A$-structure on the left hand side).

%%%%%%%%%%%%%
\begin{prop} \label{diagprop}
The \emph{diagonal} map
\[
\xymatrix@R=0cm{
\widehat{A\langle\xi\rangle_{q}} \ar[r]^-\Delta & \widehat{A\langle\xi\rangle_{q}} \widehat \otimes'_{A} \widehat{A\langle\xi\rangle_{q}} \\
\xi^{[n]} \ar@{|->}[r] & \sum_{i+j=n} \xi^{[i]} \otimes \xi^{[j]}
}
\]
is a morphism of $A$-algebras (for the left $A$-module structures).
\end{prop}

\begin{proof}
We have to prove that
\[
\forall m, n \in \mathbb N, \quad \Delta\left(\xi^{[n]}\xi^{[m]}\right) = \Delta \left(\xi^{[n]}\right)\Delta\left(\xi^{[m]}\right).
\]
By functoriality, we may clearly assume that $R= \mathbb Z[q]_{(p,q-1)}$ and $A = \widehat{R[x]}$.
Since now $A$ has no torsion, it is sufficient to prove that
\[
\forall m, n \in \mathbb N, \quad \Delta\left(\xi^{(n)}\xi^{(m)}\right) = \Delta(\xi^{(n)})\Delta(\xi^{(m)}).
\]
This follows from theorem 3.5 of \cite{LeStumQuiros18}: the diagonal map
\[
\xymatrix@R=0cm{
R[x,\xi] \ar[r]^-\Delta & R[x,\xi] \otimes'_{A} R[x,\xi] 
\\
\xi^{(n)} \ar@{|->}[r] &\sum_{i+j=n} {n \choose i}_{q} \xi^{(i)} \otimes \xi^{(j)}
}
\]
(where $\otimes'_{A}$ is built using $x \mapsto x + \xi$ on the left) is a ring homomorphism.
\end{proof}

%%%%%%%%%%%%%%%%%
\begin{rmks}
\begin{enumerate}
\item
The same formula also defines a diagonal map
\[
\Delta : A \langle\langle \xi \rangle\rangle_{q} \to A \langle\langle \xi \rangle\rangle_{q} \widehat \otimes'_{A} A \langle\langle \xi \rangle\rangle_{q}
\]
with $A \langle\langle \xi \rangle\rangle_{q}$ as in \eqref{Axiq}.
\item
Actually, this also holds with $A[[\xi]]_{q}$ and the diagram
\[
\xymatrix{
P \ar[r] \ar[d]^\Delta & A[[\xi]]_{q} \ar[r] \ar[d]^\Delta & \widehat {A\langle\xi\rangle}_{q} \ar@{^{(}->}[r] \ar[d]^\Delta & A \langle\langle \xi \rangle\rangle_{q} \ar[d]^\Delta
\\
P \otimes'_{A} P \ar[r] & A[[\xi]]_{q} \widehat \otimes'_{A} A[[\xi]]_{q} \ar[r] & \widehat {A\langle\xi\rangle}_{q} \widehat\otimes'_{A} \widehat {A\langle\xi\rangle}_{q}\ar@{^{(}->}[r] & A \langle\langle \xi \rangle\rangle_{q} \widehat \otimes'_{A} A \langle\langle \xi \rangle\rangle_{q},
}
\]
where the left vertical map is given by $f \otimes g \mapsto f \otimes 1 \otimes g$, is commutative.
For the left hand square, this follows from the formula given in theorem 3.5 of \cite{LeStumQuiros18} in the case $A = \widehat {R[x]}$.
In general, it is sufficient, since $x$ is a topologically \'etale coordinate, to check the commutativity modulo $(p,q-1)$.
But then $\xi$ becomes nilpotent and we can work modulo $\xi$ (or more precisely modulo $(1 \otimes \xi, \xi \otimes 1)$) in which case the assertion is trivial.
\end{enumerate}
\end{rmks}

We now show that $\left(A, \widehat {A\langle\xi\rangle}_{q}, \theta, \mathrm{can}, e, \Delta\right)$ and $\tau$ satisfy properties similar (because we use completions) to those of a \emph{formal groupoid} in the sense of definition 1.1.3 in chapter II of \cite{Berthelot74} (we follow the same order for the statements):

%%%%%%%%%%%%%%
\begin{prop} \label{groupoid}
The following diagrams are commutative:
\begin{enumerate}
\item
\[
\xymatrix{
A \ar[r] \ar@{=}[rd] & \widehat {A\langle\xi\rangle}_{q} \ar[d]^e
\\
& A
}
\quad \mathrm{and} \quad
\xymatrix{
A \ar[r]^-\theta \ar@{=}[rd] & \widehat {A\langle\xi\rangle}_{q} \ar[d]^e
\\
& A
}
\]
\item
\[
\xymatrix{
A \ar[r] \ar[d] & \widehat {A\langle\xi\rangle}_{q} \ar[d]^\Delta
\\
\widehat {A\langle\xi\rangle}_{q} \ar[r]^-{p_{1}} & \widehat {A\langle\xi\rangle}_{q} \widehat \otimes'_{A} \widehat {A\langle\xi\rangle}_{q}
}
\quad \mathrm{and} \quad
\xymatrix{
A \ar[r]^\theta \ar[d]^\theta & \widehat {A\langle\xi\rangle}_{q} \ar[d]^\Delta
\\
\widehat {A\langle\xi\rangle}_{q} \ar[r]^-{p_{2}} & \widehat {A\langle\xi\rangle}_{q} \widehat \otimes'_{A} \widehat {A\langle\xi\rangle}_{q}
}
\]
\item
\[
\xymatrix{
\widehat {A\langle\xi\rangle}_{q} \ar[r]^-\Delta \ar@{=}[rd] & \widehat {A\langle\xi\rangle}_{q} \widehat \otimes'_{A} \widehat {A\langle\xi\rangle}_{q} \ar[d]^{e \otimes \mathrm{Id}}
\\ & \widehat {A\langle\xi\rangle}_{q}
}
\quad \mathrm{and} \quad
\xymatrix{
\widehat {A\langle\xi\rangle}_{q} \ar[r]^-\Delta \ar@{=}[rd] & \widehat {A\langle\xi\rangle}_{q} \widehat \otimes'_{A} \widehat {A\langle\xi\rangle}_{q} \ar[d]^{\mathrm{Id} \otimes e}
\\ & \widehat {A\langle\xi\rangle}_{q}
}
\]
\item \label{assoc}
\[
\xymatrix{
\widehat {A\langle\xi\rangle}_{q} \ar[r]^-\Delta \ar[d]^\Delta & \widehat {A\langle\xi\rangle}_{q} \widehat \otimes'_{A} \widehat {A\langle\xi\rangle}_{q} \ar[d]^{\Delta \otimes \mathrm{Id}}
\\ \widehat {A\langle\xi\rangle}_{q} \widehat \otimes'_{A} \widehat {A\langle\xi\rangle}_{q} \ar[r]^-{\mathrm{id} \otimes \Delta} & \widehat {A\langle\xi\rangle}_{q} \widehat \otimes'_{A}\widehat {A\langle\xi\rangle}_{q} \widehat \otimes'_{A} \widehat {A\langle\xi\rangle}_{q}
}
\]
\item
\[
\xymatrix{
A \ar[d] \ar[rd]^\theta &
\\ \widehat {A\langle\xi\rangle}_{q} \ar[r]^\tau& \widehat {A\langle\xi\rangle}_{q}
}
\quad \mathrm{and} \quad
\xymatrix{
A \ar[d]^\theta \ar[rd] &
\\ \widehat {A\langle\xi\rangle}_{q} \ar[r]^\tau& \widehat {A\langle\xi\rangle}_{q}
}
\]
\item
\[
\xymatrix{
\widehat {A\langle\xi\rangle}_{q} \ar[r]^\tau \ar[rd]^e & \widehat {A\langle\xi\rangle}_{q} \ar[d]^e
\\
& A
}
\]
\item
\[
\xymatrix{
\widehat {A\langle\xi\rangle}_{q} \ar[r]^e \ar[d]^\Delta & A \ar[d]
\\ \widehat {A\langle\xi\rangle}_{q} \widehat \otimes'_{A} \widehat {A\langle\xi\rangle}_{q} \ar[r]^-{\mathrm{Id} \times \tau} & \widehat {A\langle\xi\rangle}_{q}}
\quad \mathrm{and} \quad
\xymatrix{
\widehat {A\langle\xi\rangle}_{q} \ar[r]^e \ar[d]^\Delta & A \ar[d]^\theta
\\ \widehat {A\langle\xi\rangle}_{q} \widehat \otimes'_{A} \widehat {A\langle\xi\rangle}_{q} \ar[r]^-{\tau \times \mathrm{Id}} & \widehat {A\langle\xi\rangle}_{q}.}
\]
\end{enumerate}
\end{prop}

\begin{proof}
Only the last assertion needs a proof, the other ones following either directly from the definitions or from what we already proved.
For example, the commutativity of the right hand square in assertion (2) follows from the second remark before the proposition.
Now, we consider the last assertion and since both proofs are similar, we only do the first diagram.
We have to show that
\[
\forall n > 0, \quad (\mathrm{Id} \times \tau)\left(\Delta \left(\xi^{[n]}\right)\right) = 0.
\]
As usual, we can reduce by functoriality to showing that $(\mathrm{Id} \otimes \tau)(\Delta(\xi^{(n)})) = 0$ and this results from the commutativity of the diagram
\[
\xymatrix{
P \ar[r]^e \ar[d]^\Delta & A \ar[d]
\\ P \otimes'_{A}P\ar[r]^-{\mathrm{Id} \times \tau} & P
}
,\quad
\xymatrix{
f \otimes g \ar@{|->}[r]^e \ar@{|->}[d]^\Delta & fg \ar@{|->}[d]
\\ f \otimes 1 \otimes g\ar@{|->}[r]^-{\mathrm{Id} \times \tau} & fg \otimes 1.
} \qedhere
\]
\end{proof}

There exists a notion of stratification in this setting that reads as follows:

%%%%%%%%%%%%%%%%%%
\begin{dfn}\label{hyperqpd}
A \emph{hyper-$q$-stratification (of level zero)} \footnote{We could also say \emph{hyper-$q$-PD-stratification}.}
on an $A$-module $M$ is an $\widehat {A\langle\xi\rangle_{q}}$-linear isomorphism
\[
\epsilon : \widehat {A\langle\xi\rangle}_{q} \otimes'_{A} M \simeq M \otimes_{A} \widehat {A\langle\xi\rangle}_{q}
\]
such that
\begin{equation} \label{cocyc}
e^*(\epsilon) = \mathrm{Id}_{M} \quad \mathrm{and} \quad \Delta^*(\epsilon) = p_{1}^*(\epsilon) \circ p_{2}^*(\epsilon).
\end{equation}
\end{dfn}

There also exists an apparently weaker notion which is defined as follows :

%%%%%%%%%%%%%%%%%%
\begin{dfn}
A \emph{hyper-$q$-Taylor structure (of level zero)} on an $A$-module $M$ is an $A$-linear map
\[
\theta : M \to M \otimes_{A} \widehat {A\langle\xi\rangle_{q}},
\]
where we consider the target as an $A$-module via the $q$-Taylor map of $A$, such that
\[
(\mathrm{Id}_{M} \otimes e) \circ \theta = \mathrm{Id}_{M}
\]
and
\[
(\theta \otimes \mathrm{Id}_{\widehat {A\langle\xi\rangle_{q}}}) \circ \theta = (\mathrm{Id}_{M} \otimes \Delta) \circ \theta.
\]
\end{dfn}

%%%%%%%%%%%%%
\begin{prop} \label{eqTay}
Restriction and scalar extension along the $q$-Taylor map $\theta : A \to \widehat {A\langle\xi\rangle}_{q}$ provide an equivalence between hyper-$q$-stratifications and hyper-$q$-Taylor structures.
\end{prop}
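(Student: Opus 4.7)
The strategy is the classical one: a hyper-$q$-stratification $\epsilon$ should be completely determined by its restriction $\theta_M := \epsilon(1 \otimes -)$ to the submodule $M = A \otimes'_A M$ of $\widehat{A\langle\xi\rangle}_q \otimes'_A M$, and conversely an $A$-linear Taylor map $\theta_M : M \to M \otimes_A \widehat{A\langle\xi\rangle}_q$ should extend uniquely to an $\widehat{A\langle\xi\rangle}_q$-linear map $\epsilon$ by the formula $\lambda \otimes m \mapsto \lambda \cdot \theta_M(m)$ (where $\lambda$ acts on the right factor). My plan is to define the two constructions, check they are well-defined and mutually inverse, and then show that the two normalization/cocycle conditions (the ones in Definitions \ref{hyperqpd} and \ref{hyperTayl}) are exchanged.

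First, I will check the $A$-linearity of $\theta_M$: using the convention of $\otimes'_A$, one has $1 \otimes am = \theta(a) \otimes m$ inside $\widehat{A\langle\xi\rangle}_q \otimes'_A M$, hence by $\widehat{A\langle\xi\rangle}_q$-linearity of $\epsilon$,
\[
\theta_M(am) = \epsilon(\theta(a) \otimes m) = \theta(a) \cdot \theta_M(m),
\]
which is exactly $A$-linearity of $\theta_M$ with respect to the $A$-module structure on the target given by the $q$-Taylor map, as required in Definition \ref{hyperTayl}. The well-definedness of the reverse construction $\lambda \otimes m \mapsto \lambda \theta_M(m)$ is this same computation read backwards. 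The fact that the two constructions are mutually inverse is then immediate, since an $\widehat{A\langle\xi\rangle}_q$-linear map out of $\widehat{A\langle\xi\rangle}_q \otimes'_A M$ is determined by its restriction to $1 \otimes M$.

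Next, translating the normalization is straightforward: applying $e \otimes \mathrm{Id}_M$ to the defining identity $\epsilon(1 \otimes m) = \theta_M(m)$ (and using diagram~(1) of Proposition \ref{groupoid}) shows that $e^*(\epsilon) = \mathrm{Id}_M$ is equivalent to $(\mathrm{Id}_M \otimes e) \circ \theta_M = \mathrm{Id}_M$. The main calculation, and the part that deserves most care, is the translation of the cocycle condition $\Delta^*(\epsilon) = p_1^*(\epsilon) \circ p_2^*(\epsilon)$ into the coassociativity $(\theta_M \otimes \mathrm{Id}) \circ \theta_M = (\mathrm{Id}_M \otimes \Delta) \circ \theta_M$. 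Evaluating both composites at $1 \otimes 1 \otimes m$, the right-hand side $p_1^*(\epsilon) \circ p_2^*(\epsilon)$ produces $(\theta_M \otimes \mathrm{Id}_{\widehat{A\langle\xi\rangle}_q}) \circ \theta_M(m)$ after two applications of $\widehat{A\langle\xi\rangle}_q$-linearity, while $\Delta^*(\epsilon)$ produces $(\mathrm{Id}_M \otimes \Delta) \circ \theta_M(m)$. Here, items (\ref{commu}) and (\ref{assoc}) of Proposition \ref{groupoid} are the geometric input ensuring that the $\otimes'_A$ conventions and the two $A$-structures on $\widehat{A\langle\xi\rangle}_q$ line up correctly on both sides.

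The main obstacle is bookkeeping rather than substance: one must keep careful track of which $A$-module structure (canonical or via $\theta$) is being used in each tensor factor, especially because the triple tensor product $\widehat{A\langle\xi\rangle}_q \widehat \otimes'_A \widehat{A\langle\xi\rangle}_q \widehat \otimes'_A \widehat{A\langle\xi\rangle}_q$ carries several compatible structures at once. Once the correspondence between $\epsilon$ and $\theta_M$ has been set up and shown to be bijective, everything else is a direct diagram chase using Proposition \ref{groupoid} to match the groupoid-like identities on $\widehat{A\langle\xi\rangle}_q$ with the compatibility conditions on $\theta_M$.
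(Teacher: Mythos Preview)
Your correspondence between $\widehat{A\langle\xi\rangle}_q$-linear maps $\epsilon$ and $A$-linear maps $\theta_M$ is set up correctly, and the translation of the normalization and cocycle conditions is fine. But there is a genuine gap: you never check that the $\epsilon$ obtained from a hyper-$q$-Taylor structure is an \emph{isomorphism}. Definition~\ref{hyperqpd} requires $\epsilon$ to be bijective, not merely an $\widehat{A\langle\xi\rangle}_q$-linear map satisfying the cocycle and normalization identities. Your sentence ``the two constructions are mutually inverse'' shows only that restriction and extension set up a bijection between hyper-$q$-Taylor structures and $\widehat{A\langle\xi\rangle}_q$-linear maps $\epsilon$ satisfying \eqref{cocyc}; it says nothing about invertibility of $\epsilon$ itself.

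This is precisely where the paper's argument uses the flip map $\tau$ of Proposition~\ref{flippf}, which you never invoke. Pulling back the cocycle condition $\Delta^*(\epsilon) = p_1^*(\epsilon)\circ p_2^*(\epsilon)$ along $\mathrm{Id}\times\tau$ and $\tau\times\mathrm{Id}$ and using assertion~\eqref{lastdiag} of Proposition~\ref{groupoid} yields $\epsilon\circ\tau^*(\epsilon)=\mathrm{Id}$ and $\tau^*(\epsilon)\circ\epsilon=\mathrm{Id}$, so that $\tau^*(\epsilon)$ is an explicit two-sided inverse for $\epsilon$. Without this step (or an equivalent one), the proof is incomplete: it is not at all clear, and in fact is the whole content of the proposition, that the cocycle condition alone forces $\epsilon$ to be invertible. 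Your closing remark that ``the main obstacle is bookkeeping rather than substance'' is therefore misleading; the substantive input is the existence of the involution $\tau$ on $\widehat{A\langle\xi\rangle}_q$.
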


In other words, $A$-modules endowed with a hyper-$q$-stratification (resp. a hyper-$q$-Taylor structure) form a category in an obvious way and both categories are isomorphic.

\begin{proof}
This is analogous to corollary 1.4.4 in chapter II of \cite{Berthelot74}.
It is easy to see that any hyper-$q$-stratification induces a hyper-$q$-Taylor structure and that any hyper-$q$-Taylor structure extends to an $\widehat {A\langle\xi\rangle}_{q}$-linear map
\[
\epsilon : \widehat {A\langle\xi\rangle}_{q} \otimes'_{A} M \to M \otimes_{A} \widehat {A\langle\xi\rangle}_{q}, \quad 1 \otimes s \mapsto \theta(s)
\]
that satisfies the normalization and cocycle conditions.
It only remains to show that $\epsilon$ is bijective and it is then that the flip map enters the picture: if we pull back the cocycle condition along $\mathrm{Id} \times \tau$ (resp. $\tau \times \mathrm{Id}$), we get $\mathrm{Id} = \epsilon \circ \tau^*(\epsilon)$ (resp. $\mathrm{Id} = \tau^*(\epsilon) \circ \epsilon$).
\end{proof}

%%%%%%%%%%%%%%%%%%
\section{$q$-calculus}

As before, we let $R$ be a $\mathbb Z[q]_{(p,q-1)}$-algebra and endow all modules with their $(p, q-1)$-adic topology.
Completion is always meant with respect to this topology.
We let $A$ be a complete $R$-algebra with a fixed topologically \'etale coordinate $x$ and we keep the notations from the previous section.
We want to extend here some notions from our previous articles \cite{LeStumQuiros18}, \cite{LeStumQuiros18b} and \cite{GrosLeStumQuiros19} by taking into account the topology (see also \cite{LeStumQuiros20}, where we did the affinoid case).

Recall that there exists a unique endomorphism $\sigma$ of the $R$-algebra $A$ such that $\sigma(x) = qx$ and $\sigma \equiv \mathrm {Id}_{A} \mod q-1$.
Then, the basic notion in $q$-calculus is the following:

%%%%%%%%%%%%%%%
\begin{dfn}
\begin{enumerate}
\item
A \emph{$q$-derivation} of $A$ with values in an $A$-module $M$ is an $R$-linear map $D : A \to M$ that satisfies the twisted Leibniz rule
\[
\forall f, g \in A, \quad D(fg) = D(f)g + \sigma(f)D(g).
\]
\item
A \emph{$q$-derivation} on an $A$-module $M$ with respect to some $q$-derivation $D_{A} : A \to A$ is a map $D_{M} : M \to M$ that satisfies the twisted Leibniz rule
\[
\forall f \in A, \forall s \in M, \quad D_{M}(fs) = D_{A}(f)s + \sigma(f)D_{M}(s).
\]
\item
We will denote by $\mathrm T_{A/R,q}$ the module of $q$-derivations on $A$ with values in $A$.
An \emph{action of $\mathrm T_{A/R,q}$ by $q$-derivations} on $M$ is an $A$-linear map $\mathrm T_{A/R,q} \to \mathrm{End}_{R}(M)$ such that the image of $D \in \mathrm T_{A/R,q}$ is a $q$-derivation of $M$ with respect to $D$.
\end{enumerate}
\end{dfn}

The $q$-derivation $D_{A}$ plays a secondary role in the definition of $D_{M}$ and we will not mention it in the future (we may even use the same letter $D$ for both).
Note also that both definitions of a $q$-derivation coincide when $A=M$ (in which case $D_M= D_A$).

In order to study $q$-derivations, it is necessary to introduce the (complete) module of $q$-differential forms.
Let us first recall that we extend in an asymmetric way the endomorphism $\sigma$ to $P := A \otimes_{R} A$ and that we denote by $I$ the kernel of multiplication and by $I^{(n+1)}$ its twisted powers.

%%%%%%%%%%%%%
\begin{dfn}
The \emph{(complete) module of $q$-differential forms\footnote{There is no natural isomorphism between $\Omega_{A/R,q}$ and the usual (complete) module of derivations $\Omega_{A/R}$.}} of $A/R$ is
\[
\Omega_{A/R,q} := \widehat I/ \overline {I^{(2)}}^{\mathrm {cl}}.
\]
\end{dfn}

The $R$-linear map $p_{2}-p_{1} : A \to P$ takes values inside $I$ and induces therefore an $R$-linear map $\mathrm d_{q} : A \to \Omega_{A/R,q}$.
It has the following universal property:

%%%%%%%%%%%%%
\begin{prop}
The map $\mathrm d_{q} : A \to \Omega_{A/R,q}$ is a $q$-derivation which is universal among all $q$-derivations with value in \emph{complete} $A$-modules. 
Moreover, $\Omega_{A/R,q}$ is a free $A$-module of rank one with basis $\mathrm d_{q}x$.
\end{prop}

\begin{proof}
We already know from proposition 2.4 of \cite{LeStumQuiros18} that $I/I^{(2)}$ is universal with respect to $q$-derivations into any $A$-module and $\Omega_{A/R,q}$ is by definition the completion of $I/I^{(2)}$.
The second assertion follows from lemma \ref{qcor}.
\end{proof}

As an immediate consequence, we see that $\mathrm T_{A/R,q}$ is a free $A$-module on one generator $\partial_{A,q}$ determined by the condition $\partial_{A,q}(x) = 1$.
In particular, giving an action of $\mathrm T_{A/R,q}$ by $q$-derivations of $M$ amounts to specifying a $q$-derivation $\partial_{M,q}$ on $M$ with respect to $\partial_{A,q}$.
There exists an equivalent notion which is more natural in some sense:

%%%%%%%%%%%%%%%%%%%%%
\begin{dfn}
A \emph{$q$-connection} on an $A$-module $M$ is an $R$-linear map
\[
\nabla : M \to M \otimes_{A} \Omega_{A/R,q}
\]
that satisfies the twisted Leibniz rule
\[
\forall f \in A, \forall s \in M, \quad \nabla(fs) = s \otimes \mathrm d_{q}(f) + \sigma(f)\nabla(s).
\]
\end{dfn}

A $q$-connection on $M$ is equivalent to an action of $\mathrm T_{A/R,q}$ by $q$-derivations via the formula
\[
\nabla(s) = \partial_{M,q}(s) \otimes \mathrm d_{q}(x).
\]

We shall soon discuss the general notion of a $q$-differential operator but we can already introduce the following:

%%%%%%%%%%%%%%%%
\begin{dfn}
The \emph{ring of $q$-differential operators (of level zero)} of $A/R$ is the non-commutative polynomial ring $ D_{A/R,q}$ in one generator $\partial_{q}$ over $A$ with the commutation rule
\[
\partial_{q} \circ f = \sigma(f)\partial_{q} + \partial_{A,q}(f).
\]
\end{dfn}

Clearly, a $q$-connection is equivalent to a structure of a $ D_{A/R,q}$-module through the formula $\partial_{q}s = \partial_{M,q}(s)$.

The twisted polynomial ring will now enter the picture:

%%%%%%%%%%%%%%%%%%%%%%%%%
\begin{dfn}
For each $n \in \mathbb N$, the \emph{$q$-Taylor map of (level zero and) order $n$} is the composite
 \[
 \theta_{n} : A \overset \theta \to \widehat{A\langle\xi\rangle}_{q} \twoheadrightarrow \widehat{A\langle\xi\rangle_{q}}/\widehat{I^{[n+1]}} \simeq A\langle\xi\rangle_{q}/I^{[n+1]}.
 \]
 \end{dfn}
 
This is a morphism of $R$-algebras that endows $A\langle\xi\rangle_{q}/I^{[n+1]}$ with a new structure of $A$-module that we may informally call the right structure (by symmetry with the left structure corresponding to the canonical action of $A$).
As we already did with $\widehat{A\langle\xi\rangle}_{q}$, we will use the notation $\otimes'$ to indicate that we consider $A\langle\xi\rangle_{q}/I^{[n+1]}$ as an $A$-module via the $q$-Taylor map $\theta_n$ and not with respect to the canonical map.

We will prove general formulas below but we can already notice that the $q$-Taylor map is closely related to $q$-derivations because, by definition,
\[
\forall f \in A, \quad \theta_1(f) - f = \partial_{A,q}(f)\xi.
\]
In order to make explicit computations later, it will be necessary to have a better grasp on the right $A$-module structure:

%%%%%%%%%%%%%%%%%%%%
\begin{lem} \label{sigman}
The action of $A$ via the $q$-Taylor map on the ideal $I^{[n]}/I^{[n+1]}$ is identical to the canonical action of $A$ twisted by $\sigma^n$.
\end{lem}

\begin{proof}
Since $\theta$ is a morphism of $R$-algebras and $ I^{[n]}/ I^{[n+1]}$ is the free $A$-module of rank one generated by (the image of) $\xi^{[n]}$, there exists a unique morphism of $R$-algebras $\sigma_n : A \to A$ such that 
\[
\forall f \in A, \quad \theta(f)\xi^{[n]} \equiv \sigma_n(f) \xi^{[n]} \mod I^{[n+1]}.
\]
In the case $q=1$, we have $\sigma_n = \mathrm{Id} = \sigma^n$.
Moreover, since $\sigma(x+\xi) = x + \xi$ and $\xi^{(n+1)}= \sigma^n(\xi)\xi^{(n)}$, we have
\begin{align*}
\theta(x)\xi^{[n]} &= (x + \xi)\xi^{[n]}
\\ &= \sigma^n(x + \xi)\xi^{[n]}
\\ &= (\sigma^n(x) + \sigma^n(\xi))\xi^{[n]} 
\\ &= \sigma^n(x) \xi^{[n]}+(n+1)_{q}\xi^{[n+1]} 
\end{align*}
so that $\sigma_n(x) = \sigma^n(x)$.
Since $x$ is a topological coordinate on $A$, which is complete, we can conclude that $\sigma_n = \sigma^n$.
\end{proof}

As a consequence, the map $\theta_n$ is finite free.
More precisely:

%%%%%%%%%%%%%%%%
\begin{lem} \label{goodbasis}
\begin{enumerate}
\item The ideal $I^{[n]}/I^{[n+1]}$ is a free $A$-module of rank one on $\xi^{[n]}$ for both the left and the right $A$-module structures.
\item The ring $A\langle\xi\rangle_{q}/I^{[n+1]}$ is a free $A$-module on $\left\{\xi^{[k]}\right\}_{k=0}^n$ for both the left and the right $A$-module structures.
\item The ring $A\langle\xi\rangle_{q}/I^{[n+1]} \otimes'_A A\langle\xi\rangle_{q}/I^{[m+1]}$ is a free $A$-module on $\left\{\xi^{[k]} \otimes \xi^{[l]} \right \}_{k,l=0}^{n,m}$ for the left, middle and right $A$-module structures.
\end{enumerate}
\end{lem}

\begin{proof}
The first assertion follows from lemma \ref{sigman} since $\sigma$ is an automorphism of $A$ and the other ones are then obtained by induction on $n$ (and $m$).
\end{proof}

As we announced, there exists a more general (and more formal) approach to $q$-differential operators that we explain now (see section 5 of \cite{GrosLeStumQuiros19} for the analogous situation when there is no topology).

%%%%%%%%%%%%%%%%
\begin{dfn}
Let $M$ and $N$ be two $A$-modules.
A \emph{$q$-differential operator} of order at most $n$ (and level zero) from $M$ to $N$ is an $A$-linear map
\begin{equation} \label{diffop}
u : A\langle \xi \rangle_{q}/I^{[n+1]} \otimes'_{A} M \to N.
\end{equation}
\end{dfn}

We insist on the fact that we have $\varphi \otimes fs = \theta_n(f)\varphi \otimes s$ on the left hand side and linearity means that $u(f\varphi \otimes s) = fu(\varphi \otimes s)$.

We will write
\[
\mathrm{Diff}_{q,n}(M,N) := \mathrm{Hom}_{A}\left(A \langle \xi \rangle_{q}/I^{[n+1]} \otimes'_{A} M, N\right)
\]
and
\[
\mathrm{Diff}_{q}(M, N) := \varinjlim_{n \in \mathbb N}\mathrm{Diff}_{q,n}(M,N)
\]
($\mathrm{Diff}_{q,n}(M)$ and $\mathrm{Diff}_{q}(M)$ when $N=M$).
Note that $\mathrm{Diff}_{q,n}(M,N)$ (resp. $\mathrm{Diff}_{q}(M,N)$) is naturally an $A\langle \xi \rangle_{q}/I^{[n+1]}$-module (resp. an $A\langle \xi \rangle_{q}$-module).
We may now consider for each $m,n \in \mathbb N$ the \emph{diagonal} map of finite order (a morphism of $A$-algebras)
\[
\xymatrix@R=0cm{
A\langle \xi \rangle_{q}/I^{[n+m+1]} \ar[rr]^-{\Delta_{n,m}} && A\langle \xi \rangle_{q}/I^{[n+1]} \otimes'_{A} A\langle \xi \rangle_{q}/I^{[m+1]} \\
\xi^{[k]} \ar@{|->}[rr] && \sum_{i+j=k} \xi^{[i]} \otimes \xi^{[j]},
}
\]
which is induced by the diagonal map $\Delta$ of proposition \ref{diagprop}.
We may compose $q$-differential operators in the usual way.
More precisely, if we are given $u$ as in \eqref{diffop} and
\[
v : A\langle \xi \rangle_{q}/I^{[m+1]} \otimes'_{A} L \to M,
\]
then we define $u \widetilde \circ v$ as the composite
\[
\xymatrix{
A\langle \xi \rangle_{q}/I^{[n+m+1]} \otimes'_{A} L \ar[r]^-{u \widetilde \circ v} \ar[d]^{\Delta_{n,m} \otimes' \mathrm{Id}} & N \\ A\langle \xi \rangle_{q}/I^{[n+1]} \otimes'_{A} A\langle \xi \rangle_{q}/I^{[m+1]} \otimes'_{A} L \ar[r]^-{\mathrm{Id} \otimes' v} & A\langle \xi \rangle_{q}/I^{[n+1]} \otimes'_{A} M \ar[u]^{u}.
}
\]
Composition of $q$-differential operators is somehow tricky because we have for example $\mathrm{Diff}_{q,0}(M,N) = \mathrm{Hom}_{A}\left(M, N\right)$, and in particular $\mathrm{Diff}_{q,0}(A) \simeq A$, so that any $f \in A$ may be considered as a $q$-differential operator of order $0$.
But, if $u \in \mathrm{Diff}_{q,n}(A)$, then $u \widetilde \circ f = u \circ \theta_n(f)$.

Associativity of composition of $q$-differential operators follows from the commutativity of diagram \eqref{assoc} in proposition \ref{groupoid} and, if $M$ is any $A$-module, we obtain a structure of $R$-algebra on $\mathrm{Diff}_{q}(M)$.
It should also be noticed that composition of $q$-differential operators is compatible with composition of $R$-linear maps under the restriction map
\[
\xymatrix@R=0cm{ \mathrm{Diff}_{q,n}(M,N) \ar[r] & \mathrm{Hom}_{R}(M,N) \\
(A\langle \xi \rangle_{q}/I^{[n+1]} \otimes'_{A} M \overset u \to N) \ar[r] & (M \hookrightarrow A\langle \xi \rangle_{q}/I^{[n+1]} \otimes'_{A} M \overset u \to N)
}
\]
(which is not injective in general).
In particular, there exits a canonical $A$-linear morphism of $R$-algebras
\[
\mathrm{Diff}_{q}(M) \to \mathrm{End}_{R}(M)
\]
when $M$ is an $A$-module.

In the case $M=A$, we recover our previous ring of $q$-differential operators:

%%%%%%%%%%%%%%%%%%%%%
\begin{lem} \label{Difis}
There exists an $A$-linear isomorphism of $R$-algebras
\[
D_{A/R,q} \simeq \mathrm{Diff}_{q}(A) , \quad \partial_{q}^k \mapsto \left( \widetilde{\partial_{q}^k} : \xi^{[n]}\ \mapsto \left\{\begin{array}l 1\ \mathrm{if}\ n=k \\ 0 \ \mathrm{otherwise} \end{array}\right. \right) \in \mathrm{Diff}_{q,k}(A).
\]
\end{lem}

\begin{proof}
This map is clearly bijective.
Thus, we have to check that
\[
\forall f \in A, \quad \widetilde{\partial_{q}} \widetilde \circ f = \sigma(f) \widetilde{\partial_{q}} + \partial_{A,q}(f) \quad \mathrm{and} \quad \forall k \geq 1, \quad \widetilde{\partial_{q}^{k+1}} = \widetilde{\partial_{q}} \widetilde \circ \widetilde{\partial_{q}^k}.
\]
First of all, we have
\[
( \widetilde{\partial_{q}} \widetilde \circ f)(1) = \widetilde{\partial_{q}} (\theta_1(f)) = \widetilde{\partial_{q}} (f + \partial_{A,q}(f)\xi) = \partial_{A,q}(f),
\]
and
\[
( \widetilde{\partial_{q}} \widetilde \circ f)(\xi) = \widetilde{\partial_{q}} (\theta_1(f)\xi) = \widetilde{\partial_{q}} (\sigma(f)\xi) = \sigma(f).
\]
Since, in general, we have $\left(\sum f_{k} \widetilde{\partial_{q}}^k\right) \cdot \xi^{[n]} = f_{n}$, the first equality holds.
Also,
\begin{align*}
( \widetilde{\partial_{q}} \widetilde \circ \widetilde{\partial_{q}^k})(1) &= ( \widetilde{\partial_{q}} \circ \mathrm{Id} \otimes' \widetilde{\partial_{q}^k} \circ \Delta_{1,k})(1)
\\ &= ( \widetilde{\partial_{q}} \circ \mathrm{Id} \otimes' \widetilde{\partial_{q}^k} )(1 \otimes 1)
\\ &= \widetilde{\partial_{q}} (0)
\\ &= 0,
\end{align*}
 for $1 \leq n \leq k$,
 \begin{align*}
( \widetilde{\partial_{q}} \widetilde \circ \widetilde{\partial_{q}^k} )(\xi^{[n]}) &= ( \widetilde{\partial_{q}} \circ \mathrm{Id} \otimes' \widetilde{\partial_{q}^k} \circ \Delta_{1,k})(\xi^{[n]})
\\ &= ( \widetilde{\partial_{q}} \circ \mathrm{Id} \otimes' \widetilde{\partial_{q}^k} )(1 \otimes \xi^{[n]} + \xi \otimes \xi^{[n-1]})
\\ &= \widetilde{\partial_{q}} \left(\theta(\widetilde{\partial_{q}^k}(\xi^{[n]})) + \theta(\widetilde{\partial_{q}^k} (\xi^{[n-1]})) \xi \right)
\\ &=\left\{\begin{array} l \widetilde{\partial_{q}}(0) = 0 \ \mathrm{if}\ k < n \\ \widetilde{\partial_{q}}(1) = 0 \ \mathrm{if}\ k= n, \end{array}\right.
\end{align*}
and finally
\begin{align*}
( \widetilde{\partial_{q}} \widetilde \circ \widetilde{\partial_{q}^k} )(\xi^{[k+1]}) &= ( \widetilde{\partial_{q}} \circ \mathrm{Id} \otimes' \widetilde{\partial_{q}^k} \circ \Delta_{1,k})(\xi^{[k+1]})
\\ &= ( \widetilde{\partial_{q}} \circ \mathrm{Id} \otimes' \widetilde{\partial_{q}^k} )(\xi \otimes \xi^{[k]})
\\ &= \widetilde{\partial_{q}} (\xi)
\\ &= 1. \qedhere
 \end{align*}
\end{proof}

%%%%%%%%%%%%%%%%%
\begin{rmks}
\begin{enumerate}
\item 
In other words, $\{\partial_q^k\}_{k \in \mathbb N}$ and $\{\xi^{[n]}\}_{n \in \mathbb N}$ are ``dual basis'' for $ D_{A/R,q}$ and $A\langle \xi \rangle_{q}$.
\item 
If we denote the image of $P$ under the isomorphism $D_{A/R,q} \simeq \mathrm{Diff}_{q}(A)$ by $\widetilde P$, then
\[
\forall P \in D_{A/R,q}, \quad P = \sum_{k \geq 0} \widetilde P(\xi^{[k]}) \partial_q^k \quad \mathrm{and} \quad \forall \varphi \in A\langle \xi \rangle_{q}, \quad \varphi = \sum_{n \geq 0} \widetilde \partial_q^n(\varphi) \xi^{[n]}.
\]
\end{enumerate}
\end{rmks}

As a consequence of lemma \ref{Difis}, we see that $D_{A/R,q}$ is naturally an $A\langle \xi \rangle_{q}$-module and we can make the structure explicit:

%%%%%%%%%%
\begin{lem} \label{actd}
We have
\[
\forall k,n \in \mathbb N, \quad \xi^{[n]} \cdot \partial_{q}^k =\sum_{i = 0}^{n} q^{\frac{i(i-1)}2} {k \choose n}_{q}{n \choose i}_{q} (q-1)^{i} x^i \partial_{q}^{k-n+i}
\]
(for $n \leq k$ and $0$ otherwise).
\end{lem}

\begin{proof}
From formula \eqref{mulrul}, we deduce that, for $m \in \mathbb N$,
\begin{align*}
\left(\xi^{[n]} \cdot \widetilde{\partial_{q}^k}\right) (\xi^{[m]}) & = \widetilde{\partial_{q}^k} \left(\xi^{[n]} \xi^{[m]}\right)
\\ &= \widetilde{\partial_{q}^k}\left(\sum_{0\leq i \leq n,m} q^{\frac{i(i-1)}2}{n + m -i \choose n}_{q}{n \choose i}_{q} (q-1)^{i}x^i\xi^{[n+m-i]}\right)
\\ &=\left\{ \begin{array}{ll}q^{\frac{i(i-1)}2}{n + m -i \choose n}_{q}{n \choose i}_{q} (q-1)^{i} x^i \ &\mathrm{if} \ n+m-i =k, \\ 0 \ &\mathrm{otherwise}. \end{array}\right.
\end{align*}

Note that the condition $0 \leq i \leq m,n$ is only meant to insist on the fact that the $q$-binomial coefficients are zero otherwise and this is why it could disappear from the final formula.
Therefore,
\begin{align*}
\xi^{[n]} \cdot \widetilde{\partial_{q}^k} &= \sum_{i =0}^{n} q^{\frac{i(i-1)}2}{k \choose n}_{q}{n \choose i}_{q} (q-1)^{i} x^i \widetilde{\partial_{q}^{k-n+i}}
\end{align*}
for $n \leq k$, and $0$ otherwise.
\end{proof}

\begin{rmks}
\begin{enumerate}
\item
The formula in the lemma may also be obtained in two steps:
\[
\xi^{[n]} \cdot \partial_{q}^k ={k \choose n}_{q} (\xi^{[n]} \cdot \partial_{q}^n) \circ \partial_{q}^{k-n} \quad \mathrm{with} \quad \xi^{[n]} \cdot \partial_{q}^n =\sum_{i = 0}^{n} q^{\frac{i(i-1)}2}{n \choose i}_{q} (q-1)^{i} x^i \partial_{q}^{i}.
\]
\item Alternatively, if $P \in D_{A/R,q}$ and $\varphi \in A\langle \xi \rangle_{q}$, we have
\[
\varphi \cdot P = \sum_{0 \leq i \leq n \leq k} q^{\frac{i(i-1)}2} {k \choose n}_{q}{n \choose i}_{q} (q-1)^{i} x^i \widetilde P(\xi^{[k]}) \widetilde \partial_{q}^n(\varphi) \partial_{q}^{k-n+i}.
\]
\end{enumerate}
\end{rmks}

We may always consider a $q$-derivation as a $q$-differential operator as follows:

%%%%%%%%%%%%
\begin{lem} \label{difop}
A $q$-derivation $D : M \to M$ extends uniquely to a $q$-differential operator
\[
\widetilde D : A\langle \xi \rangle_{q}/I^{[2]} \otimes'_{A }M \to M, \quad \xi \otimes s \mapsto s + (q-1)xD(s)
\]
of order at most one.
\end{lem}

\begin{proof}
First of all, $D$ extends uniquely to an $A$-linear map
\[
\widetilde D : P \otimes'_A M \to M, \quad (f \otimes g) \otimes s \mapsto fD(gs)
\]
and we have
\begin{align*}
\widetilde D(\xi \otimes s) &= \widetilde D ((1 \otimes x) \otimes s - (x \otimes 1) \otimes s) \\&= D(xs) - xD(s) \\&= s + (q-1)xD(s).
\end{align*}
Moreover, for $f,g \in A$ and $s \in M$, we have
\begin{align*}
&\widetilde D((1 \otimes f - f \otimes 1)(1 \otimes g - \sigma(g) \otimes 1) \otimes s)
\\&= \widetilde D((1 \otimes fg \otimes s - \sigma(g) \otimes f \otimes s - f \otimes g \otimes s + f\sigma(g) \otimes 1 \otimes s)
\\&=D(fgs) - \sigma(g)D(fs) - f D(gs) + f\sigma(g) D(s)
\\&= fD(g)s - f D(gs) + f\sigma(g) D(s)
\\&=0.
\end{align*}
It follows that $\widetilde D$ factors uniquely through $P/I^{(2)} \otimes'_A M$ and we obtain the expected differential operator as the composite
\[
A\langle \xi \rangle_{q}/I^{[2]} \otimes'_A M \simeq A[\xi]/\xi^{(2)} \otimes'_A M \to P/I^{(2)} \otimes'_A M \to M. \qedhere
\]
\end{proof}

As an example, we see that $\widetilde {\partial_{A,q}}(\xi) = 1$ so that this notation is compatible with that of lemma \ref{Difis}.

%%%%%%%%%%
\begin{rmk}
Since we have a split short exact sequence
\[
0 \to \Omega_{A,q} \to A\langle\xi\rangle_{q}/I^{[2]} \to A \to 0,
\]
given an $A$-module $M$, it is equivalent to give an
$R$-linear map $\nabla : M \to M \otimes_A \Omega_{A,q}$ or an $R$-linear map $\theta_1 : M \to M \otimes_A A\langle\xi\rangle_{q}/I^{[2]}$ such that the composite map
\[
M \overset {\theta_1} \to M \otimes_A A\langle\xi\rangle_{q}/I^{[2]} \to M
\]
is the identity.
This equivalence is given by
\[
\forall s \in M, \quad \theta_1(s) = s \otimes 1 + \nabla(s).
\]
Moreover, $\nabla$ is a $q$-connection if and only if $\theta_1$ is semilinear in the sense that
\[
\forall f \in A, s \in M, \quad \theta_1(fs) = \theta_1(f) \theta_1(s).
\]
More precisely, since $\theta_1$ acts as $\sigma$ on $ \Omega_{A,q}$, we have for $f \in A$ and $s \in M$,
\begin{align*}
\theta_1(fs) = \theta_1(f) \theta_1(s) &\Leftrightarrow f s \otimes 1 + \nabla(fs) = \theta_1(f) (s \otimes 1 + \nabla(s))
\\ &\Leftrightarrow \nabla(fs) = s \otimes \theta_1(f) - s \otimes f + \sigma(f) \nabla(s)
\\ &\Leftrightarrow \nabla(fs) = s \otimes \mathrm d_q(f) + \sigma (f) \nabla(s).
\end{align*}

The map $\theta_1$ then extends uniquely to an $A\langle\xi\rangle_{q}/I^{[2]}$-linear map
\[
\epsilon_1 : A\langle\xi\rangle_{q}/I^{[2]} \otimes_A' M \to M \otimes_A A\langle\xi\rangle_{q}/I^{[2]}.
\]
If $\partial_{M,q}$ denotes the $q$-derivation associated to $\nabla$, then we have
\begin{equation} \label{case1}
\widetilde {\partial_{M,q}} : A\langle\xi\rangle_{q}/I^{[2]} \otimes_A' M \overset{\epsilon_1} \longrightarrow M \otimes_A A\langle\xi\rangle_{q}/I^{[2]} \overset{\mathrm{Id} \otimes \widetilde {\partial_{A,q}}} \longrightarrow M.
\end{equation}
\end{rmk}

We will need below the following technical result:

%%%%%%%%%%
\begin{lem} \label{actd2}
If $M$ is a $D_{A/R,q}$-module and $s \in M$, then
\[
\forall k,n \in \mathbb N, \quad \widetilde \partial_{M,q}^{k}(\xi^{[n]} \otimes s) = \sum_{i = 0}^{n} q^{\frac{i(i-1)}2}{k \choose n}_{q} {n \choose i}_{q} (q-1)^{i} x^i \partial_{M,q}^{k-n+i}(s)
\]
(for $n \leq k$ and $0$ otherwise).
\end{lem}

\begin{proof}
We proceed by induction on $k$ (the case $k=0$ being trivial).
Note that the case $k=1$ is essentially lemma \ref{difop}.
By definition,
\begin{align*}
\widetilde \partial_{M,q}^{k+1}(\xi^{[n]} \otimes s) &= (\widetilde \partial_{M,q} \circ \mathrm{Id} \otimes' \widetilde \partial_{M,q}^k \circ \Delta_{1,k} \otimes' \mathrm{Id}) (\xi^{[n]} \otimes s)
\\ &= (\widetilde \partial_{M,q} \circ \mathrm{Id} \otimes' \widetilde \partial_{M,q}^k) (1 \otimes \xi^{[n]} \otimes s + \xi \otimes \xi^{[n-1]} \otimes s)
\\ &= \widetilde \partial_{M,q}( 1 \otimes \widetilde \partial_{M,q}^k(\xi^{[n]} \otimes s)) + \widetilde \partial_{M,q}( \xi \otimes \widetilde \partial_{M,q}^k(\xi^{[n-1]} \otimes s))
\end{align*}
and we will compute separately both terms.
First of all, we have for $0 \leq n \leq k$,
\begin{align*}
\widetilde \partial_{M,q}( 1 \otimes \widetilde \partial_{M,q}^k(\xi^{[n]} \otimes s))&= \sum_{i = 0}^{n} q^{\frac{i(i-1)}2}{k \choose n}_{q} {n \choose i}_{q} (q-1)^{i} \partial_{M,q}(x^i \partial_{M,q}^{k-n+i}(s))
\\ &= \sum_{i = 1}^{n} q^{\frac{i(i-1)}2}{k \choose n}_{q}{n \choose i}_{q} (q-1)^{i} (i)_qx^{i-1}\partial_{M,q}^{k-n+i}(s)
\\ &+ \sum_{i = 0}^{n} q^{\frac{i(i-1)}2}{k \choose n}_{q}{n \choose i}_{q} (q-1)^{i} q^ix^i \partial_{M,q}^{k-n+i+1}(s)
\\ &= \sum_{i = 0}^{n-1} q^{\frac{i(i+1)}2}{k \choose n}_{q}{n \choose i+1}_{q} (q-1)^{i+1} (i+1)_qx^{i}\partial_{M,q}^{k-n+i+1}(s)
\\ &+ \sum_{i = 0}^{n} q^{\frac{i(i-1)}2}{k \choose n}_{q}{n \choose i}_{q} (q-1)^{i} q^ix^i \partial_{M,q}^{k-n+i+1}(s).
\end{align*}
We can rearrange the first sum:
\begin{align*}
q^{\frac{i(i+1)}2}{n \choose i+1}_{q} (q-1)^{i+1} (i+1)_q &= q^{\frac{i(i-1)}2}q^i{n \choose i}_{q} (n-i)_q(q-1)^{i+1}
\\&= q^{\frac{i(i-1)}2}q^i{n \choose i}_{q} (q^{n-i} -1)(q-1)^{i}
\\&= q^{\frac{i(i-1)}2} {n \choose i}_{q} (q^{n} -q^i)(q-1)^{i}.
\end{align*}
It follows that
\begin{align*}
\widetilde \partial_{M,q}( 1 \otimes \widetilde \partial_{M,q}^k(\xi^{[n]} \otimes s))= \sum_{i = 0}^{n} q^{\frac{i(i-1)}2} q^n {k \choose n}_{q}{n \choose i}_{q} (q-1)^{i} x^i \partial_{M,q}^{k-n+i+1}(s).
\end{align*}
We turn now to the second term and it follows from lemma \ref{difop} that, for $0 \leq n-1 \leq k$,
\begin{align*}
\widetilde \partial_{M,q}( \xi \otimes \widetilde \partial_{M,q}^k(\xi^{[n-1]} \otimes s)) &= \sum_{i = 0}^{n-1} q^{\frac{i(i-1)}2}{k \choose n-1}_{q}{n-1 \choose i}_{q} (q-1)^{i} x^i \partial_{M,q}^{k-n+i+1}(s)
\\&+ (q-1)x \sum_{i = 0}^{n-1} q^{\frac{i(i-1)}2}{k \choose n-1}_{q}{n-1 \choose i}_{q} (q-1)^{i} \partial_{M,q}(x^i \partial_{M,q}^{k-n+i+1}(s))
\\ &= \sum_{i = 0}^{n-1} q^{\frac{i(i-1)}2}{k \choose n-1}_{q}{n-1 \choose i}_{q} (q-1)^{i} x^i \partial_{M,q}^{k-n+i+1}(s)
\\&+ \sum_{i = 0}^{n-1} q^{\frac{i(i-1)}2}{k \choose n-1}_{q} {n-1 \choose i}_{q} (q-1)^{i+1} (i)_q x^{i} \partial_{M,q}^{k-n+i+1}(s)
\\&+ \sum_{i = 0}^{n-1} q^{\frac{i(i-1)}2}{k \choose n-1}_{q}{n-1 \choose i}_{q} (q-1)^{i+1} q^{i}x^{i+1} \partial_{M,q}^{k-n+i+2}(s)
\\ &= \sum_{i = 0}^{n-1} q^{\frac{i(i-1)}2}{k \choose n-1}_{q}{n-1 \choose i}_{q} (q-1)^{i} x^i \partial_{M,q}^{k-n+i+1}(s)
\\&+ \sum_{i = 0}^{n-1} q^{\frac{i(i-1)}2}{k \choose n-1}_{q} {n-1 \choose i}_{q} (q-1)^{i} (q^i-1) x^{i} \partial_{M,q}^{k-n+i+1}(s)
\\&+ \sum_{i = 1}^{n} q^{\frac{(i-2)(i-1)}2}{k \choose n-1}_{q}{n-1 \choose i-1}_{q} (q-1)^{i} q^{i-1}x^{i} \partial_{M,q}^{k-n+i+1}(s).
\end{align*}
Since
\[
q^{\frac{(i-2)(i-1)}2} q^{i-1} = q^{\frac{i(i-1)}2}
\quad \mathrm{and} \quad 
{n-1 \choose i}_{q} q^i + {n-1 \choose i-1}_{q} = {n \choose i}_{q},
\]
we finally get
\[
\widetilde \partial_{M,q}( \xi \otimes \widetilde \partial_{M,q}^k(\xi^{[n-1]} \otimes s)) = \sum_{i=0}^{n} q^{\frac{(i-1)i}2}{k \choose n-1}_{q}{n \choose i}_{q} (q-1)^{i} x^{i} \partial_{M,q}^{k-n+i+1}(s).
\]
Now, we use
\[
q^n {k \choose n}_{q} + {k \choose n-1}_{q} = {k+1 \choose n}_{q}
\]
in order to obtain
\[
\widetilde \partial_{M,q}^{k+1}(\xi^{[n]} \otimes s) = \sum_{i = 0}^{n} q^{\frac{i(i-1)}2}{k+1 \choose n}_{q} {n \choose i}_{q} (q-1)^{i} x^i \partial_{M,q}^{k-n+i+1}(s). \qedhere
\]
\end{proof}

%%%%%%%%
\begin{lem} \label{lift}
If $M$ is a $ D_{A/R,q}$-module, then the structural map $D_{A/R,q} \to \mathrm{End}_{R}(M)$ lifts uniquely to an $A\langle \xi \rangle_{q}$-linear morphism of $R$-algebras
\[
\rho : \mathrm{Diff}_{q}(A) \simeq D_{A/R,q} \to \mathrm{Diff}_{q}(M).
\]
\end{lem}

\begin{proof}
The image of the generator $\partial_{q} \in D_{A/R,q}$ in $\mathrm{End}_{R}(M)$ is a $q$-derivation $\partial_{M,q}$ of $M$.
Moreover, we may observe that
\[
\forall f \in A, \quad \widetilde \partial_{M,q} \widetilde \circ f = \sigma(f)\ \widetilde \circ\ \widetilde \partial_{M,q} + \partial_{A,q}(f).
\]
To verify this formula, it is sufficient to check it on $\xi \otimes s$ for $s \in M$, but we have $\partial_{q}(f)(\xi \otimes s) = 0$ and
\begin{align*}
(\widetilde \partial_{M,q}\ \widetilde \circ\ f)(\xi \otimes s) & = \widetilde \partial_{M,q} (\theta_{1}(f)\xi \otimes s) \\& = \widetilde \partial_{M,q} (\sigma(f)\xi \otimes s) \\ &= \sigma(f)\widetilde \partial_{M,q} (\xi \otimes s).
\end{align*}
It therefore follows from lemma \ref{difop} that the structural map $D_{A/R,q} \to \mathrm{End}_{R}(M)$ lifts uniquely to an $A$-linear morphism of $R$-algebras $\rho : D_{A/R,q} \to \mathrm{Diff}_{q}(M)$ sending $\partial_{q}$ to $\widetilde \partial_{M,q}$.

It remains to show that $\rho$ is $A\langle \xi \rangle_{q}$-linear.
It follows from lemmas \ref{actd2} and \ref{actd} that, for $s \in M$, we have
\[
\forall k,n \in \mathbb N, \quad \rho(\partial_{q}^{k})(\xi^{[n]} \otimes s) = \rho\left(\xi^{[n]} \cdot \widetilde \partial_{q}^{k}\right)(1 \otimes s).
\]
By linearity, we see that 
\[
\forall P \in D_{A/R,q}, \forall \varphi \in A\langle \xi \rangle_{q}, \quad \rho(\widetilde P)(\varphi \otimes s) = \rho\left(\varphi \cdot \widetilde P\right)(1 \otimes s).
\]
Now, if $P \in D_{A/R,q}$ and $\varphi, \psi \in A\langle \xi \rangle_{q}$, we will have
\begin{align*}
(\varphi \cdot \rho(\widetilde P))(\psi \otimes s) &= \rho(\widetilde P)(\varphi\psi \otimes s)
\\&= \rho\left(\varphi\psi \cdot \widetilde P\right)(1 \otimes s)
\\&= \rho\left(\psi \cdot (\varphi \cdot \widetilde P)\right)(1 \otimes s)
\\&= \rho\left(\varphi \cdot \widetilde P\right)(\psi \otimes s). \qedhere
\end{align*}
\end{proof}

As a consequence, we see that $\rho$ induces for all $n \in \mathbb N$ an $A\langle \xi \rangle_{q}/I^{[n+1]}$-linear map
\[
\rho_n : \mathrm{Diff}_{q,n}(A) \to \mathrm{Diff}_{q,n}(M).
\]

The next notion is that of a $q$-Taylor structure whose infinite level analog is studied in section 5 of \cite{LeStumQuiros18}:

%%%%%%%%%%%%
\begin{dfn}
A \emph{$q$-Taylor structure (of level zero)} on an $A$-module $M$ is a compatible family of semilinear maps ($A$-linear if we consider the target as an $A$-module via the $q$-Taylor map) 
\[
\theta_{n} : M \to M \otimes_{A} A\langle \xi \rangle_{q}/I^{[n+1]},
\]
such that
\begin{equation} \label{cond1}
\forall s \in M, \theta_{0}(s) = s \otimes 1
\end{equation}
and
\begin{equation} \label{cond2}
\forall m, n \in \mathbb Z_{\geq 0}, \quad (\theta_{n} \otimes \mathrm{Id}_{A\langle \xi \rangle_{q}/I^{[m+1]}}) \circ \theta_{m} = (\mathrm{Id}_{M} \otimes \Delta_{n,m}) \circ \theta_{m+n}.
\end{equation}
\end{dfn}

We can make explicit the semilinear condition, which sounds very natural:
\[
\forall f \in A, \forall s \in M, \quad \theta_{n}(fs) = \theta_{n}(f) \theta_{n}(s).
\]

%%%%%%
\begin{prop} \label{taylD}
A $q$-Taylor structure on an $A$-module $M$ is equivalent to a $ D_{A/R,q}$-module structure through the formula
\begin{equation} \label{Tayldiv}
\forall s \in M, \quad \theta_{n}(s) = \sum_{k=0}^n \partial^k_{M,q}(s) \otimes \xi^{[k]}.
\end{equation}
\end{prop}

\begin{proof}
In general, if $X$ and $Y$ are two $A$-modules, then we have an adjunction
\[
X \to \mathrm{Hom}_A(Y,M) \quad  \Leftrightarrow \quad X \otimes_A Y \to M \quad \Leftrightarrow \quad Y \to \mathrm{Hom}_A(X,M)
\]
given by
\[
x \mapsto (y \mapsto \langle x, y \rangle) \quad \Leftrightarrow \quad x\otimes y \mapsto \langle x, y \rangle \quad \Leftrightarrow \quad y \mapsto (x \mapsto \langle x, y \rangle).
\]
If $B$ is an $A$-module and $B'$ is an $A$-bimodule, then we can apply this to $X = B' \otimes_A M$ (in which we use the right structure for the tensor product and the left structure in order to turn the tensor product into an $A$-module) and $Y = \mathrm{Hom}_A(B,A)$ and we get
\begin{equation} \label{adju}
B' \otimes_A M \to \mathrm{Hom}_A(\mathrm{Hom}_{A}(B,A),M) \quad \Leftrightarrow \quad  \mathrm{Hom}_A(B,A) \to \mathrm{Hom}_A(B' \otimes_{A} M,M)
\end{equation}
given by
\[
\varphi \otimes s \mapsto (P \mapsto \langle \varphi \otimes s, P \rangle) \quad \Leftrightarrow \quad  P \mapsto (\varphi \otimes s \mapsto \langle \varphi \otimes s, P \rangle).
\]
When $B$ is free over $A$, we also have an isomorphism
\[
M \otimes_A B \simeq \mathrm{Hom}_{A}(\mathrm{Hom}_A(B, A),M), \quad s \otimes \varphi \mapsto (P \mapsto P(\varphi)s)
\]
from which we deduce an adjunction
\[
B' \otimes_A M \overset \epsilon \to M \otimes_A B  \quad \Leftrightarrow \quad  \mathrm{Hom}_A(B,A) \overset \rho \to \mathrm{Hom}_A(B' \otimes_{A} M,M).
\]
One can make this explicit: if $\epsilon(\varphi \otimes s) = \sum s_k \otimes \varphi_k$, then adjunction \eqref{adju} reads
\[
\varphi \otimes s \mapsto \left(P \mapsto \sum P(\varphi_k)s_k) \right)
 \quad \Leftrightarrow \quad 
P \mapsto (\varphi \mapsto \rho(P)(\varphi \otimes s)).
\]
In other words
\[
\epsilon(\varphi \otimes s) = \sum s_k \otimes \varphi_k  \quad \Leftrightarrow \quad  \rho(P)(\varphi \otimes s) = \sum P(\varphi_k)s_k.
\]
We now apply this to the case where $B$ (resp. $B'$) is equal to $A\langle \xi \rangle_{q}/I^{[n+1]}$ with its canonical $A$-module structure (resp. the $A$-bimodule structure given by the canonical structure on the left and the Taylor structure on the right).
In this situation, we have
\[
B' \otimes_{A} M = A\langle \xi \rangle_{q}/I^{[n+1]} \otimes'_{A} M
\]
and we write $\epsilon_n$ and $\rho_n$ instead of $\epsilon$ and $\rho$.
One easily sees that $\epsilon_n$ is $A\langle \xi \rangle_{q}/I^{[n+1]}$-linear if and only if $\rho_n$ is $A\langle \xi \rangle_{q}/I^{[n+1]}$-linear.
Therefore, an $A$-linear map
\[
\theta_{n} : M \to M \otimes_{A} A\langle \xi \rangle_{q}/I^{[n+1]}
\]
extends uniquely to an $A\langle \xi \rangle_{q}/I^{[n+1]}$-linear map
\[
\epsilon_{n} : A\langle \xi \rangle_{q}/I^{[n+1]} \otimes'_{A} M \to M \otimes_{A} A\langle \xi \rangle_{q}/I^{[n+1]},
\]
which in turn corresponds by adjunction to a unique $A\langle \xi \rangle_{q}/I^{[n+1]}$-linear map
\[
\xymatrix{
\mathrm{Diff}_{q,n}(A) \ar[r]^{\rho_n} \ar@{=}[d] & \mathrm{Diff}_{q,n}(M) \ar@{=}[d] \\ \mathrm{Hom}_{A}\left(A \langle \xi \rangle_{q}/I^{[n+1]} , A\right) \ar[r] & \mathrm{Hom}_{A}\left(A \langle \xi \rangle_{q}/I^{[n+1]} \otimes'_{A} M, M \right).}
\]
This shows that the compatible families $\{\theta_n\}_{n \in \mathbb N}$ of $A$-linear maps and $\{\rho_n\}_{n\in \mathbb N}$ of $A \langle \xi \rangle_{q}/I^{[n+1]} $-linear maps determine each other uniquely.
Moreover, since
\[
\epsilon_n(1 \otimes s) = \sum s_k \otimes \xi^{[k]}  \quad \Leftrightarrow \quad  \forall l \in \mathbb N, \rho_n(\widetilde \partial_q^l)(1\otimes s) = \sum \widetilde \partial_q^l(\xi^{[k]})s_k,
\]
we will have $s_k = \rho_n(\widetilde \partial_q^k)(1\otimes s)$.
In other words,
\begin{equation} \label{comprho}
\forall s \in M, \quad \theta_{n}(s) = \sum_{k=0}^n \rho_n( \widetilde{\partial_{q}^{k}} )(1 \otimes s) \otimes \xi^{[k]}
\end{equation}
and formula \eqref{Tayldiv} will therefore hold in the end.
Taking the limit on the family $\{\rho_n\}_{n \in \mathbb N}$, we obtain an $R$-linear morphism
\[
\rho : D_{A/R,q} \simeq \mathrm{Diff}_{q}(M) \to \mathrm{Diff}_{q}(M).
\]
Using formula \eqref{comprho}, condition \eqref{cond1} reads $\rho_0(1) = \mathrm{Id}_M$ and condition \eqref{cond2} reads
\[
 \sum_{k,l=0}^{n,m} \rho_{n}(\widetilde{\partial_{q}^{k}})(\rho_m(\widetilde{\partial_{q}^{l}})(1 \otimes s)) \otimes \xi^{[k]} \otimes \xi^{[l]} = \sum_{k,l=0}^{n,m} \rho_{n+m}(\widetilde{\partial_{q}^{k+l}}) (1 \otimes s) \otimes \xi^{[k]} \otimes \xi^{[l]}.
\]
We know from lemma \ref{goodbasis} that $\left\{\xi^{[k]} \otimes \xi^{[l]} \right \}_{k,l=0}^{n,m}$ is a basis for $A\langle\xi\rangle_{q}/I^{[n+1]} \otimes'_A A\langle\xi\rangle_{q}/I^{[m+1}$ and conditions \eqref{cond1} and \eqref{cond2} are therefore equivalent to $\rho$ being a morphism of rings.
We can therefore conclude with the help of lemma \ref{lift}.
\end{proof}

%%%%%%%%%%%%%%%%%%%%
One can also define a \emph{$q$-stratification (of level zero)} as a compatible family of $A\langle \xi \rangle_{q}/I^{[n+1]}$-linear \emph{isomorphisms}
\[
\epsilon_{n} : A\langle \xi \rangle_{q}/I^{[n+1]} \otimes'_{A} M \simeq M \otimes_{A} A\langle \xi \rangle_{q}/I^{[n+1]}
\]
satisfying a normalization and a cocycle condition of the same type as \eqref{cocyc}.
Clearly, a $q$-stratification induces a $q$-Taylor structure.
Moreover we see as in the proof of proposition \ref{taylD} that
\[
\epsilon_n(\varphi \otimes s) = \sum s_k \otimes \xi^{[k]}  \quad \Leftrightarrow \quad  \forall l \leq n,\widetilde \partial_{M,q}^l(\varphi \otimes s) = \sum \widetilde \partial_q^l(\xi^{[k]})s_k = s_l
\]
and it follows that
\[
\forall \varphi \in A\langle \xi \rangle_{q}/I^{[n+1]}, \forall s \in M, \quad \epsilon_n(\varphi \otimes s) = \sum_{k=0}^n \widetilde \partial_{M,q}^k(\varphi \otimes s) \otimes \xi^{[k]}.
\]
Said differently, we obtain the following improvement on \eqref{case1}:
\[
\widetilde {\partial}_{M,q}^n : A\langle\xi\rangle_{q}/I^{[n+1]} \otimes_A' M \overset{\epsilon_n} \longrightarrow M \otimes_A A\langle\xi\rangle_{q}/I^{[n+1]} \overset{\mathrm{Id} \otimes \widetilde \partial_{A,q}^n} \longrightarrow M.
\]
However, there is no flip map on $A\langle \xi \rangle_{q}/I^{[n+1]}$ (or on $A[\xi]/\xi^{(n+1)}$ either) because $\tau (I^{[n+1]}) \not\subset I^{[n+1]}$, and it follows that a $q$-Taylor structure will not extend to a $q$-stratification in general.
Actually, if we are given a $q$-stratification on an $A$-module $M$, then one can verify that
\[
\epsilon_1^{-1}(s \otimes \xi) \equiv \sum_{k=0}^N (-1)^k(q-1)^kx^k\xi \otimes \partial_q^k(s) \mod (q-1)^{N+1}.
\]
Clearly, if $M$ is a general $D_{A/R,q}$-module and $(q-1)$ is not nilpotent, then this sum could be infinite.
It is therefore necessary to consider the following classical topological condition:

%%%%%%%%%%%%%%%%%%%
\begin{dfn}
A $D_{A/R,q}$-module $M$ is \emph{topologically quasi-nilpotent} if
\[
\forall s \in M, \quad \lim_{k \to+\infty} \partial_{M,q}^k(s)=0.
\]
\end{dfn}

We may also say that the corresponding connection, $q$-Taylor structure, etc. is \emph{topologically quasi-nilpotent}.

Recall that we introduced the notion of hyper-$q$-stratification in definition \ref{hyperqpd}.
Then, we have:

%%%%%%%%%%%%%%%%%%%%%
\begin{prop} \label{bigequi}
If $M$ is a finitely presented flat $A$-module, then it is equivalent to give $M$ the structure of a topologically quasi-nilpotent $D_{A/R,q}$-module or to endow it with a hyper-$q$-stratification.
\end{prop}

\begin{proof}
Thanks to propositions \ref{taylD} and \ref{eqTay}, it is sufficient to show that a $q$-Taylor structure is equivalent to a hyper-$q$-Taylor structure.
By reduction modulo $I^{[n+1]}$ for all $n \in \mathbb N$, the latter will automatically induce the former.
Conversely, since $M$ is finitely presented over $A$, a $q$-Taylor structure on $M$ provides a \emph{formal $q$-Taylor map}
\[
\hat \theta : M \to \varprojlim(M \otimes_{A} A\langle \xi \rangle_{q}/I^{[n+1]} )\simeq M \otimes_{A} A\langle \langle \xi \rangle \rangle_{q}, \quad s \mapsto \sum_{k=0}^{+\infty} \partial_{M,q}^{k}(s) \otimes \xi^{[k]},
\]
where the last isomorphism results from \cite[\href{https://stacks.math.columbia.edu/tag/059K}{Tag 059K}]{stacks-project}
since we have an isomorphism of free $A$-modules $A\langle \xi \rangle_{q}/I^{[n+1]} \simeq \prod_{k=0}^n A\xi^{[k]}$.
Since we also assume that $M$ is flat, it is a direct summand of a finite free module (\cite[\href{https://stacks.math.columbia.edu/tag/00NX}{Tag 00NX}]{stacks-project}) and we have
\[
\sum_{k=0}^{+\infty} \partial_{M,q}^{k}(s) \otimes \xi^{[k]} \in M \otimes_{A} \widehat{A\langle \xi \rangle}_{q} \Leftrightarrow \lim_{k\to +\infty} \partial_{M,q}^{k}(s) = 0.
\]
Thus, we see that the formal $q$-Taylor map factors through $M \otimes_{A} \widehat{A\langle \xi \rangle}_{q}$ if and only if the $q$-Taylor structure is topologically quasi-nilpotent and it remains to check that the induced map $\theta : M \to M \otimes_{A} \widehat{ A\langle \xi \rangle}_{q}$ is a hyper-$q$-Taylor structure.
Since $(\mathrm{Id}_{M} \otimes e) \circ \theta = \mathrm{Id}_{M}$, we only have to check that
\[
(\theta \otimes \mathrm{Id}_{\widehat {A\langle\xi\rangle_{q}}}) \circ \theta = (\mathrm{Id}_{M} \otimes \Delta) \circ \theta : M \to M \otimes_{A} \widehat{A\langle \xi \rangle}_{q} \widehat \otimes_{A} \widehat{A\langle \xi \rangle}_{q}
\]
and it is sufficient to make sure that the canonical map
\[
M \otimes_{A} \widehat{A\langle\xi\rangle_{q}} \widehat\otimes'_{A} \widehat{A\langle\xi\rangle_{q}} \to M \otimes_{A} A \langle\langle \xi \rangle\rangle_{q} \widehat\otimes'_{A} A \langle\langle \xi \rangle\rangle_{q}
\]
is injective.
Since $M$ is flat, this follows from the fact that, in the following diagram, the bottom map is injective and the flip maps are bijective.
\[
\xymatrix{
\widehat{A\langle\xi\rangle_{q}} \widehat\otimes'_{A} \widehat{A\langle\xi\rangle_{q}} \ar[r] \ar[d]^{ \tau \otimes \mathrm{Id}}& A \langle\langle \xi \rangle\rangle_{q} \widehat\otimes'_{A} A \langle\langle \xi \rangle\rangle_{q} \ar[d]^{ \tau \otimes \mathrm{Id}}
\\
\widehat{A\langle\xi\rangle_{q}} \widehat\otimes_{A} \widehat{A\langle\xi\rangle_{q}} \ar[r] & A \langle\langle \xi \rangle\rangle_{q} \widehat\otimes_{A} A \langle\langle \xi \rangle\rangle_{q}
\\
\widehat \bigoplus_{i,j \in \mathbb N} A(\xi^{[i]} \otimes \xi^{[j]}) \ar@{^{(}->}[r] \ar[u]^\simeq & \prod_{i,j \in \mathbb N} A (\xi^{[i]} \otimes \xi^{[j]}). \ar[u]^\simeq
\qedhere\qed} 
\]
\end{proof}

%%%%%%%%%%%%%%%%%%%%%%%%%%%
\section{$q$-crystals}

All $\delta$-rings are assumed to live over the local ring $\mathbb Z[q]_{(p,q-1)}$ (with $q$ of rank one) and they are endowed with their $(p, q-1)$-adic topology.
We fix a morphism of bounded $q$-PD-pairs $(R, \mathfrak r) \to (A,\mathfrak a)$ with $A$ complete with fixed topologically \'etale coordinate $x$.
We also assume that $\mathfrak a$ is closed in $A$ so that $\overline A$ is also complete. 

The absolute (big) \emph{$q$-crystalline site} $q\mathrm{-CRIS}$ is the category opposite\footnote{We will always write morphisms of pairs in the usual way and call it an opposite morphism if we consider it as a morphism in the $q$-crystalline site.} to the category of complete \emph{bounded} \footnote{As in \cite{BhattScholze22}, we could 
add some other technical assumptions: see their definition 16.2.} $q$-PD-pairs $(B, J)$ (or equivalently complete $q$-PD-thickenings $B \twoheadrightarrow \overline B$ with $B$ bounded).
We may actually consider the slice category $q\mathrm{-CRIS}_{/R}$ over $(\widehat R, \widehat {\mathfrak r})$: an object is a morphism $(R, \mathfrak r) \to (B,J)$ to a complete bounded $q$-PD-pair (and a morphism is the opposite of a morphism of $q$-PD-pairs which is compatible with the structural maps).
Now, if we denote by $\mathrm{FSch}$ the category of $(p, q-1)$-adic formal schemes (over $\mathrm{Spf}\left(\mathbb Z_{p}[[q-1]]\right)$), then we may consider the functor 
\[
q\mathrm{-CRIS} \to \mathrm{FSch}, \quad (B, J) \mapsto \mathrm{Spf}(\overline B) \quad (\mathrm{with}\ \overline B = B/J).
\]
If $\mathcal X$ is a $(p,q-1)$-adic formal scheme, then the absolute $q$-crystalline site $q\mathrm{-CRIS}(\mathcal X)$ of $\mathcal X$ will be the corresponding fibered site over $\mathcal X$: an object is a pair made of a complete bounded $q$-PD-pair $ (B,J)$ and a morphism $\mathrm{Spf}(\overline B) \to \mathcal X$ of formal schemes.

We will actually mix both constructions and consider, if $\mathcal X$ is a $(p,q-1)$-adic formal $\overline R$-scheme, the $q$-crystalline site $q\mathrm{-CRIS}(\mathcal X/R)$ of $\mathcal X/R$: an object is a pair made of a morphism $(R, \mathfrak r) \to (B,J)$ to a complete bounded $q$-PD-pair together with a morphism $\mathrm{Spf}(\overline B) \to \mathcal X$ of formal $\overline R$-schemes.
We will write
\[
q\mathrm{-CRIS}(\overline A/R):= q\mathrm{-CRIS}( \mathrm{Spf}(\overline A)/R).
\]
For the moment, we endow the category $q\mathrm{-CRIS}(\mathcal X/R)$ with the \emph{coarse} topology, so that a sheaf is simply a presheaf, and we denote by $(\mathcal X/R)_{q\mathrm{-CRIS}}$ the corresponding topos\footnote{Unlike in \cite{BhattScholze22}, we follow the standard notations for sites and topos.}.
A sheaf (of sets) $E$ on $q\mathrm{-CRIS}(\mathcal X/R)$ is thus a family of sets $E_{B}$ together with a compatible family of maps $E_{B} \to E_{B'}$ for any morphism opposite to $(B,J) \to (B', J')$ in $q\mathrm{-CRIS}(\mathcal X/R)$.
In particular, we may consider the sheaf of rings $\mathcal O_{\mathcal X/R,q}$ that sends $B$ to itself (we will write $\mathcal O_{\overline A/R,q}$ when $\mathcal X = \mathrm{Spf}(\overline A)$ as above).
A sheaf of $\mathcal O_{\mathcal X/R,q}$-modules is a family of $B$-modules $E_{B}$ endowed with a compatible family of semilinear maps $E_{B} \to E_{B'}$, or better, of linear maps $B' \otimes_{B} E_{B} \to E_{B'}$, called the \emph{transition} maps.

%%%%%%%%%%%%%%%%%%%%%
\begin{dfn}
If $\mathcal X$ is a $(p,q-1)$-adic formal $\overline R$-scheme, then a \emph{$q$-crystal} on $\mathcal X/R$ is a sheaf of $\mathcal O_{\mathcal X/R,q}$-modules whose transition maps are all bijective.
\end{dfn}

%%%%%%%%%%%%%%%%%
\begin{rmks}
\begin{enumerate}
\item
An $\mathcal O_{\mathcal X/R,q}$-module $E$ is finitely presented if and only if it is a \emph{$q$-crystal} and all $E_{B}$'s are finitely presented $B$-modules.
\item
An $\mathcal O_{\mathcal X/R,q}$-module $E$ is flat if and only all $E_{B}$'s are flat $B$-modules.
\end{enumerate}
\end{rmks}

Since $x$ is a topologically \'etale coordinate on $A$, there exists a unique $\delta$-structure on $A$ such that $x$ has rank one (use lemma 2.18 of \cite{BhattScholze22} for example).
We endow the polynomial ring $A[\xi]$ with the \emph{symmetric} $\delta$-structure so that $x + \xi$ also has rank one.
The ring $P := A \otimes_{R} A$ comes with its tensor product $\delta$-structure and the canonical map $A[\xi] \to P$ is a morphism of $\delta$-rings.
It follows that $P$ has a Frobenius $\phi$, but, as in section \ref{deltwist}, if we set
\[
A' := R {}_{{}_{\phi}\nwarrow}\!\!\widehat\otimes_{R} A \quad \mathrm{and} \quad P' := A' \otimes_{R} A' = R {}_{{}_{\phi}\nwarrow}\!\!\otimes_{R} P,
\]
we could as well consider the relative Frobenius $F : P' \to P$ (which is a morphism of $\delta$-algebras) as we did in section 7 of \cite{GrosLeStumQuiros19}.

%%%%%%%%%%%%%%%%%%%%%%%
\begin{lem} \label{coinc}
If two morphisms of $\delta$-pairs $u_{1}, u_{2}: (P, I) \to (B,J)$ to a complete $q$-PD-pair coincide when restricted to $A[\xi]$, then they must be equal.
\end{lem}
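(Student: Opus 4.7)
The plan is to reduce the lemma to the universal property of the topologically étale coordinate $x$. The ring $P = A \otimes_{R} A$ is generated as an $R$-algebra by $p_{1}(A)$ and $p_{2}(A)$, so any ring morphism out of $P$ is determined by its restrictions along $p_{1}$ and $p_{2}$. The canonical map $A[\xi] \to P$ sends the subring $A \subset A[\xi]$ isomorphically onto $p_{1}(A)$ and sends $\xi$ to $p_{2}(x) - p_{1}(x)$. Thus the hypothesis that $u_{1}$ and $u_{2}$ coincide on $A[\xi]$ yields two pieces of information: first, $u_{1} \circ p_{1} = u_{2} \circ p_{1}$; and second, $u_{1}(\xi) = u_{2}(\xi)$, which combined with the first gives $u_{1}(p_{2}(x)) = u_{2}(p_{2}(x))$.

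It remains to deduce $u_{1} \circ p_{2} = u_{2} \circ p_{2}$ from the agreement of these two morphisms at $x$. The composites $v_{i} := u_{i} \circ p_{2} : A \to B$ are $R$-algebra morphisms; since $B$ is $(p, q-1)$-adically complete, and since any $R$-algebra morphism sends $(p, q-1)A$ into $(p, q-1)B$, both $v_{1}$ and $v_{2}$ are automatically continuous for the adic topologies. The hypothesis that $x$ is a topologically étale coordinate means precisely that $A$ is formally étale and topologically finitely presented over $R$ with $x$ as generator, so continuous $R$-algebra morphisms from $A$ into a complete $R$-algebra are uniquely determined by the image of $x$. Concretely, $v_{1}$ and $v_{2}$ agree on the image of $R[x]$, hence by continuity on the closure of that image, and then the formally étale lifting property forces them to agree on all of $A$.

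Combining $u_{1} \circ p_{1} = u_{2} \circ p_{1}$ with $u_{1} \circ p_{2} = u_{2} \circ p_{2}$ yields $u_{1} = u_{2}$ on all of $P$, as required. The only subtle step is the justification in the second paragraph that topological étaleness of $x$ really produces uniqueness of continuous morphisms, but this is standard once continuity has been ensured by the completeness of $B$. Notice that the argument uses nothing about the $\delta$-structures or $q$-PD-structures on either $P$ or $B$; only the topological ring structure and the existence of the étale coordinate are needed.
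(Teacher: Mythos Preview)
Your argument has a genuine gap at the crucial step. The assertion that ``the formally \'etale lifting property forces them to agree on all of $A$'' is a misreading of what formal \'etaleness provides. Formal \'etaleness of $A$ over $R[x]$ gives uniqueness of lifts along \emph{nilpotent} thickenings; it does \emph{not} say that two continuous $R[x]$-algebra maps $v_{1},v_{2}:A\to B$ into an arbitrary complete $R$-algebra must coincide. Concretely, \'etaleness of $A$ over $R[x]$ only ensures that the completed diagonal $\widehat Q := A\widehat\otimes_{R[x]}A$ splits as a product $A\times\widehat N$ of rings; the pair $(v_{1},v_{2})$ then induces a map $\widehat Q\to B$, and nothing so far prevents its restriction to the factor $\widehat N$ from being nonzero. (For instance, if $A=R\langle x\rangle\times R\langle x\rangle$, the two projections onto $R\langle x\rangle$ agree on $R[x]$ but are distinct.) Your closing remark that ``the argument uses nothing about the $\delta$-structures or $q$-PD-structures'' is therefore precisely the symptom of the problem: that structure is what the paper exploits.

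What you are missing is the observation that $v_{1}\equiv v_{2}\bmod J$. Indeed, for $f\in A$ one has $p_{2}(f)-p_{1}(f)\in I$, hence $v_{i}(f)\equiv u_{i}(p_{1}(f))\bmod J$; since $u_{1}\circ p_{1}=u_{2}\circ p_{1}$, this gives $v_{1}(f)-v_{2}(f)\in J$. Because $(B,J)$ is a $q$-PD-pair, every element of $J$ is topologically nilpotent (lemma~\ref{nilqpd}), so $J^{p}\subset(p,q-1)B$ and $B$ is $J$-adically separated. Now formal unramifiedness of $A$ over $R[x]$, applied inductively modulo powers of $J$, yields $v_{1}=v_{2}$. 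The paper packages the same idea differently: it shows $(P,I)\widehat\otimes_{(A[\xi],\xi)}(P,I)\simeq(\widehat M,\widehat M)\times(\widehat P,\widehat I)$ and then kills the first factor by observing that the complete $q$-PD-envelope of $(\widehat M,\widehat M)$ is zero---which is again the $q$-PD input. Either route works, but neither is purely topological.
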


\begin{proof}
The statement means that any morphism of $\delta$-pairs
\[
(P, I) \otimes_{(A[\xi], \xi)} (P,I) \to (B,J)
\]
will factor (necessarily uniquely) through the multiplication map
\[
(P, I) \otimes_{(A[\xi], \xi)} (P,I) \to (P, I).
\]
Since $x$ is a topologically \'etale coordinate on $A$, if we denote by $N$ the kernel of multiplication from $Q := A \otimes_{R[x]} A$ to $A$, then there exists a split exact sequence
\[
0 \to \widehat N \to \widehat Q \to A \to 0
\]
of \emph{rings} (see \cite[\href{https://stacks.math.columbia.edu/tag/02FL}{Tag 02FL}]{stacks-project} in the algebraic setting), and therefore also of $\delta$-rings thanks to lemma \ref{factdel}.
Therefore, letting $M := A \otimes_{R} N$, there exists a sequence of isomorphisms
\[
P \widehat \otimes_{A[\xi]} P \simeq A \widehat \otimes_{R} Q \simeq A \widehat \otimes_{R} (N \times A) \simeq \widehat M \times \widehat P.
\]
Actually, there even exists an isomorphism of exact sequences
\[
\xymatrix{
0 \ar[r] & P \widehat \otimes_{A[\xi]} I + I \widehat \otimes_{A[\xi]} P \ar[r] \ar[d]^{\simeq} & P \widehat \otimes_{A[\xi]} P \ar[r] \ar[d]^{\simeq} & A \ar[r]\ar[d]^{\simeq} & 0 \\
0 \ar[r] & \widehat M \times \widehat I \ar[r] & \widehat M \times \widehat P \ar[r] & A \ar[r]& 0,
}
\]
where the upper right map is total multiplication and the lower right map is composition of projection and multiplication.
In particular, there exists an isomorphism of $\delta$-pairs
\[
(P, I) \widehat \otimes_{(A[\xi], \xi)} (P,I) = (P \widehat \otimes_{A[\xi]} P, P \widehat \otimes I + I \widehat \otimes P) \simeq ( \widehat M \times \widehat P, \widehat M \times \widehat I) = ( \widehat M, \widehat M) \times ( \widehat P, \widehat I).
\]
Let us also note that there exists a commutative diagram
\[
\xymatrix{
( P, I) \widehat \otimes_{(A[\xi], \xi)} (P,I) \ar[r] \ar[d]^{\simeq} &(\widehat P, \widehat I) \ar@{=}[d]\\
( \widehat M,\widehat M) \times ( \widehat P, \widehat I) \ar[r] &(\widehat P,\widehat I),
}
\]
where the upper map is multiplication and the lower map is the projection.
To finish the proof, it is therefore sufficient to show that any morphism of $\delta$-pairs
\[
u : (\widehat M, \widehat M) \times (\widehat P, \widehat I) \to (B,J)
\]
factors through $( \widehat P, \widehat I)$.
But, for any such $u$, there exists a decomposition $u = u' \times u''$ with $u' : ( \widehat M, \widehat M) \to (B',J')$ and $u'' : ( \widehat P, \widehat I) \to (B'',J'')$ as a product of morphisms of $\delta$-rings (use lemma \ref{factdel} again) and $u'$ factors necessarily through the \emph{complete} $q$-PD-envelope of $( \widehat M, \widehat M)$, which is the zero ring (because we use completions).
\end{proof}

The morphism
\[
\widetilde \theta: (P, I) \to \left(\widehat {A\langle \xi \rangle_{q}}, \widehat I^{[1]}\right)
\]
obtained by linear extension from the $q$-Taylor map of definition \ref{qTay} is clearly a morphism of $\delta$-pairs and we can show that it is universal:

%%%%%%%%%%%%%%%%%%%%%%%
\begin{thm} \label{univpro}
The $q$-PD-pair $\left(\widehat {A\langle\xi\rangle_{q}}, \widehat I^{[1]}\right)$ is the complete $q$-PD-envelope of $(P, I)$.
\end{thm}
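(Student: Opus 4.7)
The plan is to reduce the universal property from the big pair $(P, I)$ to the polynomial pair $(A[\xi], (\xi))$ and then combine Theorem \ref{algun}, Proposition \ref{bound}, and Lemma \ref{coinc}. Given a morphism of $\delta$-pairs $u : (P, I) \to (B, J)$ to a complete bounded $q$-PD-pair, I would first restrict $u$ along the $\delta$-pair inclusion $(A[\xi], (\xi)) \hookrightarrow (P, I)$ sending $x$ to $p_{1}(x)$ and $\xi$ to $p_{2}(x) - p_{1}(x)$; since $p_{1}(x)$ and $p_{2}(x)$ both have rank one in $P$, the symmetric $\delta$-structure on $A[\xi]$ matches the one induced from $P$, so this is indeed a morphism of $\delta$-pairs.

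By Theorem \ref{algun}, the restriction $u' := u|_{A[\xi]}$ extends uniquely to a morphism of $q$-PD-pairs $v' : (A\langle\xi\rangle_{q}, I^{[1]}) \to (B, J)$. Since $A\langle\xi\rangle_{q}$ is free over $A$ and $A$ is bounded, a direct calculation on the basis $\{\xi^{[n]}\}_{n \in \mathbb N}$ shows that $A\langle\xi\rangle_{q}$ is itself bounded: it is $(p)_{q}$-torsion-free, and its quotient by $(p)_{q}$ inherits bounded $p^{\infty}$-torsion from $A/(p)_{q}$ term by term. Proposition \ref{bound} then identifies $(\widehat{A\langle\xi\rangle_{q}}, \widehat{I^{[1]}})$ as the complete $q$-PD-envelope of $(A[\xi], (\xi))$, so $v'$ extends uniquely to a morphism $\hat v : (\widehat{A\langle\xi\rangle_{q}}, \widehat{I^{[1]}}) \to (B, J)$ of complete $q$-PD-pairs.

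It remains to check that $\hat v \circ \widetilde\theta = u$ on $P$. Restricted to $A[\xi] \subset P$, the map $\widetilde\theta$ is simply the canonical inclusion, because $\widetilde\theta(p_{1}(x)) = x$ and $\widetilde\theta(p_{2}(x)) = \theta(x) = x + \xi$, whence $\widetilde\theta(\xi) = \xi$. Consequently $\hat v \circ \widetilde\theta$ and $u$ agree on $A[\xi]$, and Lemma \ref{coinc} forces them to coincide on all of $P$. Uniqueness of the factorization follows the same way: any competitor $w : \widehat{A\langle\xi\rangle_{q}} \to B$ restricts on $A[\xi]$ to the unique map determined by $u$, hence agrees with $v'$ on $A\langle\xi\rangle_{q}$ by Theorem \ref{algun}, and then with $\hat v$ by the uniqueness in Proposition \ref{bound}.

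The main obstacle is the gap between $A[\xi]$ and $P = A \otimes_{R} A$: away from the diagonal there is a priori an extra ``\'etale'' factor $\widehat M$ as in the decomposition \eqref{checkis} used in the proof of Lemma \ref{coinc}. The content of that lemma is precisely that for \emph{complete} $q$-PD-targets this factor is killed (it maps through the complete $q$-PD-envelope of a unit ideal, which is zero), and this is what legitimizes the reduction to $A[\xi]$.
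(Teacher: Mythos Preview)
Your argument is essentially identical to the paper's: restrict $u$ to $A[\xi]$, extend via Theorem \ref{algun} and Proposition \ref{bound} to $\hat v$ on $\widehat{A\langle\xi\rangle_{q}}$, and then invoke Lemma \ref{coinc} to force $\hat v \circ \widetilde\theta = u$; you have simply spelled out a few steps (the $\delta$-compatibility of $A[\xi] \hookrightarrow P$, the boundedness of $A\langle\xi\rangle_{q}$, and uniqueness) that the paper leaves implicit. One small nit: the complete $q$-PD-envelope is universal for maps to \emph{all} complete $q$-PD-pairs, not just bounded ones, so you should drop the word ``bounded'' from your target $(B,J)$ --- nothing in the argument uses it.
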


\begin{proof}
We give ourselves a map $u : (P, I) \to (B,J)$ to a complete $q$-PD-pair.
Thanks to theorem \ref{algun} and proposition \ref{bound}, the restriction of $u$ to $A[\xi]$ extends uniquely to a morphism $v : \left(\widehat {A\langle\xi\rangle_{q}}, \widehat I^{[1]}\right) \to (B,J)$.
Moreover, the diagram
\[
\xymatrix{
(A[\xi], \xi) \ar[r] \ar[d] & \left(\widehat {A\langle\xi\rangle_{q}}, \widehat I^{[1]}\right) \ar@{=}[d]\\
(P, I) \ar[r]^-{\widetilde \theta} & \left(\widehat {A\langle\xi\rangle_{q}}, \widehat I^{[1]}\right) 
}
\]
is commutative.
We may then apply lemma \ref{coinc} to the maps $u_{1} = u$ and
\[
\xymatrix{
u_{2} : \left( P, I\right) \ar[r]^-{{\widetilde \theta}} & \left(\widehat {A\langle\xi\rangle_{q}}, \widehat I^{[1]}\right) \ar[r]^-{v} & (B,J).
}\qedhere
\] 
\end{proof}

We will need a slight generalization of the theorem \ref{univpro} and we consider now
\[
P(r) := \underbrace{A \otimes_R \cdots \otimes_R A}_{r+1\ \mathrm{times}} \simeq \underbrace{P \otimes'_A \cdots \otimes'_A P}_{r\ \mathrm{times}}
\]
and denote by $I(r)$ the kernel of multiplication $P(r) \to A$.
We also define
\begin{align*} 
A\langle \xi_1, \ldots, \xi_r \rangle_{q}^\wedge &:= \widehat{A\langle \xi_1}\rangle_{q} \widehat \otimes_A' \cdots \widehat \otimes_A' \widehat{A\langle \xi_{r} \rangle}_{q}
\\ & \simeq \left(A\langle \xi_1, \ldots, \xi_{r-1} \rangle^\wedge_{q} \langle \xi_{r} \rangle_{q}\right)^\wedge.
\end{align*}

\begin{cor} \label{sevvar}
$A\langle \xi_1, \ldots, \xi_r \rangle_{q}^\wedge$ is the complete $q$-PD-envelope of $I(r)$ in $P(r)$.
\end{cor}

\begin{proof}
This is a direct consequence of lemma \ref{tensor}.
The only thing to check is that $A\langle \xi_1, \ldots, \xi_r \rangle_{q}^\wedge$ is bounded and this is shown by induction:
since $A\langle \xi_{r} \rangle_{q}$ is flat over $A$, the map
\[
A\langle \xi_1, \ldots, \xi_{r-1} \rangle_{q}^\wedge \to A\langle \xi_1, \ldots, \xi_{r-1} \rangle_{q}^\wedge \otimes'_A A\langle \xi_r \rangle_{q}
\]
is flat and we may then apply successively lemma \ref{bounded} and corollary \ref{compbound}.
\end{proof}

%%%%%%%%%%%%%%%
\begin{rmks}
\begin{enumerate}
\item
If $\mathfrak a$ is a $q$-PD-ideal in $A$, then $\overline {\mathfrak a}^{\mathrm {cl}} \oplus \widehat I^{[1]}$ is a $q$-PD-ideal in $\widehat {A\langle\xi\rangle_{q}}$ and $\left(\widehat {A\langle\xi\rangle_{q}}, \overline {\mathfrak a}^{\mathrm {cl}} \oplus \widehat I^{[1]} \right)$ is the complete $q$-PD-envelope of $(P, \mathfrak a \oplus I)$.
This follows from lemma \ref{tensor} applied to $(A,0) \to (A, \mathfrak a)$ and $(A,0) \to (P, I)$, or can be proved directly.
\item 
For the same reason, il follows from corollary \ref{sevvar} that
\[
\left(\widehat {A\langle\xi\rangle_{q}} \widehat \otimes_{A}' \widehat {A\langle\xi\rangle_{q}}, \overline {\mathfrak a}^{\mathrm {cl}} \oplus \widehat {A\langle\xi\rangle_{q}} \widehat \otimes'_{A} I^{[1]} + I^{[1]} \widehat \otimes'_{A} \widehat {A\langle\xi\rangle_{q}} \right)
\]
is the complete $q$-PD-envelope of $(P \otimes'_{A} P, \mathfrak a \oplus P \otimes'_{A} I + I \otimes'_{A} P )$.
\end{enumerate}
\end{rmks}

\begin{thm} \label{mainres}
There exists a functor $E \mapsto E_{A}$ from finitely presented flat $q$-crystals on $\overline A/R$ to finitely presented flat topologically quasi-nilpotent $D_{A/R,q}$-modules.
\end{thm}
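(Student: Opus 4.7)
The plan is to construct $E_{A}$ by evaluating the $q$-crystal $E$ at the ``diagonal'' objects of $q\mathrm{-CRIS}(\overline{A}/R)$ supplied by theorem \ref{univpro}, read off a hyper-$q$-stratification from the transition isomorphisms, and then feed this into proposition \ref{bigequi} to get a topologically quasi-nilpotent $D_{A/R,q}$-module.

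First I would set $E_{A} := E_{(A,\mathfrak{a})}$, which is finitely presented and flat by the assumption on $E$ and the remarks following the definition of $q$-crystal. By theorem \ref{univpro} together with the first remark after it, the pair $\left(\widehat{A\langle\xi\rangle_{q}}, \overline{\mathfrak{a}}^{\mathrm{cl}} \oplus \widehat{I^{[1]}}\right)$ is a complete bounded $q$-PD-pair with quotient $\overline{A}$, and hence an object of $q\mathrm{-CRIS}(\overline{A}/R)$. The two canonical morphisms $p_{1}, p_{2} : (A,\mathfrak{a}) \to \left(\widehat{A\langle\xi\rangle_{q}}, \overline{\mathfrak{a}}^{\mathrm{cl}} \oplus \widehat{I^{[1]}}\right)$, where $p_{2}$ comes from the $q$-Taylor map $\theta$ of definition \ref{qTay}, are morphisms in the site. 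The crystal property then yields transition \emph{iso}morphisms for both $p_{1}$ and $p_{2}$, whose composition produces a $\widehat{A\langle\xi\rangle_{q}}$-linear isomorphism
\[
\epsilon : \widehat{A\langle\xi\rangle_{q}} \otimes'_{A} E_{A} \ \simeq\ E_{\widehat{A\langle\xi\rangle_{q}}} \ \simeq\ E_{A} \otimes_{A} \widehat{A\langle\xi\rangle_{q}}.
\]

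To show that $\epsilon$ is a hyper-$q$-stratification in the sense of definition \ref{hyperqpd}, I would verify the normalization and cocycle conditions of \eqref{cocyc} by functoriality. Normalization $e^{*}(\epsilon) = \mathrm{Id}_{E_{A}}$ follows from applying the crystal structure to the augmentation $e$, which is a morphism of $q$-PD-pairs back to $(A,\mathfrak{a})$ by assertion (1) of proposition \ref{groupoid}. The cocycle condition requires that the triple object $\left(\widehat{A\langle\xi\rangle_{q}} \widehat{\otimes}'_{A}\widehat{A\langle\xi\rangle_{q}}, \ldots\right)$ be itself a complete bounded $q$-PD-pair living in $q\mathrm{-CRIS}(\overline{A}/R)$; this is exactly the content of the second remark after theorem \ref{univpro}. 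The three morphisms $p_{1}, p_{2}, \Delta$ of proposition \ref{groupoid} then induce, via the crystal property of $E$, the three pullbacks $p_{1}^{*}\epsilon, p_{2}^{*}\epsilon, \Delta^{*}\epsilon$, whose comparison (governed by the groupoid identities of proposition \ref{groupoid}, particularly assertion \eqref{assoc}) delivers $\Delta^{*}(\epsilon) = p_{1}^{*}(\epsilon) \circ p_{2}^{*}(\epsilon)$.

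Having a hyper-$q$-stratification on a finitely presented flat $A$-module, I would invoke proposition \ref{bigequi} to convert it into a topologically quasi-nilpotent $D_{A/R,q}$-module structure on $E_{A}$; topological quasi-nilpotency is automatic here because the resulting Taylor expansion $\theta(s) = \sum_{k} \partial_{q}^{k}(s) \otimes \xi^{[k]}$ lies by construction in the $(p,q-1)$-adically complete module $E_{A} \otimes_{A} \widehat{A\langle\xi\rangle_{q}}$, which forces $\partial_{q}^{k}(s) \to 0$. Functoriality in $E$ is immediate: a morphism of $q$-crystals gives an $A$-linear map on evaluations that intertwines the two transition isomorphisms and hence $\epsilon$.

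The main obstacle is the cocycle verification, since it requires translating the site-theoretic compatibility of the crystal $E$ (a statement about arbitrary morphisms of $q$-PD-pairs) into the concrete coalgebra identities on $\widehat{A\langle\xi\rangle_{q}}$ packaged in proposition \ref{groupoid}. In particular, one must check that $\Delta$ of proposition \ref{diagprop} genuinely coincides with the morphism of $q$-PD-pairs induced by the multiplication map $P \otimes'_{A} P \to P$ on complete $q$-PD-envelopes, which is where theorem \ref{univpro} and its second remark are essential. The only minor nuisances are to keep the flatness/finite-presentation hypotheses stable throughout the construction (they are, since $E_{A}$ itself is such by assumption) and to make sure that the asymmetric convention $\otimes'$ on the right-hand side of $\epsilon$ matches the one used in definition \ref{hyperqpd}.
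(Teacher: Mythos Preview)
Your approach is correct and is essentially the one the paper takes: evaluate at $(A,\mathfrak a)$, compose the two transition isomorphisms coming from the canonical and Taylor maps $A \rightrightarrows \widehat{A\langle\xi\rangle_{q}}$ to obtain $\epsilon$, and then invoke proposition \ref{bigequi}. The paper condenses all of this into a single line (``obvious functor \ldots obtained by composing the transition maps''), whereas you have spelled out why $\epsilon$ really satisfies the normalization and cocycle conditions using the groupoid identities of proposition \ref{groupoid} and the remarks after theorem \ref{univpro}; this is precisely the content that makes the word ``obvious'' honest. One small remark: the cocycle condition is not governed by the associativity diagram \eqref{assoc} but rather by assertion \eqref{commu} (compatibility of $\Delta$ with $p_{1},p_{2}$ and $\theta$), together with the functoriality of the transition isomorphisms of $E$ along the three morphisms $(A,\mathfrak a) \to \bigl(\widehat{A\langle\xi\rangle_{q}}\,\widehat\otimes'_{A}\,\widehat{A\langle\xi\rangle_{q}},\ldots\bigr)$; the associativity diagram would only enter if one wanted a higher cocycle.
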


\begin{proof}
There exists an obvious functor from $q$-crystals on $\overline A/R$ to hyper-$q$-stratified modules on $A/R$ obtained by composing the transition maps
\[
\widehat {A\langle\xi\rangle_{q}} \otimes'_{A} E_{A} \simeq E_{\widehat {A\langle\xi\rangle_{q}}} \simeq E_{A} \otimes_{A} \widehat {A\langle\xi\rangle_{q}}.
\]
If $E$ is flat finitely presented, then $E_{A}$ may be seen by proposition \ref{bigequi} as a finitely presented flat topologically quasi-nilpotent $D_{A/R,q}$-module.
\end{proof}

%%%%%%%%%%%%%
\begin{rmks}
\begin{enumerate}
\item
In a forthcoming article, we will explain what happens when we endow the $q$-crystalline site with the flat topology.
We are confident that $(A, \mathfrak a)$ will then cover $q\mathrm{-CRIS}(\overline A/R)$ for this topology and, as a consequence, the functor in theorem \ref{mainres} will be then an equivalence.
Note that this result could also be deduced from the recent work of Chatzistamatiou in \cite{Chatzistamatiou20} and partly recovered from the preprint \cite{MorrowTsuji20} of Morrow and Tsuji.
\item
There should also exist a comparison theorem in cohomology. One approach to try to prove such a result would be to investigate more closely the relation between the \v Cech-Alexander complexes used in \cite{BhattScholze22}, 16.13, and our constructions.
A second possibility would be to develop a theory of linearization of $q$-differential operators.
\end{enumerate}
\end{rmks}

%%%%%%%%%%%%%%%%%%%%%%%%%%%%%%%%%%
\addcontentsline{toc}{section}{References}
\printbibliography

\Addresses

\end{document}